\documentclass[reqno]{amsart}
\usepackage{amsmath, amsthm, amssymb, amstext}

\usepackage[left=3.8cm,right=3.8cm,top=3cm,bottom=3cm,includeheadfoot]{geometry}
\usepackage{hyperref,xcolor}
\hypersetup{
pdfborder={0 0 0},
colorlinks,
}
\usepackage{enumitem}
\setlength{\parindent}{1.2em}

\usepackage{todonotes}

\newtheorem{theorem}{Theorem}
\newtheorem{remark}[theorem]{Remark}

\newtheorem{proposition}[theorem]{Proposition}



              %
              %
              %
\DeclareMathOperator*{\divergenz}{div}              %
\DeclareMathOperator*{\Ss}{S}

\newcommand{\N}{\mathbb{N}}

\newcommand{\R}{\mathbb{R}}

\newcommand{\Lp}[1]{L^{#1}(\Omega)}

\newcommand{\Lprand}[1]{L^{#1}(\partial\Omega)}
\newcommand{\Wp}[1]{W^{1,#1}(\Omega)}

\newcommand{\Wpzero}[1]{W^{1,#1}_0(\Omega)}

\newcommand{\lan}{\langle}
\newcommand{\ran}{\rangle}
\newcommand{\eps}{\varepsilon}
\newcommand{\ph}{\varphi}

\newcommand{\into}{\int_{\Omega}}

\newcommand{\weak}{\rightharpoonup}

\newcommand{\Linf}{L^{\infty}(\Omega)}
\newcommand{\close}{\overline{\Omega}}

\newcommand{\cprime}{$'$}

\renewcommand{\l}{\left}
\renewcommand{\r}{\right}
\numberwithin{theorem}{section}
\numberwithin{equation}{section}


\title[Sign changing solution for a double phase problem]{Sign changing solution for a double phase problem with nonlinear boundary condition via the Nehari manifold}

\author[L.\,Gasi\'nski]{Leszek Gasi\'nski}
\address[L.\,Gasi\'nski]{Pedagogical University of Cracow, Department of Mathematics, Podchorazych 2, 30-084 Cracow, Poland}
\email{leszek.gasinski@up.krakow.pl}

\author[P.\,Winkert]{Patrick Winkert}
\address[P.\,Winkert]{Technische Universit\"{a}t Berlin, Institut f\"{u}r Mathematik, Stra\ss e des 17.\,Juni 136, 10623 Berlin, Germany}
\email{winkert@math.tu-berlin.de}

\subjclass[2010]{35J15, 35J62, 35J92, 35P30}
\keywords{Boundedness of solutions, Double phase problems, existence results, multiple solutions, Nehari manifold}

\begin{document}

\begin{abstract}
    In this paper we study quasilinear elliptic equations driven by the so-called double phase operator and with a nonlinear boundary condition. Due to the lack of regularity, we prove the existence of multiple solutions by applying the Nehari manifold method along with truncation and comparison techniques and critical point theory. In addition, we can also determine the sign of the solutions (one positive, one negative, one nodal). Moreover, as a result of independent interest, we prove for a general class of such problems the boundedness of weak solutions.
\end{abstract}
	
\maketitle
	
\section{Introduction}

Given a bounded domain $\Omega \subseteq \R^N$, $N \geq 2$, with Lipschitz boundary $\partial \Omega$, we study the following double phase problem with nonlinear boundary condition
\begin{equation}\label{problem}
    \begin{aligned}
	-\divergenz\left(|\nabla u|^{p-2}\nabla u+\mu(x) |\nabla u|^{q-2}\nabla u\right) & =f(x,u)-|u|^{p-2}u-\mu(x)|u|^{q-2}u\quad && \text{in } \Omega,\\
	\left(|\nabla u|^{p-2}\nabla u+\mu(x) |\nabla u|^{q-2}\nabla u\right) \cdot \nu & = g(x,u) &&\text{on } \partial \Omega,
    \end{aligned}
\end{equation}
where $\nu(x)$ denotes the outer unit normal of $\Omega$ at $x\in \partial \Omega$, $1<p<q<N$, $\mu \in\Linf$ such that $\mu(x) \geq 0$ for almost all (a.\,a.) $x\in\Omega$ and $f\colon\Omega\times\R\to \R$, $g\colon\partial\Omega\times\R\to\R$ are Carath\'{e}odory functions which have $(p-1)$-superlinear growth near $\pm\infty$.

The differential operator in \eqref{problem} is the so-called double phase operator and is given by
\begin{align}\label{operator_double_phase}
    -\divergenz \l(|\nabla u|^{p-2} \nabla u+ \mu(x) |\nabla u|^{q-2} \nabla u\r)\quad \text{for }u\in \Wp{\mathcal{H}}
\end{align}
with an appropriate Musielak-Orlicz Sobolev space $\Wp{\mathcal{H}}$, see its definition in Section \ref{section_2}.
Special cases of \eqref{operator_double_phase}, studied extensively in the literature, occur when $\inf_{\close} \mu>0$ (the weighted $(q,p)$-Laplacian) or when $\mu\equiv 0$ (the $p$-Laplace differential operator).
The operator \eqref{operator_double_phase} is related to the energy functional
\begin{align}\label{integral_minimizer}
   u \mapsto \int_\Omega \big(|\nabla  u|^p+\mu(x)|\nabla  u|^q\big)\,dx,
\end{align}
where the integrand $H(x,\xi)=|\xi|^p+\mu(x)|\xi|^q$ for all $(x,\xi) \in \Omega\times \R^N$ has unbalanced growth, that is,
\begin{align*}
    |\xi|^p \leq H(x,\xi) \leq b \l(1+|\xi|^q\r)\quad \text{for a.\,a.\,}x\in\Omega \text{ and for all }\xi\in\R^N,
\end{align*}
with $b>0$. The integral functional \eqref{integral_minimizer} is characterized by the fact that the energy density changes its ellipticity and growth properties according to the point in the domain. More precisely, its behavior depends on the values of the weight function $\mu(\cdot)$. Indeed, on the set $\{x\in \Omega: \mu(x)=0\}$ it will be controlled by the gradient of order $p$ and in the case $\{x\in \Omega: \mu(x) \neq 0\}$ it is the gradient of order $q$. This is the reason why it is called double phase.

Originally, Zhikov \cite{Zhikov-1986} was the first who studied so-called double phase functionals of the form \eqref{integral_minimizer}  in order to describe models of strongly anisotropic materials, see also Zhikov \cite{Zhikov-1995}, \cite{Zhikov-1997} and the monograph of Zhikov-Kozlov-Oleinik \cite{Zhikov-Kozlov-Oleinik-1994}.
Functionals like \eqref{integral_minimizer} have been studied by several authors with respect to regularity and nonstandard growth. We refer to the works of Baroni-Colombo-Mingione \cite{Baroni-Colombo-Mingione-2015}, \cite{Baroni-Colombo-Mingione-2016}, \cite{Baroni-Colombo-Mingione-2018}, Baroni-Kuusi-Mingione \cite{Baroni-Kuusi-Mingione-2015}, Cupini-Marcellini-Mascolo \cite{Cupini-Marcellini-Mascolo-2015}, Co\-lom\-bo-Mingione \cite{Colombo-Mingione-2015a}, \cite{Colombo-Mingione-2015b},
 Marcellini \cite{Marcellini-1989}, \cite{Marcellini-1991}, Ok \cite{Ok-2018}, \cite{Ok-2020}, Ragusa-Tachikawa \cite{Ragusa-Tachikawa-2020} and the references therein. We also mention the recent works of Beck-Mingione \cite{Beck-Mingione-2020}, \cite{Beck-Mingione-2019} concerning nonuniformly elliptic variational problems.

In general, double phase differential operators and corresponding energy functionals given in \eqref{operator_double_phase} and \eqref{integral_minimizer}, respectively, appear in several physical applications. For example, in the elasticity theory, the modulating coefficient $\mu(\cdot)$ dictates the geometry of composites made of two different materials with distinct power hardening exponents $q$ and $p$, see Zhikov \cite{Zhikov-2011}.
We also refer to other applications which can be found in the works of
Bahrouni-R\u{a}dulescu-Repov\v{s} \cite{Bahrouni-Radulescu-Repovs-2019} on transonic flows, Benci-D'Avenia-Fortunato-Pisani \cite{Benci-DAvenia-Fortunato-Pisani-2000} on quantum physics and  Cherfils-Il\cprime yasov \cite{Cherfils-Ilyasov-2005} on reaction diffusion systems.

The aim of our paper is to prove multiplicity results for problems of the form \eqref{problem} where the nonlinearities are supposed to be $(p-1)$-superlinear at $\pm\infty$. Due to the lack of regularity for problems \eqref{problem}, several tools, which are usually applied in the theory of multiplicity results based on the regularity results of Lieberman \cite{Lieberman-1988} and Pucci-Serrin \cite{Pucci-Serrin-2007}, cannot be used in our treatment. Instead we will make use of the so-called Nehari manifold which was first introduced by Nehari in the works \cite{Nehari-1961}, \cite{Nehari-1960}. This method developed into a very powerful tool in order to find solutions (especially, sign-changing solutions) via critical point theory. The idea in this method is the following: Let $X$ be a real Banach space and let $J \in C^1(X,\R)$ be a functional. If $u \neq 0$ is a critical point of $J$, then $u$ belongs to the set
\begin{align*}
    \mathcal{N} =\Big\{u \in X\setminus\{0\} \,:\, \lan J'(u),u\ran=0 \Big\},
\end{align*}
where $\lan\cdot,\cdot\ran$ is the duality paring between $X$ and its dual space $X^*$. Therefore, $\mathcal{N}$ is an appropriate constraint for finding nontrivial critical points of $J$. Although $\mathcal{N}$ may not be a manifold in general, it is called Nehari manifold. So, we are looking for nontrivial minimizers of the functional $J$ in a subset of the whole space which contains the nontrivial critical points of $J$, namely $\mathcal{N}$.  We refer to the book chapter of Szulkin-Weth \cite{Szulkin-Weth-2010} which describes the method very well. Although there is no regularity theory for double phase problems, we are also going to prove a boundedness result for weak solutions of \eqref{problem} by using Moser's iteration which can be seen as a starting point in order to obtain the smoothness of the solutions.

A pioneer work for multiplicity results with superlinear nonlinearities was published by Wang \cite{Wang-1991} for semilinear Dirichlet problems driven by the Laplacian. Although double phase problems have been known for a while, existence results have only been obtained by few authors. Perera-Squassina \cite{Perera-Squassina-2018} showed the existence of a solution of problem \eqref{problem} with Dirichlet boundary condition by applying Morse theory where they used a cohomological local splitting to get an estimate of the critical groups at zero. The corresponding eigenvalue problem of the double phase operator with Dirichlet boundary condition has been studied by Colasuonno-Squassina \cite{Colasuonno-Squassina-2016} who proved the existence and properties of related variational eigenvalues. By applying variational methods, Liu-Dai \cite{Liu-Dai-2018} treated double phase problems and proved existence and multiplicity results, as well as sign-changing solutions. A similar treatment has been recently done by Gasi\'nski-Papageorgiou \cite[Proposition 3.4]{Gasinski-Papageorgiou-2019} via the Nehari manifold method for locally Lipschitz continuous right-hand sides. Furthermore, we refer to a recent work of the authors \cite{Gasinski-Winkert-2020b} in which the existence of at least one solution for Dirichlet double phase problems with convection is shown by applying the surjectivity result for pseudomonotone operators. This can be realized by an easy condition on the convection term, in addition to the usual growth condition. For multiple constant sign solutions we refer to another work of the authors in \cite{Gasinski-Winkert-2020a}. To the best of our knowledge this is the first work dealing with a double phase phenomenon along with a nonlinear boundary condition.

Finally, we mention recent papers which are very close to our topic dealing with certain types of double phase problems. We refer to Bahrouni-R\u{a}dulescu-Winkert \cite{Bahrouni-Radulescu-Winkert-2019}, Cencelj-R\u{a}dulescu-Repov\v{s} \cite{Cencelj-Radulescu-Repovs-2018}, Marino-Winkert \cite{Marino-Winkert-2020}, Papageorgiou-R\u{a}dulescu-Repov\v{s} \cite{Papageorgiou-Radulescu-Repovs-2019b}, \cite{Papageorgiou-Radulescu-Repovs-2018}, \cite{Papageorgiou-Radulescu-Repovs-2020}, R\u{a}dulescu \cite{Radulescu-2019}, Zhang-R\u{a}dulescu \cite{Zhang-Radulescu-2018}, Zeng-Gasi\'nski-Winkert-Bai  \cite{Zeng-Gasinski-Winkert-Bai-2020}, \cite{Zeng-Bai-Gasinski-Winkert-2020} and the references therein.

The paper is organized as follows. In Section \ref{section_2} we recall the definition of the Musielak-Orlicz space $\Lp{\mathcal{H}}$ and its corresponding Musielak-Orlicz Sobolev space $\Wp{\mathcal{H}}$ and we give some embedding results dealing with boundary Lebesgue spaces following the work of Colasuonno-Squassina \cite{Colasuonno-Squassina-2016}.
In Section \ref{section_3} we present a boundedness result for a more general class of problems than \eqref{problem} following the treatment of Marino-Winkert \cite{Marino-Winkert-2019}, see Theorem \ref{theorem_apriori}. In Section \ref{section_4} we state the full assumptions on the data of problem \eqref{problem}, give the definition of the weak solution and state and prove our existence result concerning constant sign solutions, see Proposition \ref{proposition_constant_sign_solutions}. Finally, in Section \ref{section_5}, we prove the existence of a sign-changing solution by applying the Nehari manifold method described above and state our full multiplicity result, see Theorem \ref{theorem_main_result}.

\section{Preliminaries}\label{section_2}

In this section we give some definitions and results which will be needed later. We denote by $\Lp{r}$ and $L^r(\Omega;\R^N)$ the usual Lebesgue spaces endowed with the norm $\|\cdot\|_r$ for $1\leq r<\infty$ and by $\Wp{r}$ and $\Wpzero{r}$ we identify the corresponding Sobolev spaces equipped with the norms $\|\cdot\|_{1,r}$ and $\|\cdot\|_{1,r,0}$, respectively, for $1< r<\infty$. From the Sobolev embedding theorem it is clear that we have the embedding
\begin{align*}
    \Wp{r} \to \Lp{\hat{r}},
\end{align*}
which is compact for any $\hat{r}<r^*$ and continuous for $\hat{r}=r^*$, where $r^*$ is the critical exponent of $r$ defined by
\begin{align}\label{critical_exponent_domain}
    r^*=
    \begin{cases}
	\frac{Nr}{N-r} &\text{if }r<N,\\
	\text{any }\ell\in(r,\infty) & \text{if }r\geq N.
    \end{cases}
\end{align}

Furthermore, we consider the $(N-1)$-dimensional Hausdorff (surface) measure $\sigma$ on the boundary $\partial \Omega$ of $\Omega$. Based on this, we can introduce in the usual way the boundary Lebesgue space $\Lprand{r}$ with norm $\|\cdot\|_{r,\partial\Omega}$. It is well-known that there exists a unique continuous linear operator $\gamma\colon W^{1,r}(\Omega) \to \Lprand{\tilde{r}}$ with $\tilde{r}\leq r_*$, called trace map, such that
\begin{align*}
    \gamma (u)=u \big|_{\partial \Omega} \quad \text{for all }u\in W^{1,p}(\Omega) \cap C^0(\close).
\end{align*}
Here, $r_*$ is the critical exponent of $r$ on the boundary given by
\begin{align}\label{critical_exponent_boundary}
    r_*=
    \begin{cases}
	\frac{(N-1)r}{N-r} &\text{if }r<N,\\
	\text{any }\ell\in(r,\infty) & \text{if }r\geq N.
    \end{cases}
\end{align}
By the trace embedding theorem we know that $\gamma$ is compact for any $\tilde{r}<r_*$. So, we understand all restrictions of Sobolev functions to $\partial \Omega$ in the sense of traces. For simplification we will avoid the notation of the trace operator in this paper.

In the entire paper we assume that
\begin{align}\label{condition_poincare}
    1<p<q<N, \qquad  \frac{Nq}{N+q-1}<p, \qquad \mu \in\Linf,\, \mu(x) \geq 0 \text{ for a.\,a.\,} x\in\Omega.
\end{align}
\begin{remark}
	Note that the second inequality in \eqref{condition_poincare} is equivalent to the inequality $q<p_*$ and so $q<p^*$ is true as well. Hence, we have the compactness of $\Wp{\mathcal{H}}$ into suitable Lebesgue spaces defined on the domain and also on the boundary, see Proposition \ref{proposition_embeddings} below. We point out that \eqref{condition_poincare} is different from the usual conditions when dealing with Dirichlet double phase problems, see, for example, \cite{Gasinski-Winkert-2020a} and \cite{Gasinski-Winkert-2020b} of the authors. Indeed, in these papers it is supposed that 
	\begin{align}\label{condition_poincare2}
		1<p<q<N, \qquad  \frac{q}{p}<1+\frac{1}{N}, \qquad 0 \leq \mu(\cdot)\in C^{0,1}(\close).
	\end{align}
	Condition \eqref{condition_poincare2} was used for the first time by Baroni-Colombo-Mingione \cite[see (1.8)]{Baroni-Colombo-Mingione-2015} in order to obtain regularity results of local minimizers for double phase integrals, see also the related works \cite{Baroni-Colombo-Mingione-2016} and \cite{Baroni-Colombo-Mingione-2018} of the same authors and Colombo-Mingione \cite{Colombo-Mingione-2015a}, \cite{Colombo-Mingione-2015b}. The meaning of \eqref{condition_poincare2} is twofold. On the one hand, from \eqref{condition_poincare2}, we know that smooth functions are dense in the Musielak-Orlicz Sobolev space $\Wp{\mathcal{H}}$, see, for example, Colasuonno-Squassina \cite[Proposition 6.5]{Colasuonno-Squassina-2016} or Harjulehto-H\"{a}st\"{o} \cite[Theorem 6.4.7 and Section 7.2]{Harjulehto-Hasto-2019}. On the other hand, \eqref{condition_poincare2} is required to have an equivalent norm on the space $\Wpzero{\mathcal{H}}$, see Colasuonno-Squassina \cite[Proposition 2.18(iv)]{Colasuonno-Squassina-2016}. Since we do not need both arguments in our work, we suppose the conditions stated in \eqref{condition_poincare}. As far as we know there is no relationship between
	\begin{align*}
		\frac{Nq}{N+q-1}<p\quad\text{and}\quad \frac{q}{p}<1+\frac{1}{N},
	\end{align*}
	only that both inequalities imply that $q<p^*$.
\end{remark}
Now, let $\mathcal{H}\colon \Omega \times [0,\infty)\to [0,\infty)$ be the function
\begin{align*}
    (x,t)\mapsto t^p+\mu(x)t^q.
\end{align*}
We set
\begin{align}\label{modular}
    \rho_{\mathcal{H}}(u):=\into \mathcal{H}(x,|u|)\,dx=\into \big(|u|^{p}+\mu(x)|u|^q\big)\,dx.
\end{align}
Based on the definition of the modular function $\rho_{\mathcal{H}}$ we are now in the position to introduce the so-called Musielak-Orlicz space $L^\mathcal{H}(\Omega)$ which is defined by
\begin{align*}
    L^\mathcal{H}(\Omega)=\left \{u ~ \Big | ~ u\colon \Omega \to \R \text{ is measurable and } \rho_{\mathcal{H}}(u)<+\infty \right \}
\end{align*}
equipped with the Luxemburg norm
\begin{align*}
    \|u\|_{\mathcal{H}} = \inf \left \{ \tau >0 : \rho_{\mathcal{H}}\left(\frac{u}{\tau}\right) \leq 1  \right \}.
\end{align*}
From Colasuonno-Squassina \cite[Proposition 2.14]{Colasuonno-Squassina-2016} we know that the space $L^\mathcal{H}(\Omega)$ is uniformly convex and so reflexive. Furthermore, we introduce the seminormed space
\begin{align*}
    L^q_\mu(\Omega)=\left \{u ~ \Big | ~ u\colon \Omega \to \R \text{ is measurable and } \into \mu(x) |u|^q dx< +\infty \right \}
\end{align*}
endowed with the seminorm
\begin{align*}
    \|u\|_{q,\mu} = \left(\into \mu(x) |u|^q dx \right)^{\frac{1}{q}}.
\end{align*}
In the same way we define the space $L^q_\mu(\Omega;\R^N)$. By $W^{1,\mathcal{H}}(\Omega)$ we denote the corresponding Musielak-Orlicz Sobolev space which is defined by
\begin{align*}
    W^{1,\mathcal{H}}(\Omega)= \left \{u \in L^\mathcal{H}(\Omega) : |\nabla u| \in L^{\mathcal{H}}(\Omega) \right\}
\end{align*}
equipped with the norm
\begin{align*}
    \|u\|_{1,\mathcal{H}}= \|\nabla u \|_{\mathcal{H}}+\|u\|_{\mathcal{H}},
\end{align*}
where $\|\nabla u\|_\mathcal{H}=\|\,|\nabla u|\,\|_{\mathcal{H}}$. Since $W^{1,\mathcal{H}}(\Omega)$ is uniformly convex, see again Colasuonno-Squassina \cite[Proposition 2.14]{Colasuonno-Squassina-2016}, we know that it is a reflexive Banach space.

We have the following embedding results for the spaces $\Lp{\mathcal{H}}$ and $\Wp{\mathcal{H}}$.

\begin{proposition}\label{proposition_embeddings}
    Let \eqref{condition_poincare} be satisfied and let
    \begin{align}\label{critical_exponents}
	p^*:=\frac{Np}{N-p}
	\quad\text{and}\quad
	p_*:=\frac{(N-1)p}{N-p}
    \end{align}
    be the critical exponents to $p$, see \eqref{critical_exponent_domain} and \eqref{critical_exponent_boundary} for $r=p$. Then the following embeddings hold:
    \begin{enumerate}
	\item[(i)]
		$\Lp{\mathcal{H}} \hookrightarrow \Lp{r}$ and $\Wp{\mathcal{H}}\hookrightarrow \Wp{r}$ are continuous for all $r\in [1,p]$;
	\item[(ii)]
		$\Wp{\mathcal{H}} \hookrightarrow \Lp{r}$ is continuous for all $r \in [1,p^*]$;
	\item[(iii)]
		$\Wp{\mathcal{H}} \hookrightarrow \Lp{r}$ is compact for all $r \in [1,p^*)$;
	\item[(iv)]
		$\Wp{\mathcal{H}} \hookrightarrow \Lprand{r}$ is continuous for all $r \in [1,p_*]$;
	\item[(v)]
		$\Wp{\mathcal{H}} \hookrightarrow \Lprand{r}$ is compact for all $r \in [1,p_*)$;
	\item[(vi)]
		$\Lp{\mathcal{H}} \hookrightarrow L^q_\mu(\Omega)$ is continuous;
	\item[(vii)]
		$\Lp{q} \hookrightarrow \Lp{\mathcal{H}}$ is continuous.
    \end{enumerate}
\end{proposition}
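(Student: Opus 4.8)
The plan is to reduce all seven assertions to the classical Sobolev and trace embeddings for $\Wp{p}$, using two elementary inequalities between the modular $\rho_{\mathcal{H}}$ and the norms $\|\cdot\|_p$ and $\|\cdot\|_{q,\mu}$, together with the boundedness of $\Omega$ and the finiteness of the surface measure $\sigma$. In other words, once I know that $\Wp{\mathcal{H}}$ sits continuously inside $\Wp{p}$, everything else is bookkeeping.

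First I would record the trivial pointwise bounds $\mathcal{H}(x,t)\geq t^p$ and $\mathcal{H}(x,t)\geq \mu(x)t^q$ for $x\in\Omega$, $t\geq 0$, which on the level of \eqref{modular} give $\rho_{\mathcal{H}}(u)\geq\|u\|_p^p$ and $\rho_{\mathcal{H}}(u)\geq\|u\|_{q,\mu}^q$. Combining this with the standard fact (see Colasuonno-Squassina \cite{Colasuonno-Squassina-2016}) that $\rho_{\mathcal{H}}(u/\|u\|_{\mathcal{H}})\leq 1$ for $u\neq 0$ -- which holds because $\tau\mapsto\rho_{\mathcal{H}}(u/\tau)$ is continuous and nonincreasing on $(0,\infty)$, so the infimum defining the Luxemburg norm is attained -- I would deduce at once
\begin{align*}
	\|u\|_p \leq \|u\|_{\mathcal{H}}, \qquad \|u\|_{q,\mu}\leq\|u\|_{\mathcal{H}} \qquad\text{for all }u\in\Lp{\mathcal{H}}.
\end{align*}
The second estimate is precisely (vi). The first shows $\Lp{\mathcal{H}}\hookrightarrow\Lp{p}$ is continuous, and since $\Omega$ is bounded, $\Lp{p}\hookrightarrow\Lp{r}$ is continuous for every $r\in[1,p]$; applying the first estimate to $u$ and to $|\nabla u|$ separately yields $\|u\|_{1,r}\leq C\|u\|_{1,\mathcal{H}}$, which is (i). In particular, $\Wp{\mathcal{H}}\hookrightarrow\Wp{p}$ is continuous, and this is the bridge to the classical theory.

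Next I would simply compose this continuous embedding with the classical ones. For (ii) and (iii): the Sobolev embedding gives $\Wp{p}\hookrightarrow\Lp{p^*}$ continuously and $\Wp{p}\hookrightarrow\Lp{r}$ compactly for $r<p^*$; composing with $\Wp{\mathcal{H}}\hookrightarrow\Wp{p}$ and, in the continuous case, with $\Lp{p^*}\hookrightarrow\Lp{r}$ for $r\leq p^*$ (bounded $\Omega$ again) yields (ii) and (iii). For (iv) and (v) the identical argument applies to the trace operator $\gamma\colon\Wp{p}\to\Lprand{p_*}$, which is continuous and compact into $\Lprand{r}$ for $r<p_*$, together with $\Lprand{p_*}\hookrightarrow\Lprand{r}$ for $r\leq p_*$ since $\sigma(\partial\Omega)<\infty$. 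The only point to keep in mind is that the composition of a bounded linear operator with a compact one is compact, which is exactly what makes the compactness statements (iii) and (v) go through. Finally, (vii) is a direct modular computation: for $u\in\Lp{q}$ with $u\neq 0$, boundedness of $\Omega$ gives $\|u\|_p\leq c_1\|u\|_q$ and $\mu\in\Linf$ gives $\|u\|_{q,\mu}^q\leq\|\mu\|_\infty\|u\|_q^q$, hence for any $C\geq 1$,
\begin{align*}
	\rho_{\mathcal{H}}\left(\frac{u}{C\|u\|_q}\right) = C^{-p}\frac{\|u\|_p^p}{\|u\|_q^p}+C^{-q}\frac{\|u\|_{q,\mu}^q}{\|u\|_q^q} \leq C^{-p}\left(c_1^p+\|\mu\|_\infty\right),
\end{align*}
using $C^{-q}\leq C^{-p}$; taking $C=\max\{1,(c_1^p+\|\mu\|_\infty)^{1/p}\}$ makes the right-hand side $\leq 1$, so $\|u\|_{\mathcal{H}}\leq C\|u\|_q$.

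I do not anticipate a genuine obstacle here: the result is essentially a bookkeeping exercise once the two modular inequalities are in place. The only mildly delicate points are the passage between the modular and the Luxemburg norm (one must invoke that the defining infimum is attained, equivalently that the modular unit ball is closed) and the composition argument for compactness. Let me also note that the condition $\tfrac{Nq}{N+q-1}<p$ from \eqref{condition_poincare} is not actually needed for the embeddings exactly as stated above; it enters only through the resulting inequality $q<p_*$ (hence $q<p^*$), which is what makes the compact embeddings useful for Lebesgue exponents up to $q$ in the later sections.
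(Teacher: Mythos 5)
Your proof is correct and follows essentially the same route as the paper, which simply cites Colasuonno--Squassina for the modular estimates giving $\Lp{\mathcal{H}}\hookrightarrow\Lp{p}$, $\Lp{\mathcal{H}}\hookrightarrow L^q_\mu(\Omega)$ and $\Lp{q}\hookrightarrow\Lp{\mathcal{H}}$, and then composes with the classical Sobolev and trace embeddings for $\Wp{p}$; you have merely written out the details of those citations. Your closing observation that the condition $\tfrac{Nq}{N+q-1}<p$ is not needed for the embeddings themselves but only to place $q$ below $p_*$ is also accurate and consistent with the paper's own remark.
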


For the continuity of the embedding $\Lp{\mathcal{H}} \hookrightarrow \Lp{r}$ we refer to Colasuonno-Squassina \cite[Propositions 2.3 and 2.15]{Colasuonno-Squassina-2016} while (ii)--(v) follow from the classical Sobolev embedding theorem and the trace embedding result. The statements (vi) and (vii) can be also found in Colasuonno-Squassina \cite[Propositions 2.15 (iv) and (v)]{Colasuonno-Squassina-2016}.

The norm $\|\cdot\|_{\mathcal{H}}$ and the modular function $\rho_\mathcal{H}$ are related as follows, see Liu-Dai \cite[Proposition 2.1]{Liu-Dai-2018}.

\begin{proposition}\label{proposition_modular_properties}
    Let \eqref{condition_poincare} be satisfied and let $\rho_{\mathcal{H}}$ be defined by \eqref{modular}.
    \begin{enumerate}
	\item[(i)]
	    If $y\neq 0$, then $\|y\|_{\mathcal{H}}=\lambda$ if and only if $ \rho_{\mathcal{H}}(\frac{y}{\lambda})=1$;
	\item[(ii)]
	    $\|y\|_{\mathcal{H}}<1$ (resp.\,$>1$, $=1$) if and only if $ \rho_{\mathcal{H}}(y)<1$ (resp.\,$>1$, $=1$);
	\item[(iii)]
	    If $\|y\|_{\mathcal{H}}<1$, then $\|y\|_{\mathcal{H}}^q\leqslant \rho_{\mathcal{H}}(y)\leqslant\|y\|_{\mathcal{H}}^p$;
	\item[(iv)]
	    If $\|y\|_{\mathcal{H}}>1$, then $\|y\|_{\mathcal{H}}^p\leqslant \rho_{\mathcal{H}}(y)\leqslant\|y\|_{\mathcal{H}}^q$;
	\item[(v)]
	    $\|y\|_{\mathcal{H}}\to 0$ if and only if $ \rho_{\mathcal{H}}(y)\to 0$;
	\item[(vi)]
	    $\|y\|_{\mathcal{H}}\to +\infty$ if and only if $ \rho_{\mathcal{H}}(y)\to +\infty$.
    \end{enumerate}
\end{proposition}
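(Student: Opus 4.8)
The plan is to reduce every assertion to the exact scaling behaviour of the modular. For $y\in\Lp{\mathcal{H}}$ I set $A:=\into|y|^p\,dx$ and $B:=\into\mu(x)|y|^q\,dx$; both are finite because $\rho_{\mathcal{H}}(y)=A+B<\infty$ by the very definition of $\Lp{\mathcal{H}}$. Since $\tau>0$ one has $|\tau y|^p=\tau^p|y|^p$ and $|\tau y|^q=\tau^q|y|^q$, hence
\begin{align*}
\rho_{\mathcal{H}}(\tau y)=\tau^p A+\tau^q B
\qquad\text{and}\qquad
\rho_{\mathcal{H}}\!\left(\tfrac{y}{\tau}\right)=\tau^{-p}A+\tau^{-q}B .
\end{align*}
For $y\neq 0$, i.e.\ $A+B>0$, the first map is continuous and strictly increasing on $(0,\infty)$ with limits $0$ as $\tau\to0^+$ and $+\infty$ as $\tau\to+\infty$, and the second is continuous and strictly decreasing with limits $+\infty$ and $0$; this is the only analytic input needed, and it uses $0<p<q$ together with $A,B\geq 0$.

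I would prove (i) first. Strict monotonicity plus the two limits of $\tau\mapsto\rho_{\mathcal{H}}(y/\tau)$ give a unique $\lambda\in(0,\infty)$ with $\rho_{\mathcal{H}}(y/\lambda)=1$, and moreover $\{\tau>0:\rho_{\mathcal{H}}(y/\tau)\leq 1\}=[\lambda,\infty)$; consequently the infimum defining the Luxemburg norm equals $\lambda$ and is attained, which is exactly the equivalence $\|y\|_{\mathcal{H}}=\lambda\iff\rho_{\mathcal{H}}(y/\lambda)=1$. Part (ii) is then immediate: for $y\neq 0$ put $\lambda=\|y\|_{\mathcal{H}}$, so $\rho_{\mathcal{H}}(y/\lambda)=1$, and evaluating the strictly increasing map $t\mapsto\rho_{\mathcal{H}}(t\cdot\tfrac{y}{\lambda})$ at $t=\lambda$ shows $\rho_{\mathcal{H}}(y)<1$, $=1$ or $>1$ precisely when $\lambda<1$, $=1$ or $>1$; as each side splits into three exhaustive, mutually exclusive cases, all three equivalences follow at once, the case $y=0$ being trivial.

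For (iii) and (iv) I would use the normalization $\lambda^{-p}A+\lambda^{-q}B=1$ with $\lambda=\|y\|_{\mathcal{H}}$ coming from (i). If $0<\lambda<1$ then $\lambda^{-q}\geq\lambda^{-p}\geq 1$, so replacing $\lambda^{-q}$ by $\lambda^{-p}$ in the identity gives $1\geq\lambda^{-p}(A+B)$, i.e.\ $\rho_{\mathcal{H}}(y)\leq\lambda^p$, while replacing $\lambda^{-p}$ by $\lambda^{-q}$ gives $1\leq\lambda^{-q}(A+B)$, i.e.\ $\rho_{\mathcal{H}}(y)\geq\lambda^q$; this is (iii). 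If $\lambda>1$ then $\lambda^{-q}\leq\lambda^{-p}\leq 1$ and the two substitutions run the other way, producing $\lambda^p\leq\rho_{\mathcal{H}}(y)\leq\lambda^q$, which is (iv). Finally (v) and (vi) drop out of (ii)--(iv): if $\rho_{\mathcal{H}}(y_n)\to 0$ then eventually $\rho_{\mathcal{H}}(y_n)<1$, hence $\|y_n\|_{\mathcal{H}}<1$ by (ii), hence $\|y_n\|_{\mathcal{H}}\leq\rho_{\mathcal{H}}(y_n)^{1/q}\to 0$ by (iii); conversely $\|y_n\|_{\mathcal{H}}\to 0$ forces $\|y_n\|_{\mathcal{H}}<1$ eventually, so $\rho_{\mathcal{H}}(y_n)\leq\|y_n\|_{\mathcal{H}}^p\to 0$. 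Statement (vi) is the mirror image, with the roles of $p$ and $q$ interchanged and ``$>1$'' in place of ``$<1$''.

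I do not expect a genuine obstacle. The only point requiring a touch of care is the attainment of the infimum in (i) --- this is where continuity and strict monotonicity of $\tau\mapsto\rho_{\mathcal{H}}(y/\tau)$, together with the finiteness $\rho_{\mathcal{H}}(y)<\infty$ built into the definition of $\Lp{\mathcal{H}}$, are essential. Once (i) is in place, (ii)--(vi) are pure bookkeeping with the inequalities $1<p<q$ and the trichotomy of $\|y\|_{\mathcal{H}}$ against $1$.
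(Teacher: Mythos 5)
Your proof is correct; the paper does not prove this proposition itself but simply cites Liu--Dai [Proposition 2.1], and your argument -- exploiting the exact scaling $\rho_{\mathcal{H}}(\tau y)=\tau^p A+\tau^q B$, the strict monotonicity and limits of this map to establish (i), and then deriving (ii)--(vi) from the normalization $\lambda^{-p}A+\lambda^{-q}B=1$ -- is precisely the standard argument used there. No gaps: the attainment of the Luxemburg infimum and the trivial $y=0$ cases are handled appropriately.
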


For $u \in \Wp{\mathcal{H}}$ let
\begin{align}\label{modular2}
    \hat{\rho}_\mathcal{H}(u) =\into \l(|\nabla u|^{p}+\mu(x)|\nabla u|^q\r)\,dx+\into \l(|u|^{p}+\mu(x)|u|^q\r)\,dx.
\end{align}

Following the proof of Liu-Dai \cite[Proposition 2.1]{Liu-Dai-2018} we have a similar result for the norm $\|\cdot\|_{1,\mathcal{H}}$ and the modular function $\hat{\rho}_\mathcal{H}$.

\begin{proposition}\label{proposition_modular_properties2}
    Let \eqref{condition_poincare} be satisfied and let $\hat{\rho}_{\mathcal{H}}$ be defined by \eqref{modular2}.
    \begin{enumerate}
	\item[(i)]
	    If $y\neq 0$, then $\|y\|_{1,\mathcal{H}}=\lambda$ if and only if $ \hat{\rho}_{\mathcal{H}}(\frac{y}{\lambda})=1$;
	\item[(ii)]
	    $\|y\|_{1,\mathcal{H}}<1$ (resp.\,$>1$, $=1$) if and only if $ \hat{\rho}_{\mathcal{H}}(y)<1$ (resp.\,$>1$, $=1$);
	\item[(iii)]
	    If $\|y\|_{1,\mathcal{H}}<1$, then $\|y\|_{1,\mathcal{H}}^q\leqslant \hat{\rho}_{\mathcal{H}}(y)\leqslant\|y\|_{1,\mathcal{H}}^p$;
	\item[(iv)]
	    If $\|y\|_{1,\mathcal{H}}>1$, then $\|y\|_{1,\mathcal{H}}^p\leqslant \hat{\rho}_{\mathcal{H}}(y)\leqslant\|y\|_{1,\mathcal{H}}^q$;
	\item[(v)]
	    $\|y\|_{1,\mathcal{H}}\to 0$ if and only if $ \hat{\rho}_{\mathcal{H}}(y)\to 0$;
	\item[(vi)]
	    $\|y\|_{1,\mathcal{H}}\to +\infty$ if and only if $ \hat{\rho}_{\mathcal{H}}(y)\to +\infty$.
    \end{enumerate}
\end{proposition}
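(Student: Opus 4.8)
The plan is to copy, essentially verbatim, the proof of Proposition~\ref{proposition_modular_properties} (equivalently, of Liu--Dai \cite[Proposition~2.1]{Liu-Dai-2018}), replacing throughout $\rho_{\mathcal H}$, $\Lp{\mathcal H}$, $\|\cdot\|_{\mathcal H}$ by $\hat\rho_{\mathcal H}$, $\Wp{\mathcal H}$, $\|\cdot\|_{1,\mathcal H}$; this works because $\hat\rho_{\mathcal H}$ carries exactly the structural features of $\rho_{\mathcal H}$ on which that argument relies. Concretely, for fixed $u\in\Wp{\mathcal H}$ one has the identity $\hat\rho_{\mathcal H}(\lambda u)=\lambda^p\,a(u)+\lambda^q\,b(u)$ for all $\lambda\ge 0$, where $a(u):=\into\big(|\nabla u|^p+|u|^p\big)\,dx$ and $b(u):=\into\mu(x)\big(|\nabla u|^q+|u|^q\big)\,dx$ are both nonnegative and finite, this finiteness being precisely the membership $u,\,|\nabla u|\in\Lp{\mathcal H}$, and with $a(u)>0$ if and only if $u\neq 0$. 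Hence $\lambda\mapsto\hat\rho_{\mathcal H}(\lambda u)$ is continuous on $[0,\infty)$, vanishes at $\lambda=0$, is strictly increasing when $u\neq 0$, and tends to $+\infty$ as $\lambda\to+\infty$, so it is an increasing homeomorphism of $[0,\infty)$ onto itself.

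With this in hand I would argue as follows (the case $u=0$ being trivial, since all quantities vanish). For (i): for $u\neq 0$ the map $\tau\mapsto\hat\rho_{\mathcal H}(u/\tau)$ is continuous and strictly decreasing on $(0,\infty)$ with limit $+\infty$ as $\tau\to 0^+$ and $0$ as $\tau\to+\infty$, so its sublevel set $\{\tau>0:\hat\rho_{\mathcal H}(u/\tau)\le 1\}$ is a closed half-line $[\tau_0,+\infty)$ with $\hat\rho_{\mathcal H}(u/\tau_0)=1$; since $\|u\|_{1,\mathcal H}=\inf\{\tau>0:\hat\rho_{\mathcal H}(u/\tau)\le 1\}$ is the left endpoint of that half-line, this gives $\|u\|_{1,\mathcal H}=\tau_0$, which is the asserted equivalence. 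Part (ii) then follows by testing $\tau=1$ against $\tau_0$ and using the monotonicity of $\tau\mapsto\hat\rho_{\mathcal H}(u/\tau)$. For (iii) and (iv): set $\lambda:=\|u\|_{1,\mathcal H}$, so by (i) $\hat\rho_{\mathcal H}(u/\lambda)=1$, i.e. $\lambda^{-p}a(u)+\lambda^{-q}b(u)=1$; writing $A:=\lambda^{-p}a(u)\ge 0$ and $B:=\lambda^{-q}b(u)\ge 0$ we have $A+B=1$ and $\hat\rho_{\mathcal H}(u)=a(u)+b(u)=\lambda^pA+\lambda^qB$. If $\lambda<1$, then $\lambda^q\le\lambda^p$ and hence $\lambda^q=\lambda^q(A+B)\le\lambda^pA+\lambda^qB\le\lambda^p(A+B)=\lambda^p$, which is (iii); if $\lambda>1$ the inequalities reverse and yield (iv). Finally, (v) follows from (ii) and (iii) — once $\|u\|_{1,\mathcal H}<1$ the bound $\|u\|_{1,\mathcal H}^q\le\hat\rho_{\mathcal H}(u)\le\|u\|_{1,\mathcal H}^p$ forces norm and modular to tend to $0$ together — and (vi) follows likewise from (ii) and (iv).

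There is essentially no obstacle here: the whole proposition reduces to the structural identity $\hat\rho_{\mathcal H}(\lambda u)=\lambda^p a(u)+\lambda^q b(u)$ together with elementary manipulation of the exponents $p<q$. The only point worth stating with care is (i), namely that the infimum defining the Luxemburg norm is attained and equals the unique root of $\tau\mapsto\hat\rho_{\mathcal H}(u/\tau)-1$; but this is immediate from the continuity and strict monotonicity recorded above, which come for free from the explicit form of $\hat\rho_{\mathcal H}(\lambda u)$ and from $a(u)>0$ for $u\neq 0$. Everything else is identical, up to obvious changes, to the proof of Proposition~\ref{proposition_modular_properties}.
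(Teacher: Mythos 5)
Your strategy --- reduce everything to the identity $\hat{\rho}_{\mathcal{H}}(\lambda u)=\lambda^{p}a(u)+\lambda^{q}b(u)$ and then rerun the Liu--Dai argument --- is exactly the one the paper intends: the paper gives no written proof beyond the remark that one should follow Liu--Dai \cite[Proposition 2.1]{Liu-Dai-2018}, and your monotonicity/continuity analysis of $\lambda\mapsto\hat{\rho}_{\mathcal{H}}(\lambda u)$ together with the convex-combination computation for (iii)--(iv) and the deduction of (v)--(vi) is correct and complete for the norm you are actually using.

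There is, however, one step that conflicts with the paper's definitions: you take for granted that $\|u\|_{1,\mathcal{H}}=\inf\{\tau>0:\hat{\rho}_{\mathcal{H}}(u/\tau)\le 1\}$, i.e.\ that $\|\cdot\|_{1,\mathcal{H}}$ is the Luxemburg norm generated by the combined modular $\hat{\rho}_{\mathcal{H}}$. Section 2 instead defines $\|u\|_{1,\mathcal{H}}=\|\nabla u\|_{\mathcal{H}}+\|u\|_{\mathcal{H}}$, a sum of two Luxemburg norms; this is equivalent to, but not equal to, the norm your proof addresses. Concretely, for $\mu\equiv 0$ and $p=2$ one has $\hat{\rho}_{\mathcal{H}}(u)=\|\nabla u\|_{2}^{2}+\|u\|_{2}^{2}$, so a function with $\|\nabla u\|_{2}=\|u\|_{2}=\tfrac{1}{2}$ satisfies $\|u\|_{1,\mathcal{H}}=1$ while $\hat{\rho}_{\mathcal{H}}(u)=\tfrac{1}{2}$, so the exact equivalences in (i)--(ii) (and the precise bounds in (iii)--(iv)) cannot hold for the sum norm. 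What you have proved is therefore the proposition for the equivalent modular-induced norm. This imprecision is inherited from the statement itself (and appears in several papers of this period; later work repairs it by explicitly adopting $\inf\{\tau>0:\hat{\rho}_{\mathcal{H}}(u/\tau)\le 1\}$ as the norm on $\Wp{\mathcal{H}}$), but your write-up should either state that you are replacing $\|\cdot\|_{1,\mathcal{H}}$ by this equivalent norm, or weaken (i)--(iv) to the two-sided estimates that survive the passage to the sum norm; as written, the identification is asserted without justification and is false for the paper's stated definition. Everything else in your argument is sound.
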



We denote by $\lan\cdot,\cdot\ran_{\mathcal{H}}$ the duality pairing between $\Wp{\mathcal{H}}$ and its dual space $\Wp{\mathcal{H}}^*$ and consider the nonlinear operator $A\colon \Wp{\mathcal{H}}\to \Wp{\mathcal{H}}^*$ which is defined by
\begin{align}\label{operator_representation}
    \langle A(u),\ph\rangle_{\mathcal{H}} :=\into \left(|\nabla u|^{p-2}\nabla u+\mu(x)|\nabla u|^{q-2}\nabla u \right)\cdot\nabla\ph \,dx\quad\text{for all } u,\ph\in\Wp{\mathcal{H}}.
\end{align}
The properties of the operator $A\colon \Wp{\mathcal{H}}\to \Wp{\mathcal{H}}^*$ are stated in the following proposition, see Liu-Dai \cite{Liu-Dai-2018}.

\begin{proposition}
    The operator $A$ defined by \eqref{operator_representation} is bounded (that is, it maps bounded sets to bounded sets), continuous, strictly monotone (hence maximal monotone) and it is of type $(\Ss)_+$, that is,
    \begin{align*}
	u_n \weak u \text{ in }\Wp{\mathcal{H}}\quad\text{and}\quad \limsup_{n\to\infty} \langle A(u_n),u_n-u\rangle \leq 0,
    \end{align*}
    imply $u_n\to u$ in $\Wp{\mathcal{H}}$.
\end{proposition}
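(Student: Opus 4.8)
The plan is to verify the four properties in turn, the $(\Ss)_+$ property being the one that needs genuine work while the others are routine.

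For \emph{boundedness}, given $u,\ph\in\Wp{\mathcal{H}}$ I would split
\[
|\lan A(u),\ph\ran_{\mathcal{H}}|\le\into|\nabla u|^{p-1}|\nabla\ph|\,dx+\into\mu(x)|\nabla u|^{q-1}|\nabla\ph|\,dx,
\]
apply Hölder's inequality in $L^p(\Omega;\R^N)$ to the first integral and, after writing $\mu|\nabla u|^{q-1}|\nabla\ph|=(\mu^{1/q'}|\nabla u|^{q-1})(\mu^{1/q}|\nabla\ph|)$, Hölder's inequality for the seminormed space $L^q_\mu(\Omega;\R^N)$ to the second, which gives
\[
|\lan A(u),\ph\ran_{\mathcal{H}}|\le\|\nabla u\|_p^{p-1}\|\nabla\ph\|_p+\|\nabla u\|_{q,\mu}^{q-1}\|\nabla\ph\|_{q,\mu}.
\]
Bounding $\|\nabla u\|_p$ by $\|u\|_{1,\mathcal{H}}$ via Proposition \ref{proposition_embeddings}(i) and $\|\nabla u\|_{q,\mu}$ by $\|u\|_{1,\mathcal{H}}$ via Proposition \ref{proposition_embeddings}(vi) together with Propositions \ref{proposition_modular_properties} and \ref{proposition_modular_properties2} yields $\|A(u)\|_{\Wp{\mathcal{H}}^*}\le c(\|u\|_{1,\mathcal{H}}^{p-1}+\|u\|_{1,\mathcal{H}}^{q-1})$, hence boundedness.

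For \emph{continuity}, suppose $u_n\to u$ in $\Wp{\mathcal{H}}$. By Proposition \ref{proposition_modular_properties2}(v) the modular $\hat\rho_{\mathcal{H}}(u_n-u)$ tends to $0$, so $\nabla u_n\to\nabla u$ in $L^p(\Omega;\R^N)$ and $\|\nabla(u_n-u)\|_{q,\mu}\to0$. Along a subsequence $\nabla u_n\to\nabla u$ a.e.\ in $\Omega$ and is dominated in $L^p(\Omega;\R^N)$, resp.\ $L^q_\mu(\Omega;\R^N)$; hence, by the continuity of $\xi\mapsto|\xi|^{r-2}\xi$ for $r\in\{p,q\}$ and the dominated convergence theorem, $|\nabla u_n|^{p-2}\nabla u_n\to|\nabla u|^{p-2}\nabla u$ in $L^{p'}(\Omega;\R^N)$ and $\mu^{1/q'}|\nabla u_n|^{q-2}\nabla u_n\to\mu^{1/q'}|\nabla u|^{q-2}\nabla u$ in $L^{q'}(\Omega;\R^N)$. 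A Hölder estimate as above then gives $A(u_n)\to A(u)$ in $\Wp{\mathcal{H}}^*$ along this subsequence, and the standard subsequence argument upgrades this to convergence of the full sequence.

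\emph{Monotonicity} is immediate: both integrands in
\[
\lan A(u)-A(v),u-v\ran_{\mathcal{H}}=\into\big(|\nabla u|^{p-2}\nabla u-|\nabla v|^{p-2}\nabla v\big)\cdot\nabla(u-v)\,dx+\into\mu(x)\big(|\nabla u|^{q-2}\nabla u-|\nabla v|^{q-2}\nabla v\big)\cdot\nabla(u-v)\,dx
\]
are nonnegative by the elementary vector inequality $(|\xi|^{r-2}\xi-|\eta|^{r-2}\eta)\cdot(\xi-\eta)\ge0$, and since equality there forces $\xi=\eta$, this gives strict monotonicity, hence maximal monotonicity ($A$ being continuous and everywhere defined). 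Finally, for the $(\Ss)_+$ property, let $u_n\weak u$ in $\Wp{\mathcal{H}}$ with $\lims\lan A(u_n),u_n-u\ran_{\mathcal{H}}\le0$. Writing $\lan A(u_n),u_n-u\ran_{\mathcal{H}}=\lan A(u_n)-A(u),u_n-u\ran_{\mathcal{H}}+\lan A(u),u_n-u\ran_{\mathcal{H}}$ and noting the last term vanishes by weak convergence, monotonicity forces $\lan A(u_n)-A(u),u_n-u\ran_{\mathcal{H}}\to0$, so both its $p$-part and its $\mu$-weighted $q$-part tend to $0$. The Simon-type inequalities (the coercive bound $(|\xi|^{r-2}\xi-|\eta|^{r-2}\eta)\cdot(\xi-\eta)\ge c_r|\xi-\eta|^r$ for $r\ge2$, the Hölder-trick bound $|\xi-\eta|^r\le c_r[(|\xi|^{r-2}\xi-|\eta|^{r-2}\eta)\cdot(\xi-\eta)]^{r/2}(|\xi|^r+|\eta|^r)^{(2-r)/2}$ for $1<r<2$, with the $q$-part multiplied pointwise by $\mu(x)\ge0$) then give $\nabla u_n\to\nabla u$ in $L^p(\Omega;\R^N)$ and $\|\nabla(u_n-u)\|_{q,\mu}\to0$, hence $\rho_{\mathcal{H}}(\nabla(u_n-u))\to0$ and thus $\|\nabla(u_n-u)\|_{\mathcal{H}}\to0$ by Proposition \ref{proposition_modular_properties}(v). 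On the other hand, $\Wp{\mathcal{H}}\hookrightarrow\Lp{q}\hookrightarrow\Lp{\mathcal{H}}$ is compact by Proposition \ref{proposition_embeddings}(iii),(vii) (recall $q<p^*$ under \eqref{condition_poincare}), so $u_n\to u$ in $\Lp{\mathcal{H}}$; adding the two convergences yields $u_n\to u$ in $\Wp{\mathcal{H}}$. The main obstacle is precisely this last step: extracting strong gradient convergence from the vanishing of the monotone-difference pairing via the Simon inequalities, tracking the degenerate weight $\mu$ in the $q$-term, and then passing from convergence of the two modular components back to convergence in the Luxemburg norm $\|\cdot\|_{\mathcal{H}}$.
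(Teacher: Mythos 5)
The paper itself offers no proof of this proposition --- it is quoted from Liu--Dai \cite{Liu-Dai-2018}, where the operator acts on $\Wpzero{\mathcal{H}}$ --- so your argument is compared against the standard one. For boundedness, continuity, monotonicity, maximal monotonicity and the $(\Ss)_+$ property it is correct and is essentially the expected proof: the H\"older splitting into the $p$-part and the $\mu$-weighted $q$-part, the Nemytskii/dominated-convergence argument along subsequences for continuity, and, for $(\Ss)_+$, the reduction to $\lan A(u_n)-A(u),u_n-u\ran_{\mathcal{H}}\to 0$ followed by the Simon inequalities (in both regimes $r\geq 2$ and $1<r<2$, with the $q$-inequality multiplied by $\mu(x)\geq 0$). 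You are also right that the pairing only controls the gradient part of the norm, so that $u_n\to u$ in $\Lp{\mathcal{H}}$ must be recovered from the compact embedding $\Wp{\mathcal{H}}\hookrightarrow\Lp{q}\hookrightarrow\Lp{\mathcal{H}}$, available since $q<p^*$ under \eqref{condition_poincare}; this is the step that distinguishes the Neumann-type space from the Dirichlet one.

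The one claim that does not follow from your argument is strict monotonicity. Equality in the elementary vector inequality only forces $\nabla u=\nabla v$ a.e., and on $\Wp{\mathcal{H}}$ (no vanishing trace) this yields $u-v$ constant, not $u=v$. In fact $A(u)=A(u+c)$ for every constant $c$, so
\begin{align*}
	\lan A(u)-A(u+c),\,u-(u+c)\ran_{\mathcal{H}}=0 \quad\text{while}\quad u\neq u+c,
\end{align*}
and the operator \eqref{operator_representation} is monotone but \emph{not} strictly monotone on $\Wp{\mathcal{H}}$. The strict monotonicity asserted in the proposition is an artifact of transplanting the statement from the Dirichlet space $\Wpzero{\mathcal{H}}$ of Liu--Dai (or it holds for the full operator of problem \eqref{problem}, which includes the lower-order terms $|u|^{p-2}u+\mu(x)|u|^{q-2}u$). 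Maximal monotonicity is unaffected: as you note, a monotone, everywhere defined, continuous operator on a reflexive Banach space is maximal monotone, with no need for strictness.
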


For $s \in \R$, we set $s^{\pm}=\max\{\pm s,0\}$ and for $u \in \Wp{\mathcal{H}}$ we define $u^{\pm}(\cdot)=u(\cdot)^{\pm}$. We have
\begin{align*}
    u^{\pm} \in \Wp{\mathcal{H}}, \quad |u|=u^++u^-, \quad u=u^+-u^-.
\end{align*}

If $X$ is a Banach space and $\ph\in C^1(X,\R)$, then we define
\begin{align*}
    K_\ph=\left\{u\in X \, : \, \ph'(u)=0\right\}
\end{align*}
being the critical set of $\ph$. Furthermore, we say that $\ph$ satisfies the Cerami condition, if every sequence $\{u_n\}_{n \geq 1} \subseteq X$ such that $\{\ph(u_n)\}_{n \geq 1}\subseteq \R$ is bounded and such that
\begin{align*}
    \left(1+\|u_n\|_X\right)\ph'(u_n) \to 0 \quad \text{in }X^* \text{ as }n \to \infty,
\end{align*}
admits a strongly convergent subsequence.

This compactness-type condition on the functional $\ph$ leads to a deformation theorem from which one can derive the minimax theory for the critical values of $\ph$. A central result of this theory is the so-called mountain pass theorem due to Ambrosetti-Rabinowitz \cite{Ambrosetti-Rabinowitz-1973} which we recall next.

\begin{theorem}\label{theorem_mountain_pass}
   Let $\varphi\in C^1(X)$ be a functional satisfying the $C$-condition and let $u_1,u_2 \in X, \|u_2-u_1\|_X> \rho>0$,
   \begin{align*}
	\max \{\varphi(u_1),\varphi(u_2)\}<\inf \{\varphi(u): \|u-u_1\|_X=\rho\}=:m_\rho
   \end{align*}
   and $c=\inf_{\gamma \in \Gamma} \max_{0 \leq t \leq 1} \varphi(\gamma(t))$ with $\Gamma=\{\gamma \in C\l([0,1],X\r): \gamma(0)=u_1, \gamma(1)=u_2\}$.
   Then $c \geq m_\rho$ with $c$ being a critical value of $\varphi$.
\end{theorem}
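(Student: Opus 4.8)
The plan is the classical contradiction argument built on the deformation lemma attached to the Cerami condition.

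\textbf{Step 1: the bound $c\geq m_\rho$.} Fix an arbitrary $\gamma\in\Gamma$. The map $t\mapsto\|\gamma(t)-u_1\|_X$ is continuous on $[0,1]$, equals $0$ at $t=0$ and equals $\|u_2-u_1\|_X>\rho$ at $t=1$; by the intermediate value theorem there is $t_0\in(0,1)$ with $\|\gamma(t_0)-u_1\|_X=\rho$, hence $\max_{0\leq t\leq 1}\varphi(\gamma(t))\geq\varphi(\gamma(t_0))\geq m_\rho$. Taking the infimum over $\gamma\in\Gamma$ yields $c\geq m_\rho$. Moreover $\Gamma\neq\emptyset$ since the segment $\gamma(t)=(1-t)u_1+tu_2$ lies in $\Gamma$, and because $\varphi\in C^1(X)$ is continuous and $[0,1]$ is compact, $\sup_t\varphi((1-t)u_1+tu_2)<\infty$, so $c<\infty$. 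Together with the hypothesis we get $m_\rho\leq c<\infty$ and $c\geq m_\rho>\max\{\varphi(u_1),\varphi(u_2)\}$.

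\textbf{Step 2: deformation near level $c$.} Suppose, for contradiction, that $c$ is \emph{not} a critical value of $\varphi$, i.e.\ $K_\varphi\cap\varphi^{-1}(c)=\emptyset$. Since $\varphi$ satisfies the Cerami condition ($C$-condition), the quantitative deformation lemma applies at the level $c$: because $c>\max\{\varphi(u_1),\varphi(u_2)\}$ we may choose $\varepsilon_0>0$ with
\[ c-\varepsilon_0 \;>\; \max\{\varphi(u_1),\varphi(u_2)\}, \]
and then, for each sufficiently small $\varepsilon\in(0,\varepsilon_0)$, there is a continuous map $\eta\colon[0,1]\times X\to X$ such that $\eta(0,\cdot)=\mathrm{id}_X$, $\eta(t,u)=u$ whenever $\varphi(u)\notin[c-\varepsilon_0,c+\varepsilon_0]$, and $\eta(1,\varphi^{c+\varepsilon})\subseteq\varphi^{c-\varepsilon}$, where $\varphi^{a}:=\{u\in X:\varphi(u)\leq a\}$. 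In particular $\varphi(u_1),\varphi(u_2)<c-\varepsilon_0$ gives $\eta(t,u_1)=u_1$ and $\eta(t,u_2)=u_2$ for all $t$.

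\textbf{Step 3: pushing down an almost optimal path.} By definition of $c$ as an infimum, choose $\gamma\in\Gamma$ with $\max_{0\leq t\leq 1}\varphi(\gamma(t))\leq c+\varepsilon$, that is, $\gamma([0,1])\subseteq\varphi^{c+\varepsilon}$. Put $\tilde\gamma(t):=\eta(1,\gamma(t))$. Then $\tilde\gamma$ is continuous, $\tilde\gamma(0)=\eta(1,u_1)=u_1$, $\tilde\gamma(1)=\eta(1,u_2)=u_2$, so $\tilde\gamma\in\Gamma$, while $\tilde\gamma([0,1])\subseteq\eta(1,\varphi^{c+\varepsilon})\subseteq\varphi^{c-\varepsilon}$, whence $\max_{0\leq t\leq 1}\varphi(\tilde\gamma(t))\leq c-\varepsilon<c$. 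This contradicts $c=\inf_{\gamma\in\Gamma}\max_{0\leq t\leq 1}\varphi(\gamma(t))$. Hence $c$ is a critical value of $\varphi$, and combined with Step~1 this proves the theorem.

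\textbf{Main obstacle.} The genuine content is the deformation lemma under the Cerami condition rather than the (stronger) Palais--Smale condition: one builds a locally Lipschitz pseudo-gradient field for $\varphi$ off $K_\varphi$, truncates and rescales it (in the Cerami setting by the factor $(1+\|u\|_X)^{-1}$ to absorb the weaker compactness), and integrates the associated ODE to obtain $\eta$. The Cerami condition is exactly what prevents $(1+\|u\|_X)\|\varphi'(u)\|_{X^*}$ from approaching $0$ on the strip $\{c-\varepsilon_0\leq\varphi\leq c+\varepsilon_0\}$, so that the flow strictly decreases $\varphi$ across this strip in finite time without escaping to infinity; this is the only place where the compactness hypothesis is really used, and it is the step I expect to require the most care.
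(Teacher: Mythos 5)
Your argument is correct, but note that the paper does not prove this statement at all: Theorem \ref{theorem_mountain_pass} is recalled as a classical result of Ambrosetti--Rabinowitz and is used as a black box, so there is no proof in the paper to compare against. What you give is the standard proof: the intermediate value theorem applied to $t\mapsto\|\gamma(t)-u_1\|_X$ for Step 1, and the deformation--pushdown contradiction for the criticality of $c$. Both steps are sound; the choice of $\varepsilon_0$ with $c-\varepsilon_0>\max\{\varphi(u_1),\varphi(u_2)\}$ correctly guarantees that the endpoints are fixed by the deformation, so $\tilde\gamma\in\Gamma$ and the contradiction goes through. The one piece you do not actually prove is the quantitative deformation lemma under the Cerami condition, which is the entire technical content of the theorem in this setting (the rescaling of the pseudo-gradient field by $(1+\|u\|_X)^{-1}$ and the argument that the Cerami condition forces $(1+\|u\|_X)\|\varphi'(u)\|_{X^*}$ to be bounded away from zero on a strip around a non-critical level). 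You identify this honestly as the main obstacle and sketch the right mechanism, so as a proof modulo a standard cited lemma your proposal is complete and matches how such results are established in the literature the paper relies on.
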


\section{A priori estimates for double phase problems} \label{section_3}

In this section we are going to prove the boundedness of weak solutions for double phase problems stated in a more general form than \eqref{problem}. For example, we allow in this section a convection term, that is, the dependence on the right-hand side on the gradient of the solution. We point out that such a result is of independent interest and can be applied for several model problems of this type. We consider the problem
\begin{equation}\label{problem2}
    \begin{aligned}
	-\divergenz\left(|\nabla u|^{p-2}\nabla u+\mu(x) |\nabla u|^{q-2}\nabla u\right) & =h_1(x,u,\nabla u)\quad && \text{in } \Omega,\\
	\left(|\nabla u|^{p-2}\nabla u+\mu(x) |\nabla u|^{q-2}\nabla u\right) \cdot \nu & = h_2(x,u) &&\text{on } \partial \Omega,
    \end{aligned}
\end{equation}
where we assume the following hypotheses on the data:
\begin{enumerate}[leftmargin=1.7cm]
    \item[H($h_1,h_2$)]			

	$h_1\colon\Omega\times\R\times\R^N\to \R$ and $h_2\colon\partial\Omega\times\R\to\R$ are Carath\'eodory functions satisfying
	\begin{align*}
	    |h_1(x,s,\xi)| & \leq a_1|\xi|^{p\frac{r_1-1}{r_1}}+a_2|s|^{r_1-1}+a_3 &&\text{for a.\,a.\,}x\in\Omega,\\
	    |h_2(x,s)| & \leq a_4 |s|^{r_2-1}+a_5&&\text{for a.\,a.\,}x\in\partial\Omega,
	\end{align*}
	for all $s \in \R$ and for all $\xi \in \R^N$ with positive constants $a_i$, $i\in \{1,\ldots , 5\}$, and $q<r_1\leq p^*$ as well as $q<r_2 \leq p_*$, where $p^*$ and $p_*$ are the critical exponents of $p$ stated in \eqref{critical_exponents}.
\end{enumerate}

We call $u\in \Wp{\mathcal{H}}$ a weak solution of problem \eqref{problem2} if
\begin{align}\label{weak_solution2}
    \into \left(|\nabla u|^{p-2}\nabla u+\mu(x)|\nabla u|^{q-2}\nabla u \right)\cdot\nabla v\,dx
    =\into h_1(x,u,\nabla u)v\,dx+\int_{\partial\Omega}h_2(x,u)v \,d\sigma
\end{align}
is satisfied for all test functions $v \in \Wp{\mathcal{H}}$.

Exploiting the recent result of Marino-Winkert \cite{Marino-Winkert-2019} we can prove the following result about the boundedness of weak solutions of \eqref{weak_solution2}.

\begin{theorem}\label{theorem_apriori}
    Let hypotheses \eqref{condition_poincare} and H($h_1,h_2$) be satisfied and let $u \in \Wp{\mathcal{H}}$ be a weak solution of problem \eqref{problem2}. Then, $u \in \Linf$.
\end{theorem}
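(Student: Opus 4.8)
The plan is to run a Moser iteration scheme adapted to the double phase setting, testing the weak formulation \eqref{weak_solution2} with suitable truncated powers of $u$ and iterating over an increasing sequence of exponents. First I would introduce, for $h>0$ and $\kappa>1$, the truncation $u_h = \min\{|u|, h\}$ and use the test function $v = u\, u_h^{\kappa - p}$ (taking care with the signs $u^\pm$, so that the argument can be carried out separately on $\{u > 0\}$ and $\{u < 0\}$, or by working with $|u|$). Plugging this into \eqref{weak_solution2}, the left-hand side produces, after discarding the nonnegative $\mu(x)|\nabla u|^{q-2}$ contribution where convenient and keeping the $|\nabla u|^p$ term, a lower bound of the form $c\,\rho$ of the $p$-gradient of $u\, u_h^{(\kappa-p)/p}$ on the truncation region; in other words one controls $\|\nabla(u u_h^{(\kappa-p)/p})\|_p^p$ from below. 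The right-hand side is estimated using H($h_1,h_2$): the domain term $\int_\Omega h_1(x,u,\nabla u)\, u u_h^{\kappa-p}\,dx$ is bounded using the growth $|h_1|\le a_1|\nabla u|^{p(r_1-1)/r_1}+a_2|u|^{r_1-1}+a_3$, where the gradient-dependent piece is absorbed via Young's inequality into the left-hand side (this is exactly where the exponent $p\frac{r_1-1}{r_1}$ is designed so that it can be split off), and the boundary term $\int_{\partial\Omega} h_2(x,u) u u_h^{\kappa-p}\,d\sigma$ is handled via the trace embedding Proposition \ref{proposition_embeddings}(iv).

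Next I would convert these estimates into a recursive inequality for the quantities $\|u u_h^{(\kappa_n - p)/p}\|$ measured in the critical Lebesgue norm. Using the Sobolev embedding $\Wp{p}\hookrightarrow\Lp{p^*}$ (available via Proposition \ref{proposition_embeddings}(ii) applied to $u u_h^{(\kappa-p)/p}\in\Wp{p}$), one gets an inequality roughly of the shape
\begin{align*}
    \|u\|_{L^{\kappa_{n+1}}}^{\kappa_n} \le C^n\,\big(1+\|u\|_{L^{\kappa_n}}^{\kappa_n}\big)
\end{align*}
with $\kappa_{n+1} = \frac{p^*}{p}\kappa_n$ (and $\kappa_0$ chosen just above $\max\{r_1, r_2\}$, which is admissible since $r_1\le p^*$ and $r_2\le p_*<p^*$, so $u\in L^{\kappa_0}$ to start with, by Proposition \ref{proposition_embeddings}). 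Then I would let $h\to\infty$ (monotone convergence, to replace $u u_h^{(\kappa-p)/p}$ by $|u|^{\kappa/p}$), take $p$-th roots, and iterate. Tracking the constants $C^n$ and the exponents $\kappa_n\to\infty$ geometrically, a standard convergence-of-products argument gives $\|u\|_{L^{\kappa_n}}\le M$ uniformly in $n$, whence $u\in L^\infty(\Omega)$ by letting $n\to\infty$. In fact, since the structure is exactly that treated by Marino-Winkert \cite{Marino-Winkert-2019}, the cleanest route is to verify that \eqref{problem2} under H($h_1,h_2$) falls into the abstract framework of that paper and simply invoke their boundedness theorem; I would state the correspondence of the hypotheses explicitly (their structural conditions on the lower-order terms match $a_1|\xi|^{p(r_1-1)/r_1}+a_2|s|^{r_1-1}+a_3$ and $a_4|s|^{r_2-1}+a_5$, and their exponent restrictions $r_1\le p^*$, $r_2\le p_*$ are exactly ours).

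The main obstacle is the truncation bookkeeping in the double phase setting: unlike the pure $p$-Laplacian, the operator has the extra $\mu(x)|\nabla u|^{q-2}\nabla u$ term, and one must be careful that it contributes with a favorable sign when tested against $u u_h^{\kappa-p}$ (it does, since $\kappa - p > 0$ and $\mu\ge 0$), so it can simply be dropped — meaning the a priori bound really only uses the $p$-part of the operator, which is why no extra regularity of $\mu$ beyond $\mu\in\Linf$ is needed. The second delicate point is ensuring the iteration starts: one needs $u$ to lie in a Lebesgue space with exponent strictly above $\max\{r_1,r_2\}$ before the first step, and the worst case $r_1 = p^*$ forces a slightly more careful first iteration (testing with a power just below the borderline and using the continuous, not compact, embedding into $\Lp{p^*}$); but this is routine and is handled in \cite{Marino-Winkert-2019}. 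Beyond that, everything is a bounded sequence of elementary estimates — Hölder, Young, Sobolev, trace — and the conclusion $u\in\Linf$ follows.
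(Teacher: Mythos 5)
Your proposal is correct and follows essentially the same route as the paper: test the weak formulation with a truncated power $v=u\,u_h^{\kappa p}$ of $u$ (after reducing to $u\ge 0$ via $u=u^+-u^-$), observe that the $\mu$-weighted $q$-terms enter with a favorable sign and can be discarded, and then reduce to the $p$-Laplacian setting of Marino--Winkert \cite{Marino-Winkert-2019}, whose Moser iteration (Theorem 3.1 there) is invoked verbatim. The only cosmetic differences are the exponent convention in the test function and that you sketch the iteration in more detail where the paper simply cites \cite{Marino-Winkert-2019} starting from their estimate (3.2).
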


\begin{proof}
    It is know that $u=u^+ -u^-$. Therefore, we can assume, without any loss of generality, that $u \geq 0$.

    Let $h>0$ and define $u_h:=\min\{u,h\}$. Choosing $v=uu_h^{\kappa p}$ with $\kappa>0$ as test function in \eqref{weak_solution2} we have
    \begin{align}\label{M1}
      \begin{split}
	&\into |\nabla u|^{p} u_h^{\kappa p}\,dx+ \kappa p\int_\Omega  |\nabla u|^{p-2}\nabla u\cdot \nabla u_h u_h^{\kappa p-1}u\,dx\\
	&\quad+\into \mu(x)|\nabla u|^{q}u_h^{\kappa p}\,dx+ \kappa p\int_\Omega \mu(x) |\nabla u|^{q-2}\nabla u\cdot \nabla u_h u_h^{\kappa p-1}u\,dx\\
	&= \into h_1(x,u,\nabla u) uu_h^{\kappa p} \,dx+\int_{\partial\Omega}h_2(x,u)uu_h^{\kappa p} \,d\sigma.
      \end{split}
    \end{align}
    Obviously, the third and the fourth integral on the left-hand side of \eqref{M1} are nonnegative. This gives
    \begin{align*}
      \begin{split}
	&\into |\nabla u|^{p} u_h^{\kappa p}\,dx+ \kappa p\int_\Omega  |\nabla u|^{p-2}\nabla u\cdot \nabla u_h u_h^{\kappa p-1}u\,dx\\
	&\leq  \into h_2(x,u,\nabla u) uu_h^{\kappa p} \,dx+uu_h^{\kappa p}+\int_{\partial\Omega}h_2(x,u)uu_h^{\kappa p} \,d\sigma.
      \end{split}
    \end{align*}
    Since $\Wp{\mathcal{H}}\subseteq \Wp{p}$ we can proceed exactly as in the proof of Theorem 3.1 of Marino-Winkert \cite[starting with (3.2)]{Marino-Winkert-2019} to obtain that $u \in \Linf$.
\end{proof}

\section{Constant sign solutions}\label{section_4}

In this section we are going to prove the existence of constant sign solutions of problem \eqref{problem}. First, we state our assumptions.

\begin{enumerate}
    \item[(H)]
	$f\colon \Omega \times \R \to \R$ and $g\colon\partial\Omega\times\R\to\R$ are Carath\'{e}odory functions such that the following hold:
	\begin{enumerate}
	    \item[(i)]
		There exist constants $c_1, c_2>0$ such that
		\begin{align*}
		|f(x,s)| & \leq c_1\left( 1 +|s|^{r_1-1}\right) \quad \text{for a.\,a.\,}x\in\Omega,\\
		|g(x,s)| & \leq c_2\left(1+|s|^{r_2-1}\right) \quad \text{for a.\,a.\,}x\in\partial\Omega,
		\end{align*}
		for all $s\in \R$, where $q<r_1<p^*$ and $q<r_2<p_*$ with the critical exponents $p^*$ and $p_*$ given in \eqref{critical_exponent_domain} and \eqref{critical_exponent_boundary}, respectively;
	    \item[(ii)]
		\begin{align*}
		    & \lim_{s \to \pm \infty}\,\frac{f(x,s)}{|s|^{q-2}s}=+\infty \quad\text{uniformly for a.\,a.\,}x\in\Omega;\\[1ex]
		    & \lim_{s \to \pm \infty}\,\frac{g(x,s)}{|s|^{q-2}s}=+\infty \quad\text{uniformly for a.\,a.\,}x\in\partial \Omega;
		\end{align*}
	    \item[(iii)]
		\begin{align*}
		    &\lim_{s \to 0}\,\frac{f(x,s)}{|s|^{p-2}s}=0 \quad\text{uniformly for a.\,a.\,}x\in\Omega;\\[1ex]
		    &\lim_{s \to 0}\,\frac{g(x,s)}{|s|^{p-2}s}=0 \quad\text{uniformly for a.\,a.\,}x\in\partial\Omega;
		\end{align*}
	    \item[(iv)]
		The functions
		\begin{align*}
		    s\mapsto f(x,s)s-qF(x,s)
		    \quad\text{and}\quad
		    s\mapsto g(x,s)s-qG(x,s)
		\end{align*}
		are nondecreasing on $\R_+$ and nonincreasing on $\R_-$ for a.\,a.\,$x\in\Omega$ and for a.\,a.\,$x\in\partial\Omega$, respectively, where
		\begin{align*}
		    F(x,s)=\int_0^s f(x,t)\,dt
		    \quad\text{and}\quad
		    G(x,s)=\int_0^sg(x,t)\,dt;
		\end{align*}

	    \item[(v)]
		The functions
		\begin{align*}
		    \frac{f(x,s)}{|s|^{q-1}}
		    \quad\text{and}\quad
		    \frac{g(x,s)}{|s|^{q-1}}
		\end{align*}
		are strictly increasing on $(-\infty,0)$ and on $(0,+\infty)$ for a.\,a.\,$x\in\Omega$ and for a.\,a.\,$x\in\partial\Omega$, respectively.
	\end{enumerate}
\end{enumerate}

Note that the continuity of $f(x,\cdot)$ and $g(x,\cdot)$ along with (H)(iii) implies that
\begin{align*}
    f(x,0)=0 \quad\text{for a.\,a.\,}x\in\Omega
    \quad\text{and}\quad
    g(x,0)=0 \quad\text{for a.\,a.\,}x\in\partial\Omega.
\end{align*}

We say that $u\in \Wp{\mathcal{H}}$ is a weak solution of problem \eqref{problem} if it satisfies
\begin{align*}
    &\into \left(|\nabla u|^{p-2}\nabla u+\mu(x)|\nabla u|^{q-2}\nabla u \right)\cdot\nabla v \,dx +\into \left(|u|^{p-2} u+\mu(x)|u|^{q-2}u \right)v \,dx\\
    &=\into f(x,u) v \,dx+\int_{\partial\Omega}g(x,u)v\,d\sigma
\end{align*}
for all test functions $v \in \Wp{\mathcal{H}}$.

The energy functional $\ph\colon \Wp{\mathcal{H}} \to \R$ corresponding to problem \eqref{problem} is defined by
\begin{align*}
    \begin{split}
	\ph(u)
	& =\frac{1}{p}\|\nabla u\|_p^p+\frac{1}{q}\|\nabla u\|_{q,\mu}^q+\frac{1}{p}\| u\|_p^p+\frac{1}{q}\|u\|_{q,\mu}^q-\into F(x,u)\,dx-\int_{\partial\Omega} G(x,u)\,d\sigma
    \end{split}
\end{align*}
for all $u \in \Wp{\mathcal{H}}$. Note that $\ph\in C^1(W^{1,\mathcal{H}}(\Omega))$, see Perera-Squassina \cite[Proposition 2.1]{Perera-Squassina-2018}, and that any $u\in K_{\ph}$ is a solution of problem \eqref{problem}.

First we want to produce two constant sign solutions. To this end, we consider the positive and negative truncations of the energy functional $\ph$. So, we consider $\ph_{\pm}\colon \Wp{\mathcal{H}} \to \R$ defined by
\begin{align*}
    \ph_{\pm}(u)
    & =\frac{1}{p}\|\nabla u\|_p^p+\frac{1}{q}\|\nabla u\|_{q,\mu}^q+\frac{1}{p}\| u\|_p^p+\frac{1}{q}\|u\|_{q,\mu}^q\\
    &\quad -\into F\l(x,\pm u^{\pm}\r)\,dx-\int_{\partial\Omega} G\l(x,\pm u^{\pm}\r)\,d\sigma.
\end{align*}

\begin{proposition}\label{proposition_cerami}
    Let hypotheses \eqref{condition_poincare} and (H) be satisfied. Then the functionals $\ph_{\pm}$ fulfill the Cerami condition.
\end{proposition}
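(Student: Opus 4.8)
The plan is to verify the Cerami condition for $\ph_+$; the argument for $\ph_-$ is identical after replacing $u^+$ by $u^-$. So let $\{u_n\}_{n\ge 1}\subseteq\Wp{\mathcal{H}}$ be a sequence with $|\ph_+(u_n)|\le M_1$ for some $M_1>0$ and $(1+\|u_n\|_{1,\mathcal{H}})\ph_+'(u_n)\to 0$ in $\Wp{\mathcal{H}}^*$. First I would test the relation $\langle\ph_+'(u_n),v\rangle\to 0$ with $v=-u_n^-$: since $F(x,\pm u^{\pm})$ and $G(x,\pm u^{\pm})$ depend only on $u^+$, the nonlinear boundary and domain terms drop out, leaving $\hat\rho_{\mathcal{H}}(u_n^-)\le\varepsilon_n$ for a null sequence $\varepsilon_n$, whence Proposition~\ref{proposition_modular_properties2}(v) gives $u_n^-\to 0$ in $\Wp{\mathcal{H}}$. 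It then suffices to bound $\{u_n^+\}$.

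The heart of the matter is the boundedness of $\{u_n^+\}$, and this is where hypothesis (H)(iv) — the Ambrosetti–Rabinowitz-type monotonicity of $s\mapsto f(x,s)s-qF(x,s)$ and $s\mapsto g(x,s)s-qG(x,s)$ — does its work. Combining $q\ph_+(u_n)\le qM_1$ with $\langle\ph_+'(u_n),u_n^+\rangle\ge-\varepsilon_n$ and noting that the $\frac1p-\frac1q$ coefficients in front of the $p$-terms are nonnegative, I would arrive at an inequality of the form
\begin{align*}
    \into\big(f(x,u_n^+)u_n^+-qF(x,u_n^+)\big)\,dx+\int_{\partial\Omega}\big(g(x,u_n^+)u_n^+-qG(x,u_n^+)\big)\,d\sigma\le M_2
\end{align*}
for some $M_2>0$. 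Arguing by contradiction, suppose $\|u_n^+\|_{1,\mathcal{H}}\to\infty$ and set $y_n=u_n^+/\|u_n^+\|_{1,\mathcal{H}}$, so $\|y_n\|_{1,\mathcal{H}}=1$; passing to a subsequence, $y_n\weak y$ in $\Wp{\mathcal{H}}$, $y_n\to y$ in $\Lp{r}$ and in $\Lprand{r}$ for the relevant subcritical $r$, and $y_n\to y$ pointwise a.e. If $y\not\equiv 0$, then on $\{y>0\}$ we have $u_n^+\to\infty$, and the superlinearity hypothesis (H)(ii) together with Fatou's lemma forces $\into F(x,u_n^+)\,dx+\int_{\partial\Omega}G(x,u_n^+)\,d\sigma$ to grow faster than $\|u_n^+\|_{1,\mathcal{H}}^q$, contradicting $\ph_+(u_n)\ge -M_1$ (after using Proposition~\ref{proposition_modular_properties2}(iv) to bound the modular by $\|u_n^+\|_{1,\mathcal{H}}^q$). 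If $y\equiv 0$, I would use the now-standard device: pick $t_n\in[0,1]$ with $\ph_+(t_nu_n^+)=\max_{t\in[0,1]}\ph_+(tu_n^+)$; for any fixed $L>0$, $\ph_+(t_nu_n^+)\ge\ph_+\big(L\,y_n\big)$, and since $Ly_n\to 0$ in the relevant Lebesgue spaces while $\hat\rho_{\mathcal{H}}(Ly_n)$ stays bounded, the right-hand side is bounded below by a quantity tending to a positive multiple of $L^p$ (minus a vanishing term), so $\ph_+(t_nu_n^+)\to\infty$. On the other hand, from $\langle\ph_+'(t_nu_n^+),t_nu_n^+\rangle=0$ (interior maximum, or $t_n\in\{0,1\}$ handled separately) and the monotonicity in (H)(iv) applied with the dilation inequality, one shows $\ph_+(t_nu_n^+)\le\ph_+(u_n^+)+\text{const}\le M_1+\text{const}$, a contradiction. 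Hence $\{u_n^+\}$, and therefore $\{u_n\}$, is bounded.

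Finally, with $\{u_n\}$ bounded in the reflexive space $\Wp{\mathcal{H}}$, pass to a subsequence $u_n\weak u$; by Proposition~\ref{proposition_embeddings}(iii),(v) the embeddings into $\Lp{r}$ and $\Lprand{r}$ are compact for the subcritical exponents occurring in (H)(i), so $\into f(x,u_n)(u_n-u)\,dx\to 0$ and $\int_{\partial\Omega}g(x,u_n)(u_n-u)\,d\sigma\to 0$, and likewise the lower-order terms $\into(|u_n|^{p-2}u_n+\mu|u_n|^{q-2}u_n)(u_n-u)\,dx\to 0$. Testing $\langle\ph_+'(u_n),u_n-u\rangle\to 0$ and rearranging then yields $\limsup_{n\to\infty}\langle A(u_n),u_n-u\rangle\le 0$, and the $(\Ss)_+$ property of $A$ gives $u_n\to u$ strongly in $\Wp{\mathcal{H}}$. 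The main obstacle is the $y\equiv 0$ case of the boundedness argument: one must carefully exploit (H)(iv) to transfer the mountain-pass-height lower bound at $t_nu_n^+$ into a contradiction with the Cerami energy bound at $u_n^+$, since no direct Ambrosetti–Rabinowitz global superquadraticity is assumed.
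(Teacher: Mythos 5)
Your proposal is correct and follows essentially the same route as the paper: kill $u_n^-$ by testing with $-u_n^-$, derive the bound on $\int_\Omega(f(x,u_n^+)u_n^+-qF(x,u_n^+))\,dx+\int_{\partial\Omega}(g(x,u_n^+)u_n^+-qG(x,u_n^+))\,d\sigma$ from the Cerami data, split the boundedness argument into the cases $y\neq 0$ (Fatou plus (H)(ii)) and $y\equiv 0$ (the $t_n$-maximizer device combined with the monotonicity in (H)(iv)), and finish with compact embeddings and the $(\Ss)_+$/Kadec--Klee mechanism to upgrade weak to strong convergence. The only cosmetic differences are that you invoke the $(\Ss)_+$ property of $A$ where the paper argues directly via uniform convexity, and you use a generic parameter $L$ in place of the paper's $(qk)^{1/q}$.
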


\begin{proof}
    We will show the proof only for $\ph_+$, the proof for $\ph_-$ works in a similar way.

    Let $\{u_n\}_{n \geq 1} \subseteq \Wp{\mathcal{H}}$ be a sequence such that
    \begin{align}\label{c1}
	\left|\ph_+(u_n)\right| \leq M_1 \quad \text{for some }M_1>0 \text{ and for all }n\in\N
    \end{align}
    and
    \begin{align}\label{c2}
	\left(1+\|u_n\|_{1,\mathcal{H}}\right)\ph_+'(u_n)\to 0 \quad\text{in }\Wp{\mathcal{H}}^*.
    \end{align}
    Due to \eqref{c2} we have
    \begin{align}\label{c3}
	\begin{split}
	    &\left| \into |\nabla u_n|^{p-2}\nabla u_n \cdot \nabla v\,dx + \into \mu(x) |\nabla u_n|^{q-2}\nabla u_n \cdot \nabla v \,dx\right.\\
	    &\quad+\into |u_n|^{p-2} u_n v\,dx + \into \mu(x) |u_n|^{q-2}u_n v \,dx\\
	    & \left.\quad -\into f\l(x,u_n^+\r)v\,dx -\int_{\partial\Omega}g\l(x,u_n^+\r)v\,d\sigma\right| \leq \frac{\eps_n\|v\|_{1,\mathcal{H}}}{1+\|u_n\|_{1,\mathcal{H}}}
	\end{split}
    \end{align}
    for all $v \in \Wp{\mathcal{H}}$ with $\eps_n \to 0^+$. Taking $v=-u_n^- \in \Wp{\mathcal{H}}$ in \eqref{c3} we obtain
    \begin{align*}
	\l\|\nabla u_n^-\r\|_p^p+\l\|\nabla u_n^-\l\|_{q,\mu}^q +\r\|u_n^-\r\|_p^p+\l\|u_n^-\r\|_{q,\mu}^q&\leq \eps_n\quad\text{for all }n\in\N.
    \end{align*}
    Then, $\hat{\rho}_\mathcal{H}(u_n^-)\to 0$ as $n \to \infty$. Hence, by Proposition \ref{proposition_modular_properties2}(v) we have
    \begin{align*}
	\l\|u_n^-\r\|_{1,\mathcal{H}} \to 0\quad\text{as }n\to \infty.
    \end{align*}
    Thus
    \begin{align}\label{c4}
	u_n^- \to 0 \quad\text{in }\Wp{\mathcal{H}}.
    \end{align}
    Using \eqref{c1}  and \eqref{c4} we get
    \begin{align}\label{c5}
	\begin{split}
	    &\frac{q}{p}\l\|\nabla u_n^+\r\|_p^p+\l\|\nabla u_n^+\r\|_{q,\mu}^q+\frac{q}{p}\l\| u_n^+\r\|_p^p+\l\|u_n^+\r\|_{q,\mu}^q\\
	    & \quad -\into qF\l(x, u_n^+\r)\,dx-\int_{\partial\Omega} qG\l(x,u_n^+\r)\,d\sigma \leq M_2 \quad\text{for all }n\in\N
	\end{split}
    \end{align}
    for some $M_2>0$. We choose $v=u_n^+ \in \Wp{\mathcal{H}}$ in \eqref{c3} and obtain
    \begin{align}\label{c6}
	\begin{split}
	    & - \l\|\nabla u_n^+\r\|_p^p - \l\|\nabla u_n^+\r\|_{q,\mu}^q- \l\|u_n^+\r\|_p^p-\l\|u_n^+\r\|_{q,\mu}^q\\
	    & \quad +\into f\l(x,u_n^+\r)u_n^+\,dx +\int_{\partial\Omega}g\l(x,u_n^+\r)u_n^+\,d\sigma  \leq \eps_n \quad\text{for all }n\in\N.
	\end{split}
    \end{align}
    Now we add \eqref{c5} and \eqref{c6} to get
    \begin{align}\label{c7}
	\begin{split}
	    &\left(\frac{q}{p}-1\right)\l\|\nabla u_n^+\r\|_p^p +\left(\frac{q}{p}-1\right)\l\| u_n^+\r\|_p^p+\into\left(f\l(x,u_n^+\r)u_n^+- qF\l(x, u_n^+\r)\right)\,dx\\
	    & \qquad +\int_{\partial\Omega} \left(g\l(x,u_n^+\r)u_n^+-qG\l(x,u_n^+\r)\right)\,d\sigma \leq M_3 \quad\text{for all }n\in\N.
	\end{split}
    \end{align}

\medskip

\noindent
    {\bf Claim: } The sequence $\{u_n^+\}_{n \geq 1} \subseteq \Wp{\mathcal{H}}$ is bounded.

\medskip

    Arguing indirectly, we suppose, by passing to a subsequence if necessary, that
    \begin{align}\label{c7c}
	\l\|u_n^+\r\|_{1,\mathcal{H}} \to +\infty \quad\text{as }n\to +\infty.
    \end{align}
    Defining $y_n=\frac{u_n^+}{\l\|u_n^+\r\|_{1,\mathcal{H}}}$ for $n\in \N$ we see that $\|y_n\|_{1,\mathcal{H}}=1$ and $y_n \geq 0$ for all $n \in \N$. Thus, we may assume that
    \begin{align}\label{c8}
	y_n \weak y \quad \text{in }\Wp{\mathcal{H}}\quad\text{and}\quad y_n \to y \quad\text{in }\Lp{r_1} \text{ and } \Lprand{r_2},\quad \ y\geq 0,
    \end{align}
    see Proposition \ref{proposition_embeddings}(iii), (v).
	
\medskip

\noindent
    {\bf Case 1:} $y \neq 0$.

\medskip

    Let
    \begin{align*}
	\Omega_+=\left\{x\in \Omega : y(x)>0 \right\} \quad\text{and}\quad  \Gamma_+=\left\{x\in \partial\Omega : y(x)>0 \right\}.
    \end{align*}
    Of course, $|\Omega_+|_N>0$. Then, because of \eqref{c8} we have
    \begin{align*}
	&u_n^+(x) \to +\infty \quad \text{for a.\,a.\,}x\in\Omega_+
    \end{align*}
    and hence, due to (H)(ii),
    \begin{align}\label{c9}
	\frac{F(x,u_n^+(x))}{u_n^+(x)^q}\to +\infty \quad \text{for a.\,a.\,}x\in\Omega_+.
    \end{align}
    Applying \eqref{c9}, hypothesis (H)(ii) and Fatou's Lemma gives
    \begin{align}\label{c10}
	\int_{\Omega_+} \frac{F(x,u_n^+)}{\l\|u_n^+\r\|^q_{1,\mathcal{H}}}\,dx\to +\infty.
    \end{align}
    Furthermore, by (H)(i) and (ii) we have
    \begin{align}\label{c11}
	F(x,s) \geq -M_4  \quad \text{for a.\,a.\,}x\in\Omega, \text{ for all }s\in\R,
    \end{align}
    and for some $M_4>0$. From \eqref{c11} it follows
    \begin{align*}
	\int_{\Omega} \frac{F(x,u_n^+)}{\l\|u_n^+\r\|^q_{1,\mathcal{H}}}\,dx
	& = \int_{\Omega_+} \frac{F(x,u_n^+)}{\l\|u_n^+\r\|^q_{1,\mathcal{H}}}\,dx+\int_{\Omega\setminus \Omega_+} \frac{F(x,u_n^+)}{\l\|u_n^+\r\|^q_{1,\mathcal{H}}}\,dx\\
	& \geq \int_{\Omega_+} \frac{F(x,u_n^+)}{\l\|u_n^+\r\|^q_{1,\mathcal{H}}}\,dx - \frac{M_4}{\l\|u_n^+\r\|^q_{1,\mathcal{H}}}|\Omega|_N.
    \end{align*}
    Therefore, due to \eqref{c7c} and \eqref{c10}, we have
    \begin{align}\label{c12}
	\int_{\Omega} \frac{F(x,u_n^+)}{\l\|u_n^+\r\|^q_{1,\mathcal{H}}}\,dx\to +\infty.
    \end{align}
    If the Hausdorff surface measure of $\Gamma_+$ is positive, we can prove in a similar way that
    \begin{align}\label{c13}
	\int_{\partial\Omega} \frac{G(x,u_n^+)}{\l\|u_n^+\r\|^q_{1,\mathcal{H}}}\,d\sigma \to+ \infty,
    \end{align}
    or otherwise
    \begin{align}\label{c14}
	\int_{\partial\Omega} \frac{G(x,u_n^+)}{\l\|u_n^+\r\|^q_{1,\mathcal{H}}}\,d\sigma =0.
    \end{align}
    Thus, we obtain from \eqref{c12}, \eqref{c13} and \eqref{c14} that
    \begin{align}\label{c15}
	\int_{\Omega} \frac{F(x,u_n^+)}{\l\|u_n^+\r\|^q_{1,\mathcal{H}}}\,dx+\int_{\partial\Omega} \frac{G(x,u_n^+)}{\l\|u_n^+\r\|^q_{1,\mathcal{H}}}\,d\sigma\to +\infty.
    \end{align}

    On the other side we obtain from \eqref{c1} and \eqref{c4} that
    \begin{align*}
	&\int_{\Omega} \frac{F(x,u_n^+)}{\l\|u_n^+\r\|^q_{1,\mathcal{H}}}\,dx+\int_{\partial\Omega} \frac{G(x,u_n^+)}{\l\|u_n^+\r\|^q_{1,\mathcal{H}}}\,d\sigma\\
	&\leq \frac{1}{\l\|u_n^+\r\|_{1,\mathcal{H}}^{q-p}}\|\nabla y_n\|_p^p+\frac{1}{q} \|\nabla y_n\|_{q,\mu}^q+\frac{1}{\l\|u_n^+\r\|_{1,\mathcal{H}}^{q-p}}\|y_n\|_p^p+\frac{1}{q} \| y_n\|_{q,\mu}^q+M_5
    \end{align*}
    for all $n \in \N$ and for some $M_5>0$. This shows, because of $p<q$, \eqref{c7c} and $\|y_n\|_{1,\mathcal{H}}=1$ for all $n\in\N$, that
    \begin{align*}
	\int_{\Omega} \frac{F(x,u_n^+)}{\l\|u_n^+\r\|^q_{1,\mathcal{H}}}\,dx+\int_{\partial\Omega} \frac{G(x,u_n^+)}{\l\|u_n^+\r\|^q_{1,\mathcal{H}}}\,d\sigma \leq M_6 \quad\text{for all }n \in\N,
    \end{align*}
    for some $M_6>0$, which is a contradiction to \eqref{c15}.

\medskip

\noindent
    {\bf Case 2:} $y \equiv 0$.

\medskip

    Let $k \geq 1$ and put
    \begin{align*}
	v_n =(qk)^{\frac{1}{q}}y_n \quad\text{for all }n \in \N.
    \end{align*}
    By the definition of $y_n$ we have
    \begin{align}\label{c16}
	v_n \weak 0\quad \text{in }\Wp{\mathcal{H}}\quad\text{and}\quad v_n \to 0 \quad\text{in }\Lp{r_1} \text{ and } \Lprand{r_2}.
    \end{align}
    From \eqref{c16} it follows that
    \begin{align}\label{c17}
	\into F(x,v_n)\,dx \to 0 \quad\text{and}\quad \int_{\partial\Omega} G(x,v_n)\,d\sigma \to 0.
    \end{align}
    Recall that the energy functional $\ph\colon \Wp{\mathcal{H}}\to\R$ of problem \eqref{problem} is defined by
    \begin{align*}
	\ph(u)=\frac{1}{p}\|\nabla u\|_p^p+\frac{1}{q}\|\nabla u\|_{q,\mu}^q+\frac{1}{p}\| u\|_p^p+\frac{1}{q}\| u\|_{q,\mu}^q-\into F(x,u)\,dx-\int_{\partial\Omega}G(x,u)\,d\sigma.
    \end{align*}
    We have
    \begin{align}\label{c21}
	\ph(u)\leq  \ph_+(u) \quad\text{for all }u\in \Wp{\mathcal{H}} \text{ with }u \geq 0.
    \end{align}
    We choose $t_n \in [0,1]$ such that
    \begin{align}\label{c18}
	\ph\l(t_n u_n^+\r) =\max \left\{ \ph\l(tu_n^+\r): 0 \leq t \leq 1 \right\}.
    \end{align}
    Since $\l\|u_n^+\r\|_{1,\mathcal{H}}\to +\infty$ there exists $n_0 \in \N$ such that
    \begin{align}\label{c19}
	0 < \frac{(qk)^{\frac{1}{q}}}{\l\|u_n^+\r\|_{1,\mathcal{H}}} \leq 1 \quad\text{for all }n \geq n_0.
    \end{align}
    Applying \eqref{c18}, \eqref{c19}, Proposition \ref{proposition_modular_properties2}(ii) and \ref{c17} we obtain
    \begin{align*}
	\ph\l(t_nu_n^+\r)
	& \geq  \ph(v_n)\\
	& =\frac{1}{p}q^{\frac{p}{q}}k^{\frac{p}{q}} \|\nabla y_n\|_p^p + k \|\nabla y_n\|_{q,\mu}^q+\frac{1}{p}q^{\frac{p}{q}}k^{\frac{p}{q}} \| y_n\|_p^p + k \| y_n\|_{q,\mu}^q\\
	& \qquad -\into F(x,v_n)\,dx-\int_{\partial\Omega}G(x,v_n)\,d\sigma\\
	&  \geq \min\left\{\frac{1}{p}q^{\frac{p}{q}},1\right\}k^{\frac{p}{q}} \left[ \|\nabla y_n\|_p^p +  \|\nabla y_n\|_{q,\mu}^q+\| y_n\|_p^p + \| y_n\|_{q,\mu}^q\right]\\
	& \qquad -\into F(x,v_n)\,dx-\int_{\partial\Omega}G(x,v_n)\,d\sigma\\
	&= \min\left\{\frac{1}{p}q^{\frac{p}{q}},1\right\}k^{\frac{p}{q}} \hat{\rho}_\mathcal{H}(u) -\into F(x,v_n)\,dx-\int_{\partial\Omega}G(x,v_n)\,d\sigma\\
	& \geq \min\left\{\frac{1}{p}q^{\frac{p}{q}},1\right\}k^{\frac{p}{q}}-M_7 \quad \text{for all }n \geq n_1,
    \end{align*}
    for some $n_1 \geq n_0$. Since $k \geq 1$ is arbitrary, we conclude that
    \begin{align}\label{c20}
	\ph\l(t_nu_n^+\r) \to +\infty \quad\text{as }n \to\infty.
    \end{align}
    From \eqref{c1}, \eqref{c4} and \eqref{c21} we obtain
    \begin{align}\label{c22}
	\ph(0)=0 \quad\text{and}\quad \ph(u_n^+) \leq M_8\quad \text{for all }n \in \N,
    \end{align}
    for some $M_8>0$. Combining \eqref{c20} and \eqref{c22} gives
    \begin{align}\label{c23}
	t_n \in (0,1) \quad\text{for all }n \geq n_2,
    \end{align}
    for some $n_2 \geq n_1$. By the chain rule, \eqref{c23} and \eqref{c18} imply that
    \begin{align*}
	0=\frac{d}{dt}\ph\l(tu_n^+\r)\Big|_{t=t_n} = \l\lan \ph' \l(t_n u_n^+\r),u_n^+\r\ran \quad\text{for all }n  \geq n_2.
    \end{align*}
    This means
    \begin{align}\label{c24}
	\begin{split}
	    & \l\|\nabla\l( t_n u_n^+\r)\r\|_p^p+\l\|\nabla \l(t_nu_n^+\r)\r\|_{q,\mu}^q+  \l\| t_n u_n^+\r\|_p^p+\l\|t_nu_n^+\r\|_{q,\mu}^q\\
	    &=\into f\left(x,t_nu_n^+\right)t_nu_n^+\,dx+\int_{\partial\Omega} g\left(x,t_nu_n^+\right)t_nu_n^+\,d\sigma
	\end{split}
    \end{align}
    for all $n \geq n_2$. By hypothesis (H)(iv) and \eqref{c7} we obtain
    \begin{align*}
	&\left(\frac{q}{p}-1\right)\l\|\nabla \l(t_nu_n^+\r)\r\|_p^p +\left(\frac{q}{p}-1\right)\l\| t_nu_n^+\r\|_p^p\\
	& \quad+\into\left( f\l(x,t_nu_n^+\r)t_nu_n^+-qF\l(x,t_nu_n^+\r)\right)\,dx+\int_{\partial\Omega}\left( g\l(x,t_nu_n^+\r)t_nu_n^+-qG\l(x,t_nu_n^+\r)\right)\,d\sigma\\
	&\leq \left(\frac{q}{p}-1\right)\l\|\nabla \l(t_nu_n^+\r)\r\|_p^p +\left(\frac{q}{p}-1\right)\l\| t_nu_n^+\r\|_p^p\\
	& \quad+\into\left( f\l(x,u_n^+\r)u_n^+-qF\l(x,u_n^+\r) \right)\,dx +\int_{\partial\Omega}\left(g\l(x,u_n^+\r)u_n^+-qG\l(x,u_n^+\r) \right)\,d\sigma\\
	&\leq \left(\frac{q}{p}-1\right)\l\|\nabla u_n^+\r\|_p^p +\left(\frac{q}{p}-1\right)\l\| u_n^+\r\|_p^p\\
	& \qquad +\into\left( f\l(x,u_n^+\r)u_n^+-qF\l(x,u_n^+\r) \right)\,dx +\int_{\partial\Omega}\left(g\l(x,u_n^+\r)u_n^+-qG\l(x,u_n^+\r) \right)\,d\sigma\\
	& \leq M_3
    \end{align*}
    for all $ n \geq n_3$. This gives
    \begin{align}\label{c25}
	\begin{split}
	    &\left(\frac{q}{p}-1\right)\l\|\nabla \l(t_nu_n^+\r)\r\|_p^p +\left(\frac{q}{p}-1\right)\l\| t_nu_n^+\r\|_p^p\\
	    &+\into f\l(x,t_nu_n^+\r)t_nu_n^+\,dx+\int_{\partial\Omega} g\l(x,t_nu_n^+\r)t_nu_n^+\,d\sigma\\
	    & \leq \into qF\l(x,t_nu_n^+\r)\,dx+\int_{\partial\Omega}qG\l(x,t_nu_n^+\r)\,d\sigma+M_3.
	\end{split}
    \end{align}
    Combining \eqref{c24} and \eqref{c25} leads to
    \begin{align*}
	\begin{split}
	    & \frac{q}{p}\l\|\nabla\l( t_n u_n^+\r)\r\|_p^p+\l\|\nabla \l(t_nu_n^+\r)\r\|_{q,\mu}^q+ \frac{q}{p} \l\| t_n u_n^+\r\|_p^p+\l\|t_nu_n^+\r\|_{q,\mu}^q\\
	    &\qquad -\into qF\l(x,t_nu_n^+\r)\,dx-\int_{\partial\Omega}qG\l(x,t_nu_n^+\r)\,d\sigma\\
	    & \leq M_{3},
	\end{split}
    \end{align*}
    for all $n \geq n_3$, which implies
    \begin{align*}
	q \ph\l(t_nu_n^+\r) \leq M_3 \quad\text{for all } n \geq n_3.
    \end{align*}
    This contradicts \eqref{c20} and so the claim is proved.

\medskip

    From \eqref{c4} and the Claim we know that the sequence $\{u_n\}_{n\geq 1}\subseteq \Wp{\mathcal{H}}$ is bounded. Therefore we may assume that
    \begin{align}\label{c27}
	u_n \weak u \quad\text{in }\Wp{\mathcal{H}}\quad\text{and}\quad u_n \to u \quad\text{in }\Lp{r_1} \text{ and } \Lprand{r_2}.
    \end{align}
    Due to \eqref{c27} we have
    \begin{align}\label{c28}
	\nabla u_n \weak \nabla u \quad\text{in } L^q_\mu\l(\Omega;\R^N\r) \quad\text{and}\quad \nabla u_n \weak \nabla u \quad\text{in }L^p\l(\Omega;\R^N\r).
    \end{align}
    Taking $v=u_n-u \in \Wp{\mathcal{H}}$ in \eqref{c3}, passing to the limit as $n\to\infty$ and using \eqref{c27} we obtain
    \begin{align}\label{c29}
	\|\nabla u_n\|_{ q,\mu} \to \|\nabla u\|_{q,\mu}\quad\text{and}\quad \|\nabla u_n\|_p \to \|\nabla u\|_p.
    \end{align}
    Since the spaces $ L^q_\mu\l(\Omega;\R^N\r)$ and $L^p\l(\Omega;\R^N\r)$ are uniformly convex, we know that they satisfy the Kadec-Klee property, see Gasi\'nski-Papageorgiou \cite[p.\,911]{Gasinski-Papageorgiou-2006}. Hence, from \eqref{c28} and \eqref{c29} it follows that
    \begin{align*}
	\nabla u_n \to \nabla u \quad\text{in } L^q_\mu\l(\Omega;\R^N\r) \quad\text{and}\quad \nabla u_n \to \nabla u \quad\text{in }L^p\l(\Omega;\R^N\r).
    \end{align*}
    Hence, by Proposition \ref{proposition_modular_properties}(ii) we conclude that
    \begin{align*}
	\|u_n - u\|_{1,\mathcal{H}} \to 0.
    \end{align*}
    Thus, $\ph_+$ fulfills the Cerami condition.
\end{proof}

The following proposition will be useful for later considerations.

\begin{proposition}\label{proposition_auxiliary_result}
    Let hypotheses \eqref{condition_poincare} and (H) be satisfied. Then for each $\eps>0$ there exist $\hat{c},\tilde{c}_\eps, \hat{c}_\eps >0$ such that
    \begin{align*}
	\ph(u),\, \ph_{\pm}(u) \geq
	\begin{cases}
	    \hat{c} \|u\|_{1,\mathcal{H}}^q - \tilde{c}_\eps \|u\|^{r_1}_{1,\mathcal{H}}- \hat{c}_\eps \|u\|_{1,\mathcal{H}}^{r_2} & \text{if } \|u\|_{1,\mathcal{H}} \leq 1,\\
	    \hat{c} \|u\|_{1,\mathcal{H}}^p - \tilde{c}_\eps \|u\|^{r_1}_{1,\mathcal{H}}- \hat{c}_\eps \|u\|_{1,\mathcal{H}}^{r_2} & \text{if } \|u\|_{1,\mathcal{H}}> 1.
	\end{cases}
    \end{align*}
\end{proposition}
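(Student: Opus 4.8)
The plan is to bound $\ph$ and $\ph_{\pm}$ from below by a fixed multiple of the modular $\hat{\rho}_{\mathcal{H}}$ minus two lower–order powers of $\|\cdot\|_{1,\mathcal{H}}$, and then read off the two regimes $\|u\|_{1,\mathcal{H}}\le 1$ and $\|u\|_{1,\mathcal{H}}>1$ from Proposition \ref{proposition_modular_properties2}(iii)--(iv). Note that the statement is trivially monotone in $\eps$ in the useful direction: once it holds for one small $\eps$, it holds for every larger $\eps$ with the same constants (the $\tilde c_\eps,\hat c_\eps$ are only required to exist), so we are free to shrink $\eps$ as needed.

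First I would record the growth estimate: combining (H)(i) for $|s|$ large with (H)(iii) for $|s|$ small gives, for each $\eps>0$, a constant $c_\eps>0$ such that $|f(x,s)|\le\eps|s|^{p-1}+c_\eps|s|^{r_1-1}$ and $|g(x,s)|\le\eps|s|^{p-1}+c_\eps|s|^{r_2-1}$ for a.a.\ $x$ and all $s\in\R$; integrating in $s$ and adjusting constants yields $F(x,s)\le\eps|s|^p+c_\eps|s|^{r_1}$, $G(x,s)\le\eps|s|^p+c_\eps|s|^{r_2}$, and the same bounds with $s$ replaced by $\pm u^{\pm}$ since $|{\pm}u^{\pm}|\le|u|$. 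Integrating over $\Omega$ and $\partial\Omega$ and using that $r_1<p^*$, $r_2<p_*$ together with the continuous embeddings $\Wp{\mathcal{H}}\hookrightarrow\Lp{r_1}$ and $\Wp{\mathcal{H}}\hookrightarrow\Lprand{r_2}$ of Proposition \ref{proposition_embeddings}(ii),(iv) gives
\begin{align*}
    \into F(x,u)\,dx+\intor G(x,u)\,d\sigma\le\eps\|u\|_p^p+\eps\|u\|_{p,\partial\Omega}^p+\tilde c_\eps\|u\|_{1,\mathcal{H}}^{r_1}+\hat c_\eps\|u\|_{1,\mathcal{H}}^{r_2}
\end{align*}
for suitable $\tilde c_\eps,\hat c_\eps>0$.

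Now the one non-automatic point: the boundary term $\eps\|u\|_{p,\partial\Omega}^p$ cannot be absorbed by $\|u\|_p^p$ alone. Since $\Wp{\mathcal{H}}\hookrightarrow\Wp{p}$ by Proposition \ref{proposition_embeddings}(i), the classical trace theorem on the Lipschitz domain $\Omega$ gives $\|u\|_{p,\partial\Omega}^p\le C_0\big(\|\nabla u\|_p^p+\|u\|_p^p\big)$, which makes $\|\nabla u\|_p^p$ available for absorption too. Substituting into the definition of $\ph$ and then choosing $\eps$ small enough that $\eps(1+C_0)\le\frac{1}{2p}$, the $p$-terms retain a coefficient $\ge\frac{1}{2p}$; hence, with $c_0:=\min\{\frac1{2p},\frac1q\}>0$,
\begin{align*}
    \ph(u)\ge c_0\,\hat{\rho}_{\mathcal{H}}(u)-\tilde c_\eps\|u\|_{1,\mathcal{H}}^{r_1}-\hat c_\eps\|u\|_{1,\mathcal{H}}^{r_2}.
\end{align*}
Finally Proposition \ref{proposition_modular_properties2}(iii) for $\|u\|_{1,\mathcal{H}}\le1$ and (iv) for $\|u\|_{1,\mathcal{H}}>1$ turn $\hat{\rho}_{\mathcal{H}}(u)$ into $\|u\|_{1,\mathcal{H}}^q$, respectively $\|u\|_{1,\mathcal{H}}^p$, and taking $\hat c:=c_0$ proves the estimate for $\ph$. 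The identical chain works for $\ph_{\pm}$, because the modular part of $\ph_{\pm}$ coincides with that of $\ph$ and $F(x,\pm u^{\pm})$, $G(x,\pm u^{\pm})$ satisfy the same bounds in terms of $|u|$.

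The step I expect to be the main obstacle is precisely the handling of the boundary integral: one must use the trace estimate to throw the surplus $\eps\|u\|_{p,\partial\Omega}^p$ back against the full first-order part $\|\nabla u\|_p^p+\|u\|_p^p$ of the energy (and only then shrink $\eps$); everything else is routine bookkeeping with Propositions \ref{proposition_embeddings} and \ref{proposition_modular_properties2}.
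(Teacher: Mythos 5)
Your proposal is correct and follows essentially the same route as the paper: bound $F,G$ via (H)(i),(iii) by $\eps|s|^p$ plus higher-order terms, absorb the resulting $\eps\|u\|_p^p$ and $\eps\|u\|_{p,\partial\Omega}^p$ into the $W^{1,p}$-part of the energy using the Sobolev and trace embeddings of $\Wp{p}$, control the $r_1$- and $r_2$-terms by $\|u\|_{1,\mathcal{H}}$ via Proposition \ref{proposition_embeddings}, and finish with Proposition \ref{proposition_modular_properties2}(iii)--(iv). The paper's published argument does exactly this with embedding constants $C_\Omega, C_{\partial\Omega}$ playing the role of your $C_0$, so no further comment is needed.
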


\begin{proof}
    We will show the proof only for the functional $\ph$, the proofs for the other functionals work in a similar way.

    Taking hypotheses (H)(i), (iii) into account, for a given $\eps>0$, there exist $\hat{c}_1=\hat{c}_1(\eps)>0$ and $\hat{c}_2=\hat{c}_2(\eps)>0$ such that
    \begin{align}\label{l1}
	\begin{split}
	    F(x,s) &\leq \frac{\eps}{p}|s|^p +\hat{c}_1|s|^{r_1} \quad \text{for a.\,a.\,}x\in \Omega,\\
	    G(x,s) &\leq \frac{\eps}{p}|s|^p +\hat{c}_2|s|^{r_2} \quad \text{for a.\,a.\,}x\in \partial \Omega.
	\end{split}
    \end{align}
    Let $u \in \Wp{\mathcal{H}}$. Applying \eqref{l1}, the Sobolev and trace embeddings for $\Wp{p}$ along with Propositions \ref{proposition_embeddings}(ii), (iii) and \ref{proposition_modular_properties}(c) we obtain
    \begin{align*}
	\begin{split}
	    &\ph(u)\\
	    & \geq \frac{1}{p}\|\nabla u\|_p^p+\frac{1}{q}\|\nabla u\|_{q,\mu}^q+\frac{1}{p}\| u\|_p^p+\frac{1}{q}\|u\|_{q,\mu}^q\\
	    & \qquad -\frac{\eps}{p} \|u\|_p^p - \hat{c}_1 \|u\|_{r_1}^{r_1}-\frac{\eps}{p} \|u\|_{p,\partial\Omega}^p - \hat{c}_2\|u\|_{r_2,\partial\Omega}^{r_2}\\
	    & \geq \frac{1}{p} \left[1- \l(C_\Omega^p +C_{\partial\Omega}^p\r)\eps\right] \|\nabla u\|_p^p+\frac{1}{q}\|\nabla u\|_{q,\mu}^q\\
	    & \qquad +\frac{1}{p} \left[1- (C_\Omega^p +C_{\partial\Omega}^p)\eps\right]\| u\|_p^p+\frac{1}{q}\|u\|_{q,\mu}^q- \hat{c}_1 \l(C_\Omega^{\mathcal{H}}\r)^{r_1} \|u\|_{1,\mathcal{H}}^{r_1}- \hat{c}_2\l(C_{\partial\Omega}^{\mathcal{H}}\r)^{r_2}\|u\|_{1,\mathcal{H}}^{r_2}\\
	    & \geq \min \l\{\frac{1}{p} \left[1- \l(C_\Omega^p +C_{\partial\Omega}^p\r)\eps\right],\frac{1}{q}\r\} \hat{\rho}_{\mathcal{H}}(u)- \hat{c}_1 \l(C_\Omega^{\mathcal{H}}\r)^{r_1} \|u\|_{1,\mathcal{H}}^{r_1}- \hat{c}_2\l(C_{\partial\Omega}^{\mathcal{H}}\r)^{r_2}\|u\|_{1,\mathcal{H}}^{r_2},
	\end{split}
    \end{align*}
    where $C_\Omega$ and $C_{\partial\Omega}$ are the embedding constants from the embeddings $\Wp{p}\to \Lp{p}$ and $\Wp{p} \to \Lprand{p}$ respectively,
    while $C_\Omega^{\mathcal{H}}$ and $C_{\partial\Omega}^{\mathcal{H}}$ are the embedding constants from the embeddings 
    $\Wp{\mathcal{H}} \to \Lp{r_1}$ and $\Wp{\mathcal{H}} \to \Lprand{r_2}$, respectively.

    Choosing $\eps$ such that $\eps \in \left(0,\frac{1}{C_\Omega^p +C_{\partial\Omega}^p} \right)$ and applying Proposition \ref{proposition_modular_properties2}(iii), (iv) we get the assertion of the proposition with
    \begin{align*}
	\hat{c}=\min \l\{\frac{1}{p} \left[1- \l(C_\Omega^p +C_{\partial\Omega}^p\r)\eps\right],\frac{1}{q}\r\}, \quad \tilde{c}_\eps=\hat{c}_1 \l(C_\Omega^{\mathcal{H}}\r)^{r_1}, \quad  \hat{c}_\eps=\hat{c}_2\l(C_{\partial\Omega}^{\mathcal{H}}\r)^{r_2}.
    \end{align*}
\end{proof}

Now it is easy to show that $u=0$ is a local minimizer of the functionals $\ph_{\pm}$.

\begin{proposition}\label{proposition_local_minimizers}
    Let hypotheses \eqref{condition_poincare} and (H) be satisfied. Then $u=0$ is a local minimizer for both functionals $\ph_{\pm}$.
\end{proposition}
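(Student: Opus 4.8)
The plan is to deduce the local-minimizer property of $\ph_{\pm}$ directly from Proposition~\ref{proposition_auxiliary_result}. First I would recall that $\ph_{\pm}(0)=0$, since $F(x,0)=0$ and $G(x,0)=0$ by the consequence of (H)(iii) noted after the hypotheses. Then, for $u$ with $\|u\|_{1,\mathcal{H}}\le 1$, Proposition~\ref{proposition_auxiliary_result} gives
\begin{align*}
    \ph_{\pm}(u)\ge \hat c\|u\|_{1,\mathcal{H}}^q - \tilde c_\eps\|u\|_{1,\mathcal{H}}^{r_1} - \hat c_\eps\|u\|_{1,\mathcal{H}}^{r_2}
    = \|u\|_{1,\mathcal{H}}^q\Big(\hat c - \tilde c_\eps\|u\|_{1,\mathcal{H}}^{r_1-q} - \hat c_\eps\|u\|_{1,\mathcal{H}}^{r_2-q}\Big).
\end{align*}
The key structural point is that $r_1>q$ and $r_2>q$ by hypothesis (H)(i), so both exponents $r_1-q$ and $r_2-q$ are strictly positive; hence the bracketed expression tends to $\hat c>0$ as $\|u\|_{1,\mathcal{H}}\to 0^+$.

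Next I would make this quantitative: choose $\rho\in(0,1]$ small enough that $\tilde c_\eps\rho^{\,r_1-q} + \hat c_\eps\rho^{\,r_2-q}\le \hat c/2$. Then for every $u$ with $0<\|u\|_{1,\mathcal{H}}\le\rho$ we get $\ph_{\pm}(u)\ge \tfrac{\hat c}{2}\|u\|_{1,\mathcal{H}}^q>0=\ph_{\pm}(0)$, and trivially $\ph_{\pm}(u)\ge 0=\ph_{\pm}(0)$ when $u=0$. Thus $\ph_{\pm}(u)\ge\ph_{\pm}(0)$ for all $u$ in the closed ball of radius $\rho$ around $0$ in $\Wp{\mathcal{H}}$, which is precisely the statement that $u=0$ is a local minimizer of $\ph_{\pm}$ (and, by the same computation using the $\ph$-bound in Proposition~\ref{proposition_auxiliary_result}, of $\ph$ as well, though only $\ph_{\pm}$ is asserted here).

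There is essentially no serious obstacle: the work was already done in Proposition~\ref{proposition_auxiliary_result}, and the present statement is just the observation that a lower bound of the form $c_1 t^q - c_2 t^{r_1} - c_3 t^{r_2}$ with $r_1,r_2>q$ is strictly positive for small $t>0$. The only mild point to be careful about is that we use the first branch of Proposition~\ref{proposition_auxiliary_result} (valid for $\|u\|_{1,\mathcal{H}}\le 1$), so the radius $\rho$ must be taken $\le 1$; and one should note that $\eps$ in Proposition~\ref{proposition_auxiliary_result} has already been fixed in the admissible range $\bigl(0,\tfrac{1}{C_\Omega^p+C_{\partial\Omega}^p}\bigr)$, so the constants $\hat c,\tilde c_\eps,\hat c_\eps$ are genuine positive numbers and the choice of $\rho$ depends only on them.
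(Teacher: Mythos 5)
Your argument is correct and is essentially identical to the paper's own proof: both invoke Proposition \ref{proposition_auxiliary_result} on the unit ball and use $q<r_1,r_2$ to conclude that $\ph_{\pm}(u)>0=\ph_{\pm}(0)$ for all sufficiently small nonzero $u$. Your version merely makes the choice of the radius $\rho$ explicit, which is a harmless refinement.
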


\begin{proof}
    As before, we will show the proof only for the functional $\ph_+$, the proof for $\ph_-$ is working in a similar way. Let $u \in \Wp{\mathcal{H}}$ be such that $\|u\|_{1,\mathcal{H}}<1$. Applying Proposition \ref{proposition_auxiliary_result} gives
    \begin{align*}
	\begin{split}
	    \ph_+(u)
	    \geq \hat{c} \|u\|_{1,\mathcal{H}}^q - \tilde{c}_\eps \|u\|^{r_1}_{1,\mathcal{H}}- \hat{c}_\eps \|u\|_{1,\mathcal{H}}^{r_2}.
	\end{split}
    \end{align*}
    Since $q<r_1, r_2$ there exists $\eta \in (0,1)$ small enough such that
    \begin{align*}
	\ph_+(u) >0=\ph_+(0) \quad\text{for all } u \in \Wp{\mathcal{H}} \text{ with }0<\|u\|_{1,\mathcal{H}}<\eta.
    \end{align*}
    Hence,  $u=0$ is a (strict) local minimizer of $\ph_+$.
\end{proof}

The following proposition is a direct consequence of hypothesis (H)(ii).

\begin{proposition}\label{proposition_unbounded_below}
    Let hypotheses \eqref{condition_poincare} and (H) be satisfied. Then, for $u \in \Wp{\mathcal{H}}$ with $u(x)>0$ for a.\,a.\,$x\in\Omega$, it holds $\ph_{\pm}(tu)\to -\infty$ as $t \to \pm\infty$.
\end{proposition}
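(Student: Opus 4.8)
The plan is to reduce to the case of $\ph_+$ as $t\to+\infty$; the case of $\ph_-$ as $t\to-\infty$ is completely symmetric, because for $t<0$ and $u>0$ a.\,a.\ one has $(tu)^-=-tu=|t|u$, so that $\ph_-(tu)$ takes the same form as below with $t$ replaced by $|t|$ and with $F$, $G$ evaluated at the nonpositive argument $tu$. So fix $u\in\Wp{\mathcal{H}}$ with $u>0$ a.\,a.\ in $\Omega$ and let $t>0$. Then $(tu)^+=tu$, whence
\begin{align*}
    \ph_+(tu)=\frac{t^p}{p}\left(\|\nabla u\|_p^p+\|u\|_p^p\right)+\frac{t^q}{q}\left(\|\nabla u\|_{q,\mu}^q+\|u\|_{q,\mu}^q\right)-\into F(x,tu)\,dx-\intor G(x,tu)\,d\sigma ,
\end{align*}
and the highest power of $t$ occurring in the ``elastic'' part on the right-hand side is $t^q$; the whole point is to beat it using the $(q-1)$-superlinearity of $f$ in (H)(ii).

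The first step I would carry out is the elementary growth estimate: for every $M>0$ there is $C_M>0$ with
\begin{align*}
    F(x,s)\geq M|s|^q-C_M\ \text{ for a.\,a.\,}x\in\Omega
    \quad\text{and}\quad
    G(x,s)\geq M|s|^q-C_M\ \text{ for a.\,a.\,}x\in\partial\Omega ,
\end{align*}
for all $s\in\R$. Indeed, (H)(ii) furnishes $\delta>0$ such that $f(x,s)s\geq qM|s|^q$ whenever $|s|\geq\delta$; integrating the primitive from $\pm\delta$ and using (H)(i) to bound $F(x,\cdot)$ uniformly in $x$ on the compact interval $[-\delta,\delta]$ yields the bound for $F$, and the same reasoning applies to $g$ and $G$. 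The second step is to note that, by Proposition \ref{proposition_embeddings}(ii) and $q<p^*$, we have $\Wp{\mathcal{H}}\hookrightarrow\Lp{q}$, hence $u\in\Lp{q}$, and since $u>0$ a.\,a.\ in $\Omega$ this gives $\|u\|_q^q>0$.

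Substituting the estimate into the displayed identity and using $G(x,tu)\geq -C_M$ leads, for all $t>0$, to
\begin{align*}
    \ph_+(tu)\leq \frac{t^p}{p}\left(\|\nabla u\|_p^p+\|u\|_p^p\right)+\frac{t^q}{q}\left(\|\nabla u\|_{q,\mu}^q+\|u\|_{q,\mu}^q\right)-Mt^q\|u\|_q^q+C_M\big(|\Omega|_N+\sigma(\partial\Omega)\big) .
\end{align*}
Now I would choose $M$ so large that $M\|u\|_q^q>\frac{1}{q}\big(\|\nabla u\|_{q,\mu}^q+\|u\|_{q,\mu}^q\big)+1$, so that the coefficient of $t^q$ is strictly negative; since $p<q$, the right-hand side tends to $-\infty$ as $t\to+\infty$, which gives $\ph_+(tu)\to-\infty$, and the statement for $\ph_-$ follows from the reduction. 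I do not expect any genuine obstacle: the proposition is essentially an immediate consequence of (H)(ii), the only mildly technical point being the uniform-in-$x$ lower bound for $F$ and $G$ near $s=0$, which is precisely what (H)(i) supplies.
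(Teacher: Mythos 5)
Your argument is correct, and it is exactly the reasoning the paper has in mind: the paper states this proposition without proof ("a direct consequence of hypothesis (H)(ii)"), and the lower bound $F(x,s)\geq M|s|^q-C_M$, $G(x,s)\geq M|s|^q-C_M$ that you derive from (H)(i)--(ii) is precisely the estimate \eqref{n10} the authors establish and use for the analogous purpose in the proof of Proposition \ref{proposition_nodal_2}. Your choice of $M$ large enough to make the coefficient of $t^q$ negative, combined with $p<q$ and $\|u\|_q^q>0$, correctly completes the details the paper omits.
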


Now we are ready to prove the existence of bounded constant sign solutions for problem \eqref{problem}.

\begin{proposition}\label{proposition_constant_sign_solutions}
    Let hypotheses \eqref{condition_poincare} and (H) be satisfied. Then problem \eqref{problem} has at least two nontrivial constant sign solutions $u_0, v_0\in \Wp{\mathcal{H}}\cap \Linf$ such that
	\begin{align*}
		u_0(x) \geq 0 \quad\text{and}\quad v_0(x) \leq 0 \quad \text{for a.\,a.\,} x\in \Omega.
	\end{align*}
\end{proposition}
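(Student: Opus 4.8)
The plan is to obtain the two constant sign solutions as global minimizers of the truncated functionals $\ph_{\pm}$ restricted to small balls, combined with a mountain pass argument; but since $u=0$ is already a strict local minimizer (Proposition \ref{proposition_local_minimizers}) and $\ph_\pm$ is unbounded below along positive rays (Proposition \ref{proposition_unbounded_below}), the cleanest route is the mountain pass theorem. I would first treat $\ph_+$. By Proposition \ref{proposition_local_minimizers} there is $\eta\in(0,1)$ and $\rho\in(0,\eta)$ with $\inf\{\ph_+(u):\|u\|_{1,\mathcal H}=\rho\}=:m_\rho>0=\ph_+(0)$. Fixing any $w\in\Wp{\mathcal H}$ with $w(x)>0$ for a.\,a.\,$x\in\Omega$, Proposition \ref{proposition_unbounded_below} gives $\ph_+(tw)\to-\infty$ as $t\to+\infty$, so for $t_*>0$ large enough $u_2:=t_*w$ satisfies $\|u_2\|_{1,\mathcal H}>\rho$ and $\ph_+(u_2)<0=\ph_+(u_1)$ with $u_1:=0$. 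Since $\ph_+$ satisfies the Cerami ($C$-)condition by Proposition \ref{proposition_cerami}, Theorem \ref{theorem_mountain_pass} applies and yields $u_0\in\Wp{\mathcal H}$ with $\ph_+'(u_0)=0$ and $\ph_+(u_0)\geq m_\rho>0$; in particular $u_0\neq 0$.

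Next I would check that this critical point of $\ph_+$ is an actual nonnegative weak solution of \eqref{problem}. Testing $\ph_+'(u_0)=0$ with $v=-u_0^-\in\Wp{\mathcal H}$ and using that $F(x,\pm u^\pm)$, $G(x,\pm u^\pm)$ do not see the negative part, one gets
\begin{align*}
\|\nabla u_0^-\|_p^p+\|\nabla u_0^-\|_{q,\mu}^q+\|u_0^-\|_p^p+\|u_0^-\|_{q,\mu}^q=0,
\end{align*}
hence $\hat\rho_{\mathcal H}(u_0^-)=0$ and so $u_0^-=0$ by Proposition \ref{proposition_modular_properties2}(v), i.\,e.\ $u_0\geq 0$. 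Then $\pm u_0^\pm=u_0$ and $\ph_+'(u_0)=\ph'(u_0)$, so $u_0\in K_\ph$ and is a nonnegative weak solution of \eqref{problem}. Finally, hypothesis (H)(i) puts $f(\cdot,u_0(\cdot))$ and $g(\cdot,u_0(\cdot))$ in the growth class required by H($h_1,h_2$) (with $h_1(x,s,\xi)=f(x,s)-|s|^{p-2}s-\mu(x)|s|^{q-2}s$ and $h_2=g$, noting the extra absorption terms only improve matters and are themselves of admissible growth since $q<p^*$), so Theorem \ref{theorem_apriori} gives $u_0\in\Linf$.

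For $v_0$ I would repeat the argument verbatim with $\ph_-$ in place of $\ph_+$: Propositions \ref{proposition_cerami}, \ref{proposition_local_minimizers}, \ref{proposition_unbounded_below} all cover $\ph_-$, so the mountain pass theorem produces a nonzero critical point $v_0$ of $\ph_-$ with $\ph_-(v_0)>0$; testing $\ph_-'(v_0)=0$ with $v=v_0^+$ forces $v_0^+=0$, so $v_0\leq 0$, whence $\ph_-'(v_0)=\ph'(v_0)$ and $v_0$ is a nonpositive weak solution of \eqref{problem}, again in $\Linf$ by Theorem \ref{theorem_apriori}. Since $\ph_\pm(u_0),\ph_\pm(v_0)>0=\ph_\pm(0)$, both solutions are nontrivial, which completes the proof.

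I expect the only genuinely delicate point to be the verification that the mountain pass critical points of the truncated functionals are critical points of $\ph$ itself with the claimed sign — that is, the $\pm u^\pm$ bookkeeping and the sign test — everything else being a direct citation of the propositions already established. A secondary check is making sure the absorption terms $-|u|^{p-2}u-\mu(x)|u|^{q-2}u$ on the right-hand side of \eqref{problem} fit the framework of Theorem \ref{theorem_apriori}; since $q<p^*$ and $\mu\in\Linf$, these are of the admissible form in H($h_1,h_2$), so the a priori bound applies without change.
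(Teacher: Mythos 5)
Your proposal is correct and follows essentially the same route as the paper: mountain pass geometry from the strict local minimum at $0$ together with Proposition \ref{proposition_unbounded_below}, the Cerami condition from Proposition \ref{proposition_cerami}, the test function $v=\mp u^{\mp}$ to fix the sign, and Theorem \ref{theorem_apriori} for the $L^\infty$-bound. The only cosmetic difference is that the paper invokes a cited abstract result to pass from ``$0$ is a local minimizer'' to a positive infimum on a small sphere, whereas this follows just as well from the quantitative lower bound of Proposition \ref{proposition_auxiliary_result}, which is what your argument implicitly uses.
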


\begin{proof}
    From Propositions \ref{proposition_local_minimizers} and Papageorgiou-R\u{a}dulescu-Repov\v{s} \cite[Theorem 5.7.6]{Papageorgiou-Radulescu-Repovs-2019} there exist $\eta_{\pm} \in (0,1)$ small enough such that
    \begin{align}\label{const1}
	\ph_{\pm}(0)=0<\inf\left\{\ph_{\pm}(0): \|u\|_{1,\mathcal{H}}=\eta_{\pm}\right\}=m_{\pm}.
    \end{align}
    By \eqref{const1} and the Propositions \ref{proposition_cerami} and \ref{proposition_unbounded_below} we are able to use the mountain pass theorem (see Theorem \ref{theorem_mountain_pass}) which implies the existence of $u_0, v_0 \in \Wp{\mathcal{H}}$ such that $u_0 \in K_{\ph_+}, \ v_0 \in K_{\ph_-}$ and
    \begin{align*}
	\ph_+(0)=0<m_+ \leq \ph_+(u_0)
	\quad \text{as well as}\quad
	\ph_-(0)=0<m_-\leq \ph_-(v_0).
    \end{align*}
    This shows that $u_0 \neq 0$ and $v_0\neq 0$. Moreover, we have $\ph_+'(u_0)=0$ which means that
    \begin{align*}
	&\into \left(|\nabla u_0|^{p-2}\nabla u_0+\mu(x)|\nabla u_0|^{q-2}\nabla u_0 \right)\cdot\nabla v \,dx +\into \left(|u_0|^{p-2} u_0+\mu(x)|u_0|^{q-2}u_0 \right)v \,dx\\
	&=\into f(x,u_0^+) v \,dx+\int_{\partial\Omega}g(x,u_0^+)v\,d\sigma
    \end{align*}
    for all $v \in \Wp{\mathcal{H}}$. Choosing $v=-u_0^-\in \Wp{\mathcal{H}}$ we obtain
    \begin{align*}
	\hat{\rho}_{\mathcal{H}}(u_0^-)=0
    \end{align*}
    and so, by Proposition \ref{proposition_modular_properties2}, we have
    \begin{align*}
	\|u_0^-\|_{1,\mathcal{H}}=0.
    \end{align*}
    Therefore, $u_0 \geq 0, u_0\neq 0$. In the same way we can show that $v_0 \leq 0, v_0 \neq 0$. Finally, by applying Theorem \ref{theorem_apriori}, we have that $u_0, v_0 \in \Linf$.
\end{proof}

\section{Sign changing solution}\label{section_5}

In this section we are interested in the existence of a sign-changing solution of problem \eqref{problem}. Following the treatment of Liu-Wang-Wang \cite{Liu-Wang-Wang-2004} and Gasi\'nski-Papageorgiou \cite{Gasinski-Papageorgiou-2019} we introduce the so-called Nehari manifold for the functional $\ph$ which is defined by
\begin{align*}
    N=\Big\{u \in \Wp{\mathcal{H}}: \lan \ph'(u),u\ran=0, \ u\neq 0\Big\}.
\end{align*}
Since we are interested in sign-changing solutions, we also need the following set
\begin{align*}
    N_0=\Big\{u \in \Wp{\mathcal{H}}: u^+ \in N, \ -u^-\in N \Big\}.
\end{align*}

\begin{proposition}\label{proposition_nodal_1}
    Let hypotheses \eqref{condition_poincare} and (H) be satisfied. Let $u \in \Wp{\mathcal{H}}$, $u \neq 0$, then there exists a unique $t_0=t_0(u)>0$ such that $t_0u \in N$.
\end{proposition}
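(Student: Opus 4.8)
The plan is to study the fibering map $\psi_u\colon[0,\infty)\to\R$ given by $\psi_u(t)=\ph(tu)$. Since $\ph\in C^1(\Wp{\mathcal{H}})$, we have $\psi_u\in C^1$ with $\psi_u'(t)=\lan\ph'(tu),u\ran$, and hence $\lan\ph'(tu),tu\ran=t\,\psi_u'(t)$ for all $t>0$. Consequently $tu\in N$ if and only if $t>0$ and $\psi_u'(t)=0$, so the statement reduces to proving that $\psi_u'$ has exactly one zero on $(0,\infty)$.

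For existence I would argue as follows. We have $\psi_u(0)=0$, and by Proposition \ref{proposition_auxiliary_result}, for $t>0$ with $\|tu\|_{1,\mathcal{H}}\le1$,
\begin{align*}
\psi_u(t)=\ph(tu)\ge\hat{c}\,\|tu\|_{1,\mathcal{H}}^q-\tilde{c}_\eps\,\|tu\|_{1,\mathcal{H}}^{r_1}-\hat{c}_\eps\,\|tu\|_{1,\mathcal{H}}^{r_2},
\end{align*}
which is strictly positive for $t$ small since $q<r_1$ and $q<r_2$. On the other hand, from (H)(ii) one gets $F(x,s)/|s|^q\to+\infty$ and $G(x,s)/|s|^q\to+\infty$ as $s\to\pm\infty$, while (H)(i) and (H)(ii) together furnish a constant $M>0$ with $F(x,\cdot)\ge-M$ on $\Omega$ and $G(x,\cdot)\ge-M$ on $\partial\Omega$ (cf.\ \eqref{c11}). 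Since $u\neq0$, the set $\{x\in\Omega:u(x)\neq0\}$ has positive Lebesgue measure, so Fatou's Lemma gives $t^{-q}\into F(x,tu)\,dx\to+\infty$ as $t\to+\infty$, and hence $\psi_u(t)\to-\infty$. Being continuous on $[0,\infty)$, positive somewhere and tending to $-\infty$, the function $\psi_u$ attains its positive maximum at some interior point $t_0\in(0,\infty)$, where $\psi_u'(t_0)=0$, i.e.\ $t_0u\in N$.

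For uniqueness, suppose $0<t_1<t_2$ with $t_1u,t_2u\in N$. Write $\rho_p(u)=\|\nabla u\|_p^p+\|u\|_p^p$, which is strictly positive because $u\neq0$, and $\rho_q(u)=\|\nabla u\|_{q,\mu}^q+\|u\|_{q,\mu}^q$. Expanding $\lan\ph'(t_iu),t_iu\ran=0$ and dividing by $t_i^q$ gives, for $i=1,2$,
\begin{align*}
t_i^{p-q}\rho_p(u)+\rho_q(u)=\into|u|^q\,\sgn(u)\,\frac{f(x,t_iu)}{|t_iu|^{q-1}}\,dx+\intor|u|^q\,\sgn(u)\,\frac{g(x,t_iu)}{|t_iu|^{q-1}}\,d\sigma,
\end{align*}
where all integrals are finite by (H)(i) and the embeddings in Proposition \ref{proposition_embeddings}, and the integrands are understood to vanish where $u=0$. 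Subtracting the identity for $i=2$ from that for $i=1$, the left-hand side equals $(t_1^{p-q}-t_2^{p-q})\rho_p(u)$, which is strictly positive since $p<q$. For the right-hand side, hypothesis (H)(v) shows that on $\{u>0\}$ the integrand $\frac{u^qf(x,tu)}{(tu)^{q-1}}$ is strictly increasing in $t$, while on $\{u<0\}$ the integrand $-\frac{|u|^qf(x,tu)}{|tu|^{q-1}}$ is also strictly increasing in $t$ (there $tu<0$ decreases as $t$ grows and $f(x,\cdot)/|\cdot|^{q-1}$ is increasing on $(-\infty,0)$); the same monotonicity holds for the boundary term. Since $\{u\neq0\}$ has positive measure, the right-hand side of the subtraction is therefore strictly negative, contradicting the positivity of the left-hand side. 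Hence $t_1=t_2$.

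I expect the uniqueness part to be the delicate point: one must split $\Omega$ (and $\partial\Omega$) into the regions where $u$ is positive and where it is negative, invoke the strict monotonicity in (H)(v) on each region with the correct sign, and verify that all the integrals involved are finite, which follows from the growth condition (H)(i) and Proposition \ref{proposition_embeddings}. The existence part is comparatively routine once Proposition \ref{proposition_auxiliary_result} and the superlinearity (H)(ii) are available.
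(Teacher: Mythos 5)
Your proposal is correct and follows essentially the same route as the paper: the uniqueness argument --- dividing the Nehari identity $\langle\varphi'(tu),tu\rangle=0$ by $t^q$ and exploiting the strict monotonicity in (H)(v) separately on $\{u>0\}$ and $\{u<0\}$ --- is exactly the paper's, written out in more detail. The only (harmless) variation is in the existence step, where you locate an interior maximum of $t\mapsto\varphi(tu)$ using Proposition \ref{proposition_auxiliary_result} and (H)(ii), while the paper works directly with $\zeta_u(t)=\langle\varphi'(tu),u\rangle$, showing it is positive for small $t$ (via (H)(v)) and negative for large $t$, and then applies the intermediate value theorem.
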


\begin{proof}
    Let $\zeta_u\colon (0,+\infty)\to \R$ be defined by
    \begin{align}\label{n5}
	\begin{split}
	    \zeta_u(t)
	    &=\l\lan \ph'(tu),u\r\ran \\
	    & =t^{p-1}\|\nabla u\|_p^p+t^{q-1}\|\nabla u\|_{q,\mu}^q+t^{p-1}\|u\|_p^p+t^{q-1}\| u\|_{q,\mu}^q\\
	    & \quad -\into f(x,tu)u\,dx-\int_{\partial \Omega} g(x,tu)u\,d\sigma.
	\end{split}
    \end{align}
    By hypothesis (H)(v) we have for $t\in (0,1)$ and $|u(x)|>0$
    \begin{align*}
	& \frac{f(x,tu)(tu)}{t^q|u|^q} \leq \frac{f(x,u)u}{|u|^q} \quad \text{for a.\,a.\,}x\in \Omega,\\
	& \frac{g(x,tu)(tu)}{t^q|u|^q} \leq \frac{g(x,u)u}{|u|^q} \quad \text{for a.\,a.\,}x\in \partial\Omega,
    \end{align*}
    which implies
    \begin{align}\label{n4}
	\begin{split}
	    &f(x,tu)u \leq t^{q-1}f(x,u)u \quad \text{for a.\,a.\,}x\in \Omega,\\
	    &g(x,tu)u \leq t^{q-1}g(x,u)u \quad \text{for a.\,a.\,}x\in \partial\Omega.
	\end{split}
    \end{align}
    From \eqref{n5} and \eqref{n4} we obtain
    \begin{align*}
	\begin{split}
	    \zeta_u(t)
	    &\geq t^{p-1}\|\nabla u\|_p^p+t^{p-1}\|u\|_p^p\\
	    & \quad -t^{q-1}\into f(x,u)u\,dx-t^{q-1}\int_{\partial \Omega} g(x,u)u\,d\sigma.
	\end{split}
    \end{align*}
    Therefore, since $p<q$,
    \begin{align}\label{n6}
	\zeta_u(t) >0 \quad\text{for small }t\in (0,1).
    \end{align}

    On the other hand, we have for $t>0$
    \begin{align}\label{n7}
	\begin{split}
	    \frac{\zeta_u(t)}{t^{q-1}}
	    &=\frac{1}{t^{q-p}}\|\nabla u\|_p^p+\|\nabla u\|_{q,\mu}^q+\frac{1}{t^{q-p}}\| u\|_p^p+\|u\|_{q,\mu}^q\\
	    &\quad -\into \frac{f(x,tu)}{t^{q-1}}u\,dx-\int_{\partial\Omega} \frac{g(x,tu)}{t^{q-1}}u\,d\sigma.
	\end{split}
    \end{align}
    Applying hypothesis (H)(ii) and passing to the limit in \eqref{n7} as $t\to +\infty$ gives
    \begin{align*}
	\lim_{t\to +\infty }\frac{\zeta_u(t)}{t^{q-1}}=-\infty,
    \end{align*}
    as $p<q$. Hence
    \begin{align}\label{n8}
	\zeta_u(t) <0 \quad\text{for }t>0 \text{ large enough}.
    \end{align}
    Then, from \eqref{n6}, \eqref{n8} and the intermediate value theorem there exists $t_0=t_0(u)>0$ such that
    \begin{align*}
	\zeta_u(t_0)=0,
    \end{align*}
    which implies
    \begin{align*}
	\l\lan \ph'(t_0u),t_0u\r\ran =0.
    \end{align*}
    Hence
    \begin{align*}
	t_0u \in N.
    \end{align*}
    Note that equation $\zeta_u(t)=0$ can be equivalently written as
    \begin{align*}
	-\|\nabla u\|_{q,\mu}^q-\|u\|_{q,\mu}^q
	&=\frac{1}{t^{q-p}}\|\nabla u\|_p^p+\frac{1}{t^{q-p}}\| u\|_p^p\\
	&\quad -\into \frac{f(x,tu)(tu)}{t^{q}}\,dx-\int_{\partial\Omega} \frac{g(x,tu)(tu)}{t^{q}}\,d\sigma.
    \end{align*}
    The right-hand side of this inequality is strictly increasing in $t>0$. Therefore, there exists a unique $t_0=t_0(u)$ such that
    \begin{align*}
	\zeta_u(t_0)=0.
    \end{align*}
\end{proof}

\begin{proposition}\label{proposition_nodal_2}
    Let hypotheses \eqref{condition_poincare} and (H) be satisfied. Let $u \in N$, then $\ph(tu) \leq \ph(u)$ for all $t>0$ (with strict inequality when $t\ne 1$).
\end{proposition}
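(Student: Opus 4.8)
The plan is to reduce the assertion to a one-variable calculus fact about the map $m_u\colon(0,\infty)\to\R$, $m_u(t):=\ph(tu)$. Since $\ph\in C^1(\Wp{\mathcal{H}})$, $m_u$ is of class $C^1$ and the chain rule gives $m_u'(t)=\lan\ph'(tu),u\ran=\zeta_u(t)$, where $\zeta_u$ is the function defined in \eqref{n5}. As $u\in N$ we have $m_u'(1)=\zeta_u(1)=\lan\ph'(u),u\ran=0$. It therefore suffices to show that $m_u'(t)>0$ for $t\in(0,1)$ and $m_u'(t)<0$ for $t>1$: this forces $m_u$ to be strictly increasing on $(0,1]$ and strictly decreasing on $[1,\infty)$, hence $\ph(tu)=m_u(t)\le m_u(1)=\ph(u)$ for all $t>0$, with strict inequality whenever $t\neq 1$.

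To determine the sign of $\zeta_u(t)$, I would divide by $t^{q-1}>0$ and use the representation \eqref{n7}, which can be rewritten as
\begin{align*}
    \frac{\zeta_u(t)}{t^{q-1}}
    &=\frac{\|\nabla u\|_p^p+\|u\|_p^p}{t^{q-p}}+\|\nabla u\|_{q,\mu}^q+\|u\|_{q,\mu}^q\\
    &\quad-\into \frac{f(x,tu)\,(tu)}{t^{q}}\,dx-\intor \frac{g(x,tu)\,(tu)}{t^{q}}\,d\sigma,
\end{align*}
and then argue that the right-hand side is strictly decreasing in $t$ on $(0,\infty)$. Indeed, since $q>p$ and $u\neq 0$ (so $\|u\|_p^p>0$), the first summand is strictly decreasing; the two $\|\cdot\|_{q,\mu}$-terms do not depend on $t$; and, by hypothesis (H)(v), for a.\,a.\,$x$ with $u(x)\neq 0$ the map
\[
    t\mapsto \frac{f(x,tu(x))\,(tu(x))}{t^{q}}=|u(x)|^{q}\,\frac{f(x,tu(x))\,(tu(x))}{|tu(x)|^{q}}
\]
is strictly increasing on $(0,\infty)$ — this is the pointwise monotonicity underlying \eqref{n4}, now used for all $t>0$ and verified separately on $\{u>0\}$ and on $\{u<0\}$ — while on $\{u=0\}$ the integrand vanishes; the boundary integrand is handled in exactly the same way. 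Hence $t\mapsto\zeta_u(t)/t^{q-1}$ is strictly decreasing.

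Since this strictly decreasing function vanishes at $t=1$, it is positive on $(0,1)$ and negative on $(1,\infty)$; multiplying by $t^{q-1}>0$ yields $m_u'(t)=\zeta_u(t)>0$ for $t\in(0,1)$ and $m_u'(t)<0$ for $t>1$, as required. All integrals above are finite thanks to the growth condition (H)(i).

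The only genuinely delicate point is the pointwise monotonicity drawn from (H)(v): the change of variable $s=tu(x)$ must be carried out separately on $\{u>0\}$ and on $\{u<0\}$, since on the latter set increasing $t$ pushes $s$ deeper into $(-\infty,0)$, and one has to check that the sign factors combine so that $t\mapsto t^{-q}f(x,tu)(tu)$ is still increasing there. Everything else is a routine monotonicity and sign computation.
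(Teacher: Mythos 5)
Your proof is correct, and it takes a genuinely more direct route than the paper's. The paper also works with $k_u(t)=\ph(tu)$ and uses $u\in N$ to get $k_u'(1)=0$, but it then shows (via the lower bounds of hypotheses (H)(i), (ii) and Proposition \ref{proposition_auxiliary_result}) that $k_u$ is positive for small $t>0$ and negative for large $t$, so that $k_u$ attains an interior global maximum at a critical point, which must be $t=1$ by the uniqueness part of Proposition \ref{proposition_nodal_1}. You instead bypass the asymptotic analysis entirely: you observe that $t\mapsto\zeta_u(t)/t^{q-1}$ is strictly decreasing — the $p$-terms because $q>p$ and $\|u\|_p>0$, the reaction terms by the pointwise monotonicity encoded in (H)(v), checked separately on $\{u>0\}$ and $\{u<0\}$ — and since it vanishes at $t=1$ you read off the sign of $k_u'$ on each side of $1$. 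This is exactly the monotonicity computation the paper performs at the end of the proof of Proposition \ref{proposition_nodal_1} to get uniqueness of $t_0$ (where, incidentally, the paper's ``strictly increasing'' should read ``strictly decreasing''), so you are reusing the same key fact but deploying it to control the derivative directly. What your approach buys is economy — no need for (H)(ii), for the bounds \eqref{n10}, or for Proposition \ref{proposition_auxiliary_result} — and it yields the strict inequality for $t\neq1$ transparently, whereas the paper's argument delivers strictness only implicitly (a second maximizer would be a second critical point, contradicting uniqueness). Your treatment of the set $\{u<0\}$, where increasing $t$ moves $s=tu(x)$ leftward and the sign factor $\sgn(s)$ flips the monotonicity back, is the one genuinely delicate step and you handle it correctly.
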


\begin{proof}
    Let $k_u\colon (0,\infty)\to \R$ be defined by
    \begin{align*}
	k_u(t)=\ph(tu) \quad\text{for all }t>0.
    \end{align*}
    Because $u\in N$, it holds
    \begin{align}\label{n9}
	k_u'(1)=0,
    \end{align}
    which is, due to Proposition \ref{proposition_nodal_1}, the unique critical point of $k_u$. From hypotheses (H)(i), (ii), there exists, for any given $\tau>0$, a constant $c_\tau>0$ such that
    \begin{align}\label{n10}
	\begin{split}
	    & F(x,s) \geq \frac{\tau}{q} |s|^q-c_\tau\quad\text{for a.\,a.\,}x\in\Omega \text{ and all }s\in\R,\\
	    & G(x,s) \geq \frac{\tau}{q} |s|^q -c_\tau\quad\text{for a.\,a.\,}x\in\partial \Omega \text{ and all }s\in\R.
	\end{split}
    \end{align}
    Taking \eqref{n10} into account, we have for $t>0$
    \begin{align*}
	k_u(t)
	& = \ph(tu)\\
	& \leq \frac{t^p}{p} \|\nabla u\|_p^p +\frac{t^q}{q} \|\nabla u\|_{q,\mu}^q+\frac{t^p}{p} \|u\|_p^p +\frac{t^q}{q} \| u\|_{q,\mu}^q\\
	& \quad -\frac{\tau t^q}{q}\|u\|_q^q-\frac{\tau t^q}{q}\|u\|_{q,\partial\Omega}^q+c_\tau\l(|\Omega|_N+|\partial\Omega|_N\r)\\
	& = \frac{t^p}{p} \left(\|\nabla u\|_p^p+\|u\|_p^p\right)+\frac{t^q}{q}\left(\|\nabla u\|_{q,\mu}^q+\| u\|_{q,\mu}^q-\tau \left(\|u\|_q^q+ \|u\|_{q,\partial\Omega}^q\right)\right)\\
	&\quad +c_\tau\l(|\Omega|_N+|\partial\Omega|_N\r).
    \end{align*}
    Taking $\tau$ large enough we have
    \begin{align*}
	\ph(tu) \leq c_3 t^p-c_4t^q
    \end{align*}
    for some $c_3, c_4>0$. Since $p<q$ we obtain
    \begin{align}\label{n11}
	k_u(t) =\ph(tu)<0 \quad\text{for $t>0$ large enough}.
    \end{align}

    Applying Proposition \ref{proposition_auxiliary_result}, for $t>0$ small enough we obtain
    \begin{align*}
	k_u(t)& =\ph(tu)\\
	& \geq \hat{c} \|u\|_{1,\mathcal{H}}^q - \tilde{c}_\eps \|u\|^{r_1}_{1,\mathcal{H}}- \hat{c}_\eps \|u\|_{1,\mathcal{H}}^{r_2}\\
	& = c_5t^q-c_6t^{r_1}-c_7t^{r_2}
    \end{align*}
    for some $c_5, c_6, c_7>0$. Since $q<r_1,r_2$ we conclude that
    \begin{align}\label{n12}
	k_u(t) =\ph(tu)>0 \quad\text{for $t>0$ small enough}.
    \end{align}
    From \eqref{n11} and \eqref{n12} we know that there exists a local minimizer $t_0(u)>0$ of $k_u$. Since $t=1$ is the only critical point of $k_u$, see \eqref{n9}, we have that $t_0(u)=1$ which is a global minimizer of $k_u$. Hence, we have
    \begin{align*}
	k_u(t) \leq k_u(1) \quad\text{for all }t > 0
    \end{align*}
    and so
    \begin{align*}
	\ph(tu) \leq \ph(u) \quad\text{for all }t > 0.
    \end{align*}
\end{proof}

\begin{proposition}\label{proposition_coercive}
    Let hypotheses \eqref{condition_poincare} and (H) be satisfied. Then the functional $\ph\big|_N$ is coercive.
\end{proposition}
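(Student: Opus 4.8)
The plan is to argue by contradiction, combining the Nehari identity, the fibering–map inequality of Proposition \ref{proposition_nodal_2}, and the superlinearity (H)(ii). Suppose $\ph\big|_N$ is not coercive; then there are $u_n\in N$ with $\|u_n\|_{1,\mathcal{H}}\to+\infty$ and a constant $M>0$ such that, along a subsequence, $\ph(u_n)\le M$ for all $n$.

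First I would exploit the constraint $u_n\in N$, i.e.\ $\lan\ph'(u_n),u_n\ran=0$, which reads $\hat{\rho}_{\mathcal{H}}(u_n)=\into f(x,u_n)u_n\,dx+\intor g(x,u_n)u_n\,d\sigma$. Since $f(x,0)=g(x,0)=0$, hypothesis (H)(iv) yields $qF(x,s)\le f(x,s)s$ and $qG(x,s)\le g(x,s)s$ for all $s\in\R$, and inserting these into the definition of $\ph$ gives
\[
    \ph(u_n)\ge\Big(\frac1p-\frac1q\Big)\big(\|\nabla u_n\|_p^p+\|u_n\|_p^p\big).
\]
Hence $\|\nabla u_n\|_p^p+\|u_n\|_p^p\le M'$ for some $M'>0$: the $W^{1,p}(\Omega)$‑part of $u_n$ remains bounded. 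Therefore, setting $y_n:=u_n/\|u_n\|_{1,\mathcal{H}}$ (so $\|y_n\|_{1,\mathcal{H}}=1$, hence $\hat{\rho}_{\mathcal{H}}(y_n)=1$ by Proposition \ref{proposition_modular_properties2}), we obtain $\|\nabla y_n\|_p^p+\|y_n\|_p^p\le M'/\|u_n\|_{1,\mathcal{H}}^p\to 0$, so $y_n\to 0$ in $W^{1,p}(\Omega)$ and, by the classical Sobolev and trace embeddings (recall $r_1<p^*$, $r_2<p_*$), also $y_n\to 0$ in $\Lp{r_1}$ and in $\Lprand{r_2}$.

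The second step is to test $\ph$ along the rays through $u_n$. Fix $\tau>0$. Applying Proposition \ref{proposition_nodal_2} to $u_n\in N$ with $t=\tau/\|u_n\|_{1,\mathcal{H}}>0$ gives $\ph(\tau y_n)\le\ph(u_n)\le M$ for all $n$. On the other hand, writing $a_n=\|\nabla y_n\|_p^p+\|y_n\|_p^p$ and $b_n=\|\nabla y_n\|_{q,\mu}^q+\|y_n\|_{q,\mu}^q$ (so $a_n+b_n=\hat{\rho}_{\mathcal{H}}(y_n)=1$),
\begin{align*}
    \ph(\tau y_n)
    &= \frac{\tau^p}{p}a_n+\frac{\tau^q}{q}b_n-\into F(x,\tau y_n)\,dx-\intor G(x,\tau y_n)\,d\sigma\\
    &\ge \min\Big\{\tfrac{\tau^p}{p},\tfrac{\tau^q}{q}\Big\}-\into F(x,\tau y_n)\,dx-\intor G(x,\tau y_n)\,d\sigma.
\end{align*}
By the growth estimate in (H)(i) one has $|F(x,s)|\le c(|s|+|s|^{r_1})$ and likewise for $G$, so, $\tau$ being fixed and $y_n\to 0$ in $\Lp{r_1}$ and $\Lprand{r_2}$, the last two integrals tend to $0$. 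Passing to the limit, $\min\{\tau^p/p,\tau^q/q\}\le M$ for every $\tau>0$, which is impossible for $\tau$ large. This contradiction proves that $\ph\big|_N$ is coercive.

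I expect the only genuinely delicate point to be the first step, namely observing that the energy bound $\ph(u_n)\le M$ together with the Nehari identity already forces the $W^{1,p}(\Omega)$‑component of $u_n$ to stay bounded, so that the rescaled sequence $y_n$ vanishes in $W^{1,p}(\Omega)$ and hence in the subcritical domain and boundary Lebesgue spaces. This is precisely what rules out, with no extra argument, the alternative "$y_n\weak y\neq 0$" that had to be handled as a separate case in the proof of Proposition \ref{proposition_cerami}. Everything else reduces to routine applications of Propositions \ref{proposition_modular_properties2} and \ref{proposition_nodal_2} and of the embedding results.
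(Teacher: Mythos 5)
Your argument is correct, but it is organized differently from the paper's. The paper rescales $y_n=u_n/\|u_n\|_{1,\mathcal{H}}$, extracts a weak limit $y$, and then treats two cases: if $y=0$ it runs essentially your second step (using Proposition \ref{proposition_nodal_2} and the compact embeddings to push $\ph(ty_n)$ above any prescribed level), while if $y\neq 0$ it divides the energy by $\|u_n\|_{1,\mathcal{H}}^q$ and invokes the superlinearity (H)(ii) together with a Fatou-type argument to force $\ph(u_n)/\|u_n\|_{1,\mathcal{H}}^q\to-\infty$, contradicting $\ph(u_n)\ge 0$. Your first step removes this dichotomy altogether: combining the Nehari identity $\hat{\rho}_{\mathcal{H}}(u_n)=\into f(x,u_n)u_n\,dx+\intor g(x,u_n)u_n\,d\sigma$ with the Ambrosetti--Rabinowitz-type consequence $qF(x,s)\le f(x,s)s$, $qG(x,s)\le g(x,s)s$ of (H)(iv) (valid since the relevant functions vanish at $s=0$ and are monotone on each half-line), the $q$-terms cancel exactly and you get $\ph(u_n)\ge(\frac1p-\frac1q)(\|\nabla u_n\|_p^p+\|u_n\|_p^p)$, so the $\Wp{p}$-part of $u_n$ is bounded and $y_n\to 0$ \emph{strongly} in $\Wp{p}$, hence in $\Lp{r_1}$ and $\Lprand{r_2}$. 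This is a clean and correct use of the constraint that the paper's coercivity proof does not exploit (though the same mechanism appears in its Cerami-condition argument), and it buys you a shorter proof that avoids both the case $y\neq 0$ and the Fatou argument; the remaining step, bounding $\ph(\tau y_n)$ below by $\min\{\tau^p/p,\tau^q/q\}\,\hat{\rho}_{\mathcal{H}}(y_n)=\min\{\tau^p/p,\tau^q/q\}$ minus vanishing potential terms and letting $\tau\to\infty$, matches the paper's Case $y=0$. No gaps.
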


\begin{proof}
    It is enough to show that if $\{u_n\}_{n \geq 1} \subseteq N$ and
    \begin{align}\label{n1}
	\ph(u_n) \leq M_9 \quad \text{for all }n\in\N
    \end{align}
    for some $M_9>0$, then the sequence $\{u_n\}_{n \geq 1} \subseteq \Wp{\mathcal{H}}$ is bounded.

    Supposing the opposite we can assume that $\|u_n\|_{1,\mathcal{H}}\to+\infty$. Letting $y_n=\frac{u_n}{\|u_n\|_{1,\mathcal{H}}}$ we can assume that $y_n\weak y$ in $\Wp{\mathcal{H}}$. Suppose that $y=0$. Since $u_n \in N$ and $y_n \weak 0$ we have for each $t>0$ that
    \begin{align*}
	\begin{split}
	    \ph(u_n)
	    & \geq \ph(t y_n)\\
	    & =\frac{1}{p}\|\nabla (t y_n)\|_p^p+\frac{1}{q}\|\nabla (t y_n)\|_{q,\mu}^q+\frac{1}{p}\| t y_n\|_p^p+\frac{1}{q}\|t y_n\|_{q,\mu}^q\\
	    & \quad -\into F(x,t y_n)\,dx-\int_{\partial\Omega} G(x,t y_n)\,d\sigma\\
	    & \geq \frac{1}{q} \|t y_n\|_{1,\mathcal{H}}^p-\into F(x,t y_n)\,dx-\int_{\partial\Omega} G(x,t y_n)\,d\sigma\to \frac{1}{q}t^p,
	\end{split}
    \end{align*}
    since $\| y_n\|_{1,\mathcal{H}}^p=1$ where we have used Propositions \ref{proposition_modular_properties2} and \ref{proposition_nodal_2}. Taking $t>0$ large enough we get a contradiction with \eqref{n1}. Hence, $y\neq 0$. Applying Proposition \ref{proposition_modular_properties2} we have
    \begin{align}\label{n14}
	\begin{split}
	    \ph(u_n)
	    & \leq \frac{1}{p}\|\nabla u_n\|_p^p+\frac{1}{q}\|\nabla u_n\|_{q,\mu}^q+\frac{1}{p}\| u_n\|_p^p+\frac{1}{q}\|u_n\|_{q,\mu}^q\\
	    & \quad -\into F(x,\|u_n\|_{1,\mathcal{H}} y_n)\,dx-\int_{\partial\Omega} G(x,\|u_n\|_{1,\mathcal{H}} y_n)\,d\sigma\\
	    & \leq \frac{1}{p} \|u_n\|_{1,\mathcal{H}}^q -\into F(x,\|u_n\|_{1,\mathcal{H}} y_n)\,dx-\int_{\partial\Omega} G(x,\|u_n\|_{1,\mathcal{H}} y_n)\,d\sigma.
	\end{split}
    \end{align}
    Dividing \eqref{n14} by $\|u_n\|_{1,\mathcal{H}}^q$, passing to the limit as $n\to \infty$ and applying (H)(ii), we obtain $\frac{\ph(u_n)}{\|u_n\|_{1,\mathcal{H}}^q}\to -\infty$ which contradicts $\ph(u_n) \geq 0$, see Proposition \ref{proposition_nodal_2} . This proves the coercivity of $\ph\big|_N$.
\end{proof}

Let $m=\inf\limits_N \ph$ and $m_0=\inf\limits_{N_0}  \ph$. First, we show that $m>0$.

\begin{proposition}\label{proposition_positive_infimum_1}
    Let hypotheses \eqref{condition_poincare} and (H) be satisfied. Then $m>0$.
\end{proposition}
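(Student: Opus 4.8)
The plan is to argue by contradiction. Suppose $m = \inf_N \ph = 0$. By Proposition \ref{proposition_nodal_2} we already know $\ph(u) \geq 0$ for every $u \in N$ (since $\ph(tu) \to -\infty$ as $t\to\infty$ while $1$ is the global minimizer of $t\mapsto\ph(tu)$, the value $\ph(u)=\ph(1\cdot u)$ is in fact $\geq \ph(tu)$ for small $t$, and one checks $\ph(u)>0$), so the assumption $m=0$ would produce a minimizing sequence $\{u_n\}_{n\geq 1}\subseteq N$ with $\ph(u_n)\to 0$.

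First I would show this sequence is bounded: since $\ph(u_n)\to 0$, in particular $\ph(u_n)\leq M_9$ for some constant, so Proposition \ref{proposition_coercive} (coercivity of $\ph|_N$) forces $\{u_n\}_{n\geq 1}$ to be bounded in $\Wp{\mathcal{H}}$. The key quantitative step is then to derive a \emph{positive lower bound} for $\|u_n\|_{1,\mathcal{H}}$, i.e.\ that elements of $N$ cannot be arbitrarily small in norm. This is where the membership $u_n\in N$ combined with the subcritical growth (H)(i) and the behavior (H)(iii) near zero enters: from $\lan\ph'(u_n),u_n\ran = 0$ we have
\begin{align*}
    \|\nabla u_n\|_p^p+\|\nabla u_n\|_{q,\mu}^q+\|u_n\|_p^p+\|u_n\|_{q,\mu}^q = \into f(x,u_n)u_n\,dx+\int_{\partial\Omega}g(x,u_n)u_n\,d\sigma.
\end{align*}
Using (H)(i) and (H)(iii) one estimates the right-hand side by $\eps\,\hat\rho_{\mathcal{H}}(u_n) + C_\eps\big(\|u_n\|_{1,\mathcal{H}}^{r_1}+\|u_n\|_{1,\mathcal{H}}^{r_2}\big)$ via the embeddings in Proposition \ref{proposition_embeddings}; choosing $\eps$ small and invoking Proposition \ref{proposition_modular_properties2}(iii)--(iv) to compare $\hat\rho_{\mathcal{H}}(u_n)$ with $\|u_n\|_{1,\mathcal{H}}^p$ or $\|u_n\|_{1,\mathcal{H}}^q$, and recalling $q<r_1,r_2$, forces $\|u_n\|_{1,\mathcal{H}}\geq \delta$ for some $\delta>0$ independent of $n$.

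Knowing $\delta\leq\|u_n\|_{1,\mathcal{H}}$ and $\{u_n\}$ bounded, I would pass to a subsequence with $u_n\weak u$ in $\Wp{\mathcal{H}}$ and $u_n\to u$ in $\Lp{r_1}$ and $\Lprand{r_2}$ (Proposition \ref{proposition_embeddings}(iii),(v)). Here I expect the main obstacle: one must rule out $u=0$ and then exploit $u\neq 0$ to get a contradiction with $m=0$. If $u=0$, the Nehari identity above, with the right-hand side tending to $0$ by strong convergence in $\Lp{r_1},\Lprand{r_2}$, forces $\hat\rho_{\mathcal{H}}(u_n)\to 0$, hence $\|u_n\|_{1,\mathcal{H}}\to 0$ by Proposition \ref{proposition_modular_properties2}(v), contradicting the lower bound $\delta$. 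So $u\neq 0$. Then by Proposition \ref{proposition_nodal_1} there is a unique $t_0>0$ with $t_0 u\in N$, and by Proposition \ref{proposition_nodal_2} together with weak lower semicontinuity of the modular part and strong convergence of the $F,G$ integrals, $\ph(t_0 u)\leq \liminf_n \ph(t_0 u_n)\leq \liminf_n \ph(u_n) = m = 0$. But $t_0 u\in N$ with $t_0 u\neq 0$, so by the near-zero estimate applied to the single element $t_0u$ (as in Proposition \ref{proposition_auxiliary_result}, using $\lan\ph'(t_0u),t_0u\ran=0$ to bound $\ph(t_0u)$ from below by a strictly positive quantity $\hat c\|t_0u\|^{q\text{ or }p}_{1,\mathcal{H}} - \tilde c_\eps\|t_0u\|^{r_1} - \hat c_\eps\|t_0u\|^{r_2}>0$ once $\|t_0u\|_{1,\mathcal{H}}$ is bounded away from $0$, which again follows from $t_0u\in N$), we get $\ph(t_0u)>0$, a contradiction. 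Hence $m>0$.
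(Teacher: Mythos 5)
Your overall strategy (contradiction, minimizing sequence in $N$, coercivity, passage to a weak limit) is a legitimate alternative to the paper's argument, and several intermediate steps are sound: the uniform lower bound $\|u\|_{1,\mathcal{H}}\geq\delta>0$ on $N$ obtained from the Nehari identity together with (H)(i), (iii) and Proposition \ref{proposition_modular_properties2} is correct, as is the exclusion of $u=0$ for the weak limit. The final step, however, contains a genuine error. You claim that
\begin{align*}
\hat{c}\,\|t_0u\|_{1,\mathcal{H}}^{q}-\tilde{c}_\eps\|t_0u\|_{1,\mathcal{H}}^{r_1}-\hat{c}_\eps\|t_0u\|_{1,\mathcal{H}}^{r_2}>0
\end{align*}
holds ``once $\|t_0u\|_{1,\mathcal{H}}$ is bounded away from $0$''. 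This is backwards: since $q<r_1,r_2$, the function $t\mapsto\hat{c}t^{q}-\tilde{c}_\eps t^{r_1}-\hat{c}_\eps t^{r_2}$ is positive only for $t$ \emph{small}, below some threshold $t^*$, and is negative for $t$ large; a lower bound $\|t_0u\|_{1,\mathcal{H}}\geq\delta$ therefore gives nothing, and there is no a priori reason why $\|t_0u\|_{1,\mathcal{H}}$ should lie below $t^*$. (There is also a small slip at the start: by Proposition \ref{proposition_nodal_2}, $t=1$ is the global \emph{maximizer}, not minimizer, of $t\mapsto\ph(tu)$ for $u\in N$.)

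The gap is repairable with facts you already invoke: for any $v\in N$ one has $\ph(v)\geq\ph(tv)$ for all $t>0$, and $\ph(tv)>0$ for $t>0$ small (this is exactly \eqref{n12} in the proof of Proposition \ref{proposition_nodal_2}, or equivalently Proposition \ref{proposition_auxiliary_result} used in the regime where the norm \emph{is} small); hence $\ph(v)>0$ pointwise on $N$, which applied to $v=t_0u$ contradicts $\ph(t_0u)\leq 0$. But once this observation is in place, the entire compactness apparatus becomes unnecessary: the paper simply fixes $\eta_0\in(0,1)$ with $\ph\geq\hat{\gamma}>0$ on the sphere $\|v\|_{1,\mathcal{H}}=\eta_0$ (Proposition \ref{proposition_auxiliary_result}) and, for arbitrary $u\in N$, scales $u$ onto that sphere by $s_u\|u\|_{1,\mathcal{H}}=\eta_0$, so that Proposition \ref{proposition_nodal_2} yields $\ph(u)\geq\ph(s_uu)\geq\hat{\gamma}$ uniformly, whence $m\geq\hat{\gamma}>0$ directly and without any contradiction argument.
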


\begin{proof}
    Recall the statement of Proposition \ref{proposition_auxiliary_result}, namely,
    \begin{align*}
	\ph(u) \geq
	\begin{cases}
	    \hat{c} \|u\|_{1,\mathcal{H}}^q - \tilde{c}_\eps \|u\|^{r_1}_{1,\mathcal{H}}- \hat{c}_\eps \|u\|_{1,\mathcal{H}}^{r_2} & \text{if } \|u\|_{1,\mathcal{H}} \leq 1,\\
	    \hat{c} \|u\|_{1,\mathcal{H}}^p - \tilde{c}_\eps \|u\|^{r_1}_{1,\mathcal{H}}- \hat{c}_\eps \|u\|_{1,\mathcal{H}}^{r_2} & \text{if } \|u\|_{1,\mathcal{H}}> 1.
	\end{cases}
    \end{align*}
    Since $p<q<r_1,r_2$ it follows that for some $\eta_0 \in (0,1)$ small enough
    \begin{align*}
	\ph(u) \geq \hat{\gamma}>0 \quad\text{for all }u \in \Wp{\mathcal{H}} \text{ with }\|u\|_{1,\mathcal{H}}=\eta_0.
    \end{align*}
    Now let $u \in N$ and take $s_u>0$ such that $s_u \|u\|_{1,\mathcal{H}}=\eta_0$. From Proposition \ref{proposition_nodal_2} we obtain
    \begin{align*}
	0<\hat{\gamma} \leq \ph(s_u u) \leq \ph(u) \quad\text{for all }u \in N,
    \end{align*}
    so $m>0$.
\end{proof}

As a direct consequence of Proposition \ref{proposition_positive_infimum_1} we obtain that $m_0>0$.

\begin{proposition}
    Let hypotheses \eqref{condition_poincare} and (H) be satisfied. Then $m_0>0$.
\end{proposition}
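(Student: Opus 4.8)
The plan is to exploit the orthogonal decomposition $u = u^+ - u^-$ together with the fact that the energy functional $\varphi$ splits additively over the positive and negative parts of its argument. First I would observe that for $u \in \Wp{\mathcal{H}}$ the functions $u^+$ and $u^-$, as well as $\nabla u^+$ and $\nabla u^-$, have disjoint supports, so that
\begin{align*}
    \|\nabla u\|_p^p = \|\nabla u^+\|_p^p + \|\nabla u^-\|_p^p, \qquad \|\nabla u\|_{q,\mu}^q = \|\nabla u^+\|_{q,\mu}^q + \|\nabla u^-\|_{q,\mu}^q,
\end{align*}
and analogously for $\|u\|_p^p$ and $\|u\|_{q,\mu}^q$. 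Moreover, since $f(x,0)=0$ and $g(x,0)=0$ (as recorded after hypotheses (H)), one has $F(x,0)=G(x,0)=0$ and hence, pointwise, $F(x,u) = F(x,u^+) + F(x,-u^-)$ (at every $x$ at most one of the two terms on the right is nonzero) and likewise for $G$. Putting these together yields the identity
\begin{align*}
    \varphi(u) = \varphi(u^+) + \varphi(-u^-) \quad\text{for all } u \in \Wp{\mathcal{H}}.
\end{align*}

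Next I would take an arbitrary $u \in N_0$. By the definition of $N_0$ we have $u^+ \in N$ and $-u^- \in N$; in particular both are nonzero, and Proposition \ref{proposition_positive_infimum_1} gives $\varphi(u^+) \geq m$ and $\varphi(-u^-) \geq m$. Combining this with the splitting identity above, we obtain $\varphi(u) \geq 2m$ for every $u \in N_0$, hence
\begin{align*}
    m_0 = \inf_{N_0} \varphi \geq 2m.
\end{align*}
Since $m>0$ by Proposition \ref{proposition_positive_infimum_1}, this gives $m_0 > 0$ (and in case $N_0 = \emptyset$ the inequality $m_0 > 0$ holds vacuously).

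There is no genuine obstacle here; the only point that needs a line of justification is the additive identity $\varphi(u) = \varphi(u^+) + \varphi(-u^-)$, which rests precisely on the disjointness of the supports of $u^\pm$ and of $\nabla u^\pm$ and on the vanishing of $f(x,\cdot)$ and $g(x,\cdot)$ at the origin. Everything else is an immediate application of Proposition \ref{proposition_positive_infimum_1}.
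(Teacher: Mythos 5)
Your proof is correct and follows exactly the paper's argument: decompose $\varphi(u)=\varphi(u^+)+\varphi(-u^-)$ for $u\in N_0$, note that $u^+,-u^-\in N$ by definition of $N_0$, and apply Proposition \ref{proposition_positive_infimum_1} to get $\varphi(u)\geq 2m>0$. The only difference is that you spell out the justification of the additive splitting (disjoint supports and $F(x,0)=G(x,0)=0$), which the paper takes for granted.
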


\begin{proof}
    Applying Proposition \ref{proposition_positive_infimum_1} and recall that $u^+, -u^- \in N$, we have for  each $u\in N_0$
    \begin{align*}
	\ph(u) = \ph(u^+)+\ph(-u^-) \geq 2m >0.
    \end{align*}
    Hence, $m_0>0$.
\end{proof}

\begin{proposition}\label{proposition_nodal_3}
    Let hypotheses \eqref{condition_poincare} and (H) be satisfied. Then there exists $y_0 \in N_0$ such that $\ph(y_0)=m_0$.
\end{proposition}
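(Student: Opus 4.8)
The plan is to take a minimizing sequence in $N_0$, extract a weak limit, and then---because neither $N$ nor $N_0$ is weakly closed---rescale the positive and negative parts of the limit back onto $N$ via Proposition~\ref{proposition_nodal_1}, using the energy monotonicity of Proposition~\ref{proposition_nodal_2} to ensure the energy does not increase. So let $\{u_n\}_{n\geq 1}\subseteq N_0$ satisfy $\ph(u_n)\to m_0$. Writing $u_n=u_n^+-u_n^-$ with $u_n^+\in N$ and $-u_n^-\in N$, we have $\ph(u_n)=\ph(u_n^+)+\ph(-u_n^-)$, and since $\ph\geq m>0$ on $N$ (Proposition~\ref{proposition_positive_infimum_1}), the sequences $\{\ph(u_n^+)\}$ and $\{\ph(-u_n^-)\}$ are bounded. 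By the coercivity of $\ph\big|_N$ (Proposition~\ref{proposition_coercive}), $\{u_n^+\}$ and $\{u_n^-\}$ are bounded in $\Wp{\mathcal{H}}$, hence so is $\{u_n\}$. Passing to a subsequence we may assume $u_n\weak y$ in $\Wp{\mathcal{H}}$ and $u_n\to y$ in $\Lp{r_1}$ and $\Lprand{r_2}$ (Proposition~\ref{proposition_embeddings}(iii),(v)); then $u_n^\pm\weak w_{1,2}:=y^\pm\geq 0$ in $\Wp{\mathcal{H}}$ and $u_n^\pm\to w_{1,2}$ in $\Lp{r_1}$, $\Lprand{r_2}$, and, after a further subsequence, $\ph(u_n^+)\to a$, $\ph(-u_n^-)\to b$ with $a+b=m_0$.

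Next I would rule out $w_1=0$ (and, by symmetry, $w_2=0$). If $w_1=0$, then $u_n^+\to 0$ in $\Lp{r_1}$, in $\Lprand{r_2}$, and hence in $L^1(\Omega)$ and $L^1(\partial\Omega)$, so by the growth condition (H)(i) one gets $\into f(x,u_n^+)u_n^+\,dx\to 0$ and $\intor g(x,u_n^+)u_n^+\,d\sigma\to 0$. But $u_n^+\in N$ means $\hat{\rho}_{\mathcal{H}}(u_n^+)=\into f(x,u_n^+)u_n^+\,dx+\intor g(x,u_n^+)u_n^+\,d\sigma$, so $\hat{\rho}_{\mathcal{H}}(u_n^+)\to 0$, whence $\|u_n^+\|_{1,\mathcal{H}}\to 0$ by Proposition~\ref{proposition_modular_properties2}(v) and therefore $\ph(u_n^+)\to\ph(0)=0$; this contradicts $\ph(u_n^+)\geq m>0$. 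Thus $w_1\neq 0$ and $w_2\neq 0$.

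Now apply Proposition~\ref{proposition_nodal_1} to obtain unique $t_1,t_2>0$ with $t_1w_1\in N$ and $-t_2w_2\in N$, and set $y_0:=t_1w_1-t_2w_2$. Since $w_1,w_2\geq 0$ have disjoint supports, $y_0^+=t_1w_1$ and $-y_0^-=-t_2w_2$, so $y_0\in N_0$. For the energy estimate, the first four terms of $\ph$ define a convex continuous, hence weakly lower semicontinuous, functional, while $\into F(x,t_1u_n^+)\,dx\to\into F(x,t_1w_1)\,dx$ and $\intor G(x,t_1u_n^+)\,d\sigma\to\intor G(x,t_1w_1)\,d\sigma$ by the compact embeddings of Proposition~\ref{proposition_embeddings} together with (H)(i); hence $\ph(t_1w_1)\leq\liminf_{n\to\infty}\ph(t_1u_n^+)$. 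Because $u_n^+\in N$, Proposition~\ref{proposition_nodal_2} gives $\ph(t_1u_n^+)\leq\ph(u_n^+)$ for every $n$, so $\ph(t_1w_1)\leq\liminf_{n\to\infty}\ph(u_n^+)=a$, and likewise $\ph(-t_2w_2)\leq b$. Adding, $\ph(y_0)=\ph(t_1w_1)+\ph(-t_2w_2)\leq a+b=m_0$; since $y_0\in N_0$ we also have $\ph(y_0)\geq m_0$, so $\ph(y_0)=m_0$.

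The main obstacle is precisely that $N$ and $N_0$ are not weakly closed, so the weak limits $w_1,w_2$ need not lie on $N$; the remedy is the unique fiber projection of Proposition~\ref{proposition_nodal_1} combined with the crucial inequality $\ph(tu)\leq\ph(u)$ for $u\in N$, $t>0$, of Proposition~\ref{proposition_nodal_2}, which guarantees this projection cannot raise the energy. The other delicate point is the non-triviality $w_1,w_2\neq 0$---without it $y_0$ could fail to belong to $N_0$---and this is exactly where the strict lower bound $\ph\big|_N\geq m>0$ from Proposition~\ref{proposition_positive_infimum_1} is used.
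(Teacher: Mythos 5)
Your proposal is correct and follows essentially the same route as the paper: minimizing sequence in $N_0$, boundedness of the positive and negative parts via coercivity of $\ph\big|_N$, exclusion of trivial weak limits through the Nehari constraint together with $\ph\big|_N\geq m>0$, the fiber projection of Proposition \ref{proposition_nodal_1}, and the energy estimate combining weak lower semicontinuity with $\ph(tu)\leq\ph(u)$ from Proposition \ref{proposition_nodal_2}. The only cosmetic difference is that you make the boundedness step explicit (each of $\ph(u_n^+)$, $\ph(-u_n^-)$ is trapped between $m$ and $m_0+o(1)$ before invoking coercivity), where the paper simply refers back to the argument of Proposition \ref{proposition_coercive}.
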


\begin{proof}
    Let $\{y_n\}_{n \geq 1} \subseteq N_0$ be a minimizing sequence, that is,
    \begin{align*}
	\ph(y_n) \searrow m_0.
    \end{align*}
    Clearly,
    \begin{align*}
	\ph(y_n)=\ph(y_n^+)+\ph(-y_n^-)
    \end{align*}
    with $y_n^+, -y_n^- \in N$. Similar to the proof of Proposition \ref{proposition_coercive} we can show that the sequences $\{y_n^+\}_{n \geq 1}, \{y_n^-\}_{n \geq 1}  \subseteq \Wp{\mathcal{H}}$ are bounded. Therefore, we may assume that
    \begin{align}\label{n16}
	\begin{split}
	    &y_n^+ \weak v_1 \quad\text{in } \Wp{\mathcal{H}},\quad v_1 \geq 0,\\
	    &y_n^- \weak v_2 \quad\text{in } \Wp{\mathcal{H}},\quad v_2 \geq 0.
	\end{split}
    \end{align}
    Suppose that $v_1=0$. Then, since $y_n^+ \in N$, it holds
    \begin{align*}
	0 = \l\lan \ph'(y_n^+),y_n^+\r\ran
	= \hat{\rho}_{\mathcal{H}}(y_n^+)
	-\into f(x,y_n^+)y_n^+\,dx-\int_{\partial\Omega} g(x,y_n^+)y_n^+\,d\sigma
    \end{align*}
    for all $n \in \N$. From \eqref{n16} and Proposition \ref{proposition_modular_properties} we conclude that
    \begin{align*}
	y_n^+ \to 0 \quad\text{in }\Wp{\mathcal{H}}.
    \end{align*}
    Hence
    \begin{align*}
	0<m\leq \ph(y_n^+) \to \ph(0)=0 \quad\text{as }n\to +\infty,
    \end{align*}
    which is a contradiction. Thus, $v_1\neq 0$. In a similar way we can show that $v_2\neq 0$. Taking Proposition \ref{proposition_nodal_1} into account there exists $t_1,t_2>0$ such that
    \begin{align*}
	t_1 v_1 \in N \quad\text{and}\quad t_2 v_2 \in N.
    \end{align*}
    Setting $y_0=t_1v_1-t_2v_2=y_0^+-y_0^-$ gives $y_0\in N_0$. Applying the sequentially weakly lower semicontinuity of $\ph$, Proposition \ref{proposition_nodal_2} and the fact that $y_0 \in N_0$ we obtain
    \begin{align*}
	m_0
	&= \lim_{n\to+\infty} \ph(y_n)\\
	& = \lim_{n\to+\infty} \left(\ph(y_n^+)+\ph(-y_n^-)\right)\\
	& \geq \liminf_{n\to+\infty} \left(\ph(t_1y_n^+)+\ph(-t_2y_n^-)\right)\\
	& \geq \ph(t_1v_1)+\ph(-t_2 v_2)\\
	& \geq \ph(y_0)\\
	& \geq m_0.
    \end{align*}
    Therefore
    \begin{align*}
	\ph(y_0)=m_0
    \end{align*}
    with $y_0 \in N_0$.
\end{proof}

\begin{proposition}\label{proposition_critical_point}
    Let hypotheses \eqref{condition_poincare} and (H) be satisfied. 
    Let $y_0 \in N_0$ be such that $\ph(y_0)=m_0$. Then $y_0 \in K_\ph$. 
    In particular $y_0\in \Wp{\mathcal{H}}\cap \Linf$ is a solution of problem \eqref{problem}.
\end{proposition}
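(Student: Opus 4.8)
The statement is the classical assertion that a minimiser of $\ph$ over the nodal Nehari set $N_0$ is a free critical point; I would follow the scheme of Liu--Wang--Wang \cite{Liu-Wang-Wang-2004} and Gasi\'nski--Papageorgiou \cite{Gasinski-Papageorgiou-2019}, combining the fibering structure coming from $y_0^{\pm}$ with a quantitative deformation lemma and a Brouwer degree argument. The first step I would carry out is to record the elementary fact that, since $\nabla y_0^+$ and $\nabla y_0^-$ have disjoint supports and $F(x,0)=G(x,0)=0$, for all $s,t>0$ one has $\ph(sy_0^+-ty_0^-)=\ph(sy_0^+)+\ph\bigl(t(-y_0^-)\bigr)$. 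Because $y_0^+\in N$ and $-y_0^-\in N$, Proposition \ref{proposition_nodal_2} gives $\ph(sy_0^+)\le\ph(y_0^+)$ and $\ph(t(-y_0^-))\le\ph(-y_0^-)$, with strict inequality whenever $s\ne1$, resp.\ whenever $t\ne1$. Hence, writing $\gamma(s,t):=sy_0^+-ty_0^-$, we obtain $\ph(\gamma(s,t))\le\ph(y_0)=m_0$ on $(0,\infty)^2$, with equality exactly at $(s,t)=(1,1)$.

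Then I would argue by contradiction, assuming $y_0\notin K_\ph$, i.e.\ $\ph'(y_0)\ne0$. By continuity of $\ph'$ there are $\delta_0,\rho_0>0$ with $\|\ph'(u)\|_*\ge\delta_0$ for all $u$ with $\|u-y_0\|_{1,\mathcal{H}}\le\rho_0$; shrinking $\rho_0$ and using $y_0^{\pm}\ne0$ together with the continuity of $u\mapsto u^{\pm}$, I may also assume $u^+\ne0$ and $u^-\ne0$ for all such $u$. Choose a small square $D=[1-\eps_0,1+\eps_0]^2$ with $\eps_0$ so small that $\gamma(D)\subseteq B_{\rho_0/2}(y_0)$; by the first step there is $\kappa>0$ with $\ph(\gamma(s,t))\le m_0-2\kappa$ on $\partial D$. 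Now apply the quantitative deformation lemma to $\ph$ at level $m_0$ on $S=\overline{B_{\rho_0/2}(y_0)}$, with parameters $\delta=\rho_0/4$ and $\eps\in(0,\kappa)$ chosen so small that $8\eps/\delta\le\delta_0$: this produces a homeomorphism $\eta_*$ of $\Wp{\mathcal{H}}$ with $\ph\circ\eta_*\le\ph$, with $\eta_*=\mathrm{id}$ off $\ph^{-1}([m_0-2\eps,m_0+2\eps])\cap\overline{B_{\rho_0}(y_0)}$, with $\|\eta_*(u)-u\|_{1,\mathcal{H}}\le\delta$ for all $u$, and with $\eta_*(u)\in\ph^{m_0-\eps}$ whenever $u\in S$ and $\ph(u)\le m_0+\eps$. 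Setting $\tilde\gamma:=\eta_*\circ\gamma$, one gets $\tilde\gamma=\gamma$ on $\partial D$ (there $\ph\circ\gamma\le m_0-2\kappa<m_0-2\eps$), $\ph(\tilde\gamma(s,t))\le m_0-\eps<m_0$ on all of $D$ (since $\gamma(D)\subseteq S$ and $\ph\circ\gamma\le m_0+\eps$ there), and $\|\tilde\gamma(s,t)-y_0\|_{1,\mathcal{H}}<\rho_0$ on $D$, whence $\tilde\gamma(s,t)^+\ne0$ and $-\tilde\gamma(s,t)^-\ne0$ there.

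It then remains to exhibit a point of $N_0$ inside $\tilde\gamma(D)$, which will contradict $\ph\ge m_0$ on $N_0$. Define $\Psi\colon D\to\R^2$ by $\Psi(s,t)=\bigl(\langle\ph'(\tilde\gamma(s,t)^+),\tilde\gamma(s,t)^+\rangle,\,\langle\ph'(-\tilde\gamma(s,t)^-),-\tilde\gamma(s,t)^-\rangle\bigr)$. On $\partial D$ one has $\tilde\gamma=\gamma$, hence $\Psi(s,t)=\bigl(s\,\zeta_{y_0^+}(s),\,t\,\zeta_{-y_0^-}(t)\bigr)$, where $\zeta_u$ is the function introduced in the proof of Proposition \ref{proposition_nodal_1}; since $y_0^+,-y_0^-\in N$, the monotonicity analysis there shows $\zeta_{y_0^+}$ and $\zeta_{-y_0^-}$ are positive on $(0,1)$ and negative on $(1,\infty)$. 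Consequently $\Psi$ does not vanish on $\partial D$ and is homotopic there, through maps avoiding $0$, to $(s,t)\mapsto(1-s,1-t)$, so $\deg(\Psi,D,0)=1$; thus $\Psi(s_*,t_*)=0$ for some $(s_*,t_*)\in D$, and together with $\tilde\gamma(s_*,t_*)^{\pm}\ne0$ this gives $\tilde\gamma(s_*,t_*)^+\in N$, $-\tilde\gamma(s_*,t_*)^-\in N$, i.e.\ $\tilde\gamma(s_*,t_*)\in N_0$, while $\ph(\tilde\gamma(s_*,t_*))\le m_0-\eps<m_0$ — a contradiction. Hence $y_0\in K_\ph$, so $\ph'(y_0)=0$, which is exactly the statement that $y_0$ is a weak solution of \eqref{problem}; since $y_0^+\ne0$ and $y_0^-\ne0$ it is sign changing, and Theorem \ref{theorem_apriori}, applied with $h_1(x,s)=f(x,s)-|s|^{p-2}s-\mu(x)|s|^{q-2}s$ and $h_2=g$ (whose growth hypotheses hold because $p-1<q-1<r_1-1$ and $q-1<r_2-1$), yields $y_0\in\Linf$.

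The delicate point is the middle step: the deformation $\eta_*$ must be localised in a thin strip about the level $m_0$ and in a small ball about $y_0$, so that it leaves $\gamma$ unchanged on $\partial D$ — which is what keeps $\deg(\Psi,D,0)$ computable — while still strictly lowering $\ph$ on the interior of $D$ and staying $C^0$-small enough that the truncations $\tilde\gamma(s,t)^{\pm}$ do not degenerate. The sign and monotonicity properties of $\zeta_u$ established in Proposition \ref{proposition_nodal_1} are precisely what make the boundary degree equal to $1$.
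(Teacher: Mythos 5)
Your proof is correct and follows essentially the same route as the paper: the fibering inequality $\ph(sy_0^+-ty_0^-)<m_0$ for $(s,t)\ne(1,1)$, a contradiction argument via the quantitative deformation lemma of Willem, and a Brouwer degree computation (degree $1$ from the sign properties of $\zeta_{y_0^+}$, $\zeta_{-y_0^-}$) producing a point of $N_0$ strictly below the level $m_0$. Your additional care in shrinking $D$ so that the deformed truncations $\tilde\gamma(s,t)^{\pm}$ stay nonzero is a welcome refinement of a point the paper leaves implicit, but the argument is the same.
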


\begin{proof}
    The proof of this proposition follows the idea of the proof of Theorem 1.4 in Liu-Dai \cite{Liu-Dai-2018} and exploits the quantitative deformation lemma of Willem, see Jabri \cite[Theorem 4.2]{Jabri-2003}.

    From hypothesis (H)(v), Proposition \ref{proposition_nodal_2} and the definition of $N_0$, for $s,t>0$ such that at least one of $s,t\ne 1$, we have
    \begin{align}\label{eq_l_s1}
	\ph\l(sy_0^+-ty_0^-\r)=\ph\l(sy_0^+\r)+\ph\l(-ty_0^-\r)<\ph\l(y_0^+\r)+\ph\l(-y_0^-\r)=\ph\l(y_0\r)=m_0.
    \end{align}

    Now we proceed by contradiction. So suppose that $\ph'(y_0)\ne 0$.  Then there exist $\delta>0$ and $\rho>0$ such that
    \begin{align*}
	\l\|\ph'(v)\r\|_{1,\mathcal{H}}\ge\rho\quad
	\text{for all } v\in W^{1,\mathcal{H}}(\Omega) \text{ with } \ \|v-y_0\|_{1,\mathcal{H}}\le 3\delta.
    \end{align*}
    Let
    \begin{align*}
	D=\l[\frac{1}{2},\frac{3}{2}\r]\times \l[\frac{1}{2},\frac{3}{2}\r].
    \end{align*}
    From \eqref{eq_l_s1}, we see that
    \begin{align*}
	\ph\l(sy_0^+-ty_0^-\r)=m_0\quad\text{if and only if}\quad s=t=1.
    \end{align*}
    Thus
    \begin{align*}
	\beta=\max_{(s,t)\in\partial D}\ph\l(sy_0^+ - t y_0^-\r)<m_0.
    \end{align*}
    Let
    \begin{align*}
	\eps=\min\l\{\frac{m_0-\beta}{4},\frac{\rho\delta}{8}\r\}.
    \end{align*}
    By the quantitative deformation lemma of Willem, see Jabri \cite[Theorem 4.2]{Jabri-2003}, there exists a continuous deformation $\eta\colon [0,1]\times W^{1,\mathcal{H}}(\Omega)\to W^{1,\mathcal{H}}(\Omega)$ such that
    \begin{enumerate}
	\item[(i)]
	    $\eta(1,v)=v$ if $v\not\in \ph^{-1}([m_0-2\eps,m_0+2\eps])$;
	\item[(ii)]
	    $\ph(\eta(1,v))\le m_0-\eps$ for all $v\in W^{1,\mathcal{H}}(\Omega)$ with $\|v-y_0\|_{1,\mathcal{H}}\le\delta$ and $\ph(v)\le m_0+\eps$;
	\item[(iii)]
	    $\ph(\eta(1,v))\le \ph(v)$ for all $v\in W^{1,\mathcal{H}}(\Omega)$.
    \end{enumerate}

    It follows easily that
    \begin{align}\label{eq_l_s2}
	\max_{(s,t)\in D}\ph\l(\eta(1,sy_0^+- ty_0^-)\r)<m_0.
    \end{align}
    Let us now define $h\colon\R_+\times\R_+\to W^{1,\mathcal{H}}(\Omega)$ by
    \begin{align*}
	h(s,t)=\eta\l(1,sy_0^+-ty_0^-\r)
    \end{align*}
    and put
    \begin{align*}
	H_0(s,t) & =  \Big(\langle\ph'(sy_0^+),y_0^+\rangle,\ \langle\ph'(-ty_0^-),-y_0^-\rangle\Big),\\
	H_1(s,t) & =  \left(\frac{1}{s}\l\langle\ph'(h^+(s,t)),h^+(s,t)\r\rangle,\ \frac{1}{t}\l\langle\ph'(-h^-(s,t)),-h^-(s,t)\r\rangle\r).
    \end{align*}
    Note that $\deg(H_0,D,0)=1$, as
    \begin{align*}
	\l\langle\ph'(sy_0^+),y_0^+\r\rangle &>0 \quad \text{and}\quad \l\langle\ph'(-sy_0^-),-y_0^-\r\rangle>0\quad \text{for all }s\in (0,1),\\
	\l\langle\ph'(sy_0^+),y_0^+\r\rangle & <0\quad \text{and}\quad \l\langle\ph'(-sy_0^-),-y_0^-\r\rangle<0\quad\text{for all }s>1.
    \end{align*}
    By \eqref{eq_l_s2} and property (i) of $\eta$ (see the choice of $\eps>0$), we have that
    \begin{align*}
	h(s,t)=s y_0^+-ty_0^-\quad\text{for all }(s,t)\in\partial D.
    \end{align*}
    Thus $H_0=H_1$ on $\partial D$ and hence
    \begin{align*}
	\deg(H_1,D,0)=\deg(H_0,D,0)=1.
    \end{align*}
    By the existence property of the Brouwer degree (see, for example, Gasi\'nski-Papageorgiou \cite[Theorem 4.11]{Gasinski-Papageorgiou-2016} or Papageorgiou-Winkert \cite[Theorem 6.2.22]{Papageorgiou-Winkert-2018}), we get
    \begin{align*}
	H_1(s,t)=0\quad\textrm{for some}\ (s,t)\in D.
    \end{align*}
    This means that
    \begin{align*}
	\eta\l(1,sy_0^+-ty_0^-\r)=h(s,t)\in N_0\quad\textrm{for some}\ (s,t)\in D.
    \end{align*}
    But this contradicts \eqref{eq_l_s2} and the definition of $m_0$.

    So, we conclude that $y_0\in K_{\ph}$ and thus $y_0$ is a solution of problem \eqref{problem}. From Proposition \ref{theorem_apriori} we have that $y_0 \in \Linf$.
\end{proof}

\begin{proposition}\label{proposition_two_nadal_domain}
    Let hypotheses \eqref{condition_poincare} and (H) be satisfied. If $y_0\in N_0$, is as in Proposition \ref{proposition_critical_point}, then $y_0$ is a nodal solution of problem \eqref{problem} which has exactly two nodal domains.
\end{proposition}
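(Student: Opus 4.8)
The plan is to argue by contradiction, using the minimality of $y_0$ on $N_0$ together with the fact that, by Proposition~\ref{proposition_critical_point}, $y_0$ is a genuine critical point of $\ph$ (so $\lan\ph'(y_0),v\ran=0$ for every $v\in\Wp{\mathcal{H}}$) and $y_0\in\Linf$. First I would record that $y_0$ changes sign: since $y_0\in N_0$ we have $y_0^+\in N$ and $-y_0^-\in N$, so $y_0^\pm\ne 0$. Consequently the open set $\{x\in\Omega:y_0(x)\ne 0\}$ has at least one connected component on which $y_0>0$ and at least one on which $y_0<0$, i.e.\ $y_0$ has at least two nodal domains. It therefore suffices to rule out three or more.

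So suppose $y_0$ has a third nodal domain. I would choose a nodal domain $\Omega_1$ with $y_0>0$ on $\Omega_1$ and a nodal domain $\Omega_2$ with $y_0<0$ on $\Omega_2$, and set
\[
    v:=y_0\l(\chi_{\Omega_1}+\chi_{\Omega_2}\r),\qquad w:=y_0-v,
\]
so that $w\ne 0$ because of the third nodal domain. Since each $\Omega_i$ is a nodal domain, $y_0\chi_{\Omega_1}$, $y_0\chi_{\Omega_2}$ and $w$ belong to $\Wp{\mathcal{H}}$ with gradients $\chi_{\Omega_1}\nabla y_0$, $\chi_{\Omega_2}\nabla y_0$ and $\chi_{\Omega\setminus(\Omega_1\cup\Omega_2)}\nabla y_0$ respectively; this follows from the embedding $\Wp{\mathcal{H}}\hookrightarrow\Wp{p}$ (Proposition~\ref{proposition_embeddings}(i)), the corresponding property in $\Wp{p}$, and the monotonicity of $\rho_{\mathcal{H}}$. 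Since the integrands in $\ph$ and in $v\mapsto\lan\ph'(v),v\ran$ act pointwise and $f(x,0)=g(x,0)=0$, testing $\ph'(y_0)=0$ successively against $y_0\chi_{\Omega_1}$, $y_0\chi_{\Omega_2}$ and $w$ yields
\[
    \lan\ph'\l(y_0\chi_{\Omega_1}\r),y_0\chi_{\Omega_1}\ran=\lan\ph'\l(y_0\chi_{\Omega_2}\r),y_0\chi_{\Omega_2}\ran=\lan\ph'(w),w\ran=0 .
\]
Since $v^+=y_0\chi_{\Omega_1}\ne 0$ and $-v^-=y_0\chi_{\Omega_2}\ne 0$ both lie in $N$, we obtain $v\in N_0$, and since $w\ne 0$ we obtain $w\in N$.

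By the same pointwise additivity — now for the functional $\ph$ itself, using $F(x,0)=G(x,0)=0$ — we have $\ph(y_0)=\ph(v)+\ph(w)$. Because $w\in N$, Proposition~\ref{proposition_positive_infimum_1} gives $\ph(w)\ge m>0$, so $\ph(v)=\ph(y_0)-\ph(w)=m_0-\ph(w)<m_0$. This contradicts $v\in N_0$, which forces $\ph(v)\ge m_0$. Hence $y_0$ has at most two nodal domains, so exactly two; being a sign-changing solution of \eqref{problem} by Proposition~\ref{proposition_critical_point}, $y_0$ is a nodal solution with exactly two nodal domains.

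The step I expect to be the main obstacle is making the two ``locality'' claims precise: that the restrictions $y_0\chi_{\Omega_i}$ (and $w$) actually belong to $\Wp{\mathcal{H}}$ with the expected gradients, and that both $\ph$ and $v\mapsto\lan\ph'(v),v\ran$ split as the sum over the mutually disjointly supported summands. In the interior this is routine once one has $y_0\chi_{\Omega_i}\in\Wp{p}$ with the expected gradient; the real care is needed for the boundary integrals $\int_{\partial\Omega}G(x,\cdot)\,d\sigma$ and $\int_{\partial\Omega}g(x,\cdot)(\cdot)\,d\sigma$, where one uses that $y_0\in\Linf$ (Proposition~\ref{proposition_critical_point}) so that the traces of the disjointly supported summands have products vanishing $\sigma$-almost everywhere on $\partial\Omega$.
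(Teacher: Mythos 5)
Your argument is correct and reaches the same contradiction structure (a decomposition of $y_0$ into a piece in $N_0$ plus a nontrivial leftover), but it closes the argument by a genuinely different mechanism than the paper. The paper isolates the extra nodal component $y_3$, notes $\hat y=y_0-y_3\in N_0$ so $\ph(\hat y)\ge m_0$, and then controls the contribution of $y_3$ through the identity $\ph(y_3)-\frac{1}{q}\lan\ph'(y_3),y_3\ran \ge \l(\frac1p-\frac1q\r)\l(\|\nabla y_3\|_p^p+\|y_3\|_p^p\r)$, which relies on hypothesis (H)(iv) (namely $f(x,s)s-qF(x,s)\ge 0$ and likewise for $g$); this forces $y_3=0$. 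You instead observe that the leftover $w$ satisfies $\lan\ph'(w),w\ran=\lan\ph'(y_0),w\ran=0$ and $w\ne 0$, hence $w\in N$, so Proposition \ref{proposition_positive_infimum_1} gives $\ph(w)\ge m>0$, and additivity $\ph(y_0)=\ph(v)+\ph(w)\ge m_0+m$ is an immediate contradiction. Your route is more economical (it does not invoke (H)(iv) at this stage and handles any number of extra components at once), at the price of leaning on the global fact $m>0$; the paper's route is more "local'' and would survive even if one only knew $m\ge 0$. Both proofs rest on exactly the same unproven locality facts --- that $y_0\chi_{\Omega_k}\in\Wp{\mathcal{H}}$ with gradient $\chi_{\Omega_k}\nabla y_0$, that $\ph$ and $u\mapsto\lan\ph'(u),u\ran$ split over disjointly supported summands including the boundary terms, and indeed on what a "nodal domain'' of a merely bounded Sobolev function means --- so the care you flag at the end is needed equally by the published argument and is not a defect specific to your proof.
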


\begin{proof}
    From the definition of $N_0$ and Proposition \ref{proposition_critical_point} it is clear that $y_0\in N_0$ is a sign changing solution. It remains to show that $y_0$ has exactly two nodal domains. Arguing by contradiction, suppose that there exist disjoint open sets $\Omega_1$, $\Omega_2$ and $\Omega_3$ on which $y_0$ has fixed sign. Without any loss of generality, we may assume that $y_0$ has only three nodal domains. Let
    \begin{align*}
	y_k(x)=
	\begin{cases}
	    y_0(x) & \text{if }  x\in\Omega_k,\\
	    0      & \text{if }  x\in\Omega\setminus\Omega_k
	\end{cases}
    \end{align*}
    for $k=1,2,3$, $x\in\Omega$. Without any loss of generality, we may assume that
    \begin{align*}
	y_1\big |_{\Omega_1}>0,\quad y_2\big|_{\Omega_2}<0,\quad y_3\big|_{\Omega_3}<0.
    \end{align*}
    Setting $\hat{y}=y_1+y_2$, we have $\hat{y}^+=y_1$ and $\hat{y}^-=-y_2$. Since $y_0=y_1+y_2+y_3=\hat{y}+y_3$ and $\ph'(y_0)=0$ because of Proposition \ref{proposition_critical_point} we have
    \begin{align*}
	0=\l\lan \ph'(y_0),\hat{y}^+\r\ran
	=\l\lan \ph'(\hat{y})+\ph'(y_3),\hat{y}^+\r\ran
	=\l\lan \ph'(\hat{y}),\hat{y}^+\r\ran.
    \end{align*}
    Therefore $\l\lan \ph'(\hat{y}),\hat{y}^+\r\ran=0$. In the same way we can show that $\l\lan \ph'(\hat{y}),\hat{y}^-\r\ran=0$. From this we see that $\hat{y}^+,-\hat{y}^- \in N$ and so $\hat{y}\in N_0$.

    Applying Proposition \ref{proposition_nodal_3} and hypothesis (H)(iv) gives
    \begin{align*}
	m_0
	& = \ph(y_0)
	= \ph(y_0)-\frac{1}{q}\big\langle \ph'(y_0),y_0\big\rangle\\
	& = \ph(\hat{y})+\ph(y_3) -\frac{1}{q}\Big(\big\langle \ph'(\hat{y}),\hat{y}\big\rangle+\big\langle \ph'(y_3),y_3\big\rangle\Big)\\
	& = \ph(\hat{y})+\ph(y_3)-\frac{1}{q}\big\langle \ph'(y_3),y_3\big\rangle\\
	& = \ph(\hat{y})+\l(\frac{1}{p}-\frac{1}{q}\r)\|\nabla y_3\|_p^p+\l(\frac{1}{p}-\frac{1}{q}\r)\|y_3\|_p^p\\
	& \quad +\into \l(\frac{1}{q}f(x,y_3)y_3-F(x,y_3)\r)\,dx  +\int_{\partial\Omega} \l(\frac{1}{q}g(x,y_3)y_3-G(x,y_3)\r)\,d\sigma\\
	& \geq m_0+\l(\frac{1}{p}-\frac{1}{q}\r)\|\nabla y_3\|_p^p+\l(\frac{1}{p}-\frac{1}{q}\r)\|y_3\|_p^p.
    \end{align*}
    Since $p>q$, we see that $\Omega_3=\emptyset$. Thus we conclude that $y_0$ has only two nodal domains.
\end{proof}

Finally we can state the following multiplicity theorem for problem \eqref{problem} summarizing the results from Propositions \ref{proposition_constant_sign_solutions} and \ref{proposition_two_nadal_domain}.

\begin{theorem}\label{theorem_main_result}
    Let hypotheses \eqref{condition_poincare} and (H) be satisfied. Then, problem \eqref{problem} has at least three nontrivial solutions $u_0,v_0,y_0\in W^{1,\mathcal{H}}(\Omega)\cap \Linf$ such that
    \begin{align*}
	u_0 \geq 0,\quad v_0\leq 0,\quad y_0 \ \text{is nodal with two nodal domains}.
    \end{align*}
\end{theorem}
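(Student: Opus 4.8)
The plan is essentially one of bookkeeping: the theorem assembles the solutions constructed in Sections \ref{section_4} and \ref{section_5}, so the proof only has to cite the relevant propositions and check that the three solutions obtained are pairwise distinct.

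First I would apply Proposition \ref{proposition_constant_sign_solutions} to obtain two nontrivial constant sign solutions $u_0, v_0 \in \Wp{\mathcal{H}} \cap \Linf$ of \eqref{problem} with $u_0 \geq 0$ and $v_0 \leq 0$ almost everywhere in $\Omega$. Since $u_0 \neq 0$ and $v_0 \neq 0$ have opposite signs, these two solutions are distinct. Next, to produce the nodal solution, I would invoke Proposition \ref{proposition_nodal_3}, which yields $y_0 \in N_0$ with $\ph(y_0) = m_0$, followed by Proposition \ref{proposition_critical_point}, which shows that this minimizer satisfies $y_0 \in K_\ph$ and hence is a weak solution of \eqref{problem} lying in $\Linf$. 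Finally, Proposition \ref{proposition_two_nadal_domain} guarantees that $y_0$ changes sign and has exactly two nodal domains.

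It remains only to note that $y_0$ differs from both $u_0$ and $v_0$: by Proposition \ref{proposition_two_nadal_domain} the function $y_0$ is not of constant sign, whereas $u_0 \geq 0$ and $v_0 \leq 0$, so the three solutions are pairwise distinct and all nontrivial, which is exactly the assertion of the theorem. There is no real obstacle in this step; all the substantive work—verifying the Cerami condition for $\ph_\pm$ (Proposition \ref{proposition_cerami}), the mountain pass geometry (Propositions \ref{proposition_local_minimizers} and \ref{proposition_unbounded_below}), the coercivity of $\ph|_N$ and the minimization over $N_0$ (Propositions \ref{proposition_coercive}--\ref{proposition_nodal_3}), the deformation/Brouwer degree argument (Proposition \ref{proposition_critical_point}), and the $L^\infty$ a priori bound (Theorem \ref{theorem_apriori})—has already been carried out, so the theorem follows immediately by combining Propositions \ref{proposition_constant_sign_solutions} and \ref{proposition_two_nadal_domain}.
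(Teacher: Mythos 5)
Your proposal is correct and matches the paper exactly: the theorem is stated there as a direct summary of Propositions \ref{proposition_constant_sign_solutions} and \ref{proposition_two_nadal_domain}, with no further argument needed. Your additional remark that the three solutions are pairwise distinct (since $y_0$ changes sign while $u_0\geq 0$ and $v_0\leq 0$) is a harmless and sensible bit of bookkeeping the paper leaves implicit.
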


\section*{Acknowledgment}

The authors wish to thank the knowledgeable referee for his/her remarks in order to improve the paper as well as for the indication of further possible investigations.

\end{document}